\documentclass[oneside, 10pt]{amsart}

\usepackage[letterpaper,left=1.2in,right=1.2in,top=1in,bottom=1in]{geometry}




\usepackage[english]{babel}

\usepackage{amsmath, amssymb, amsthm}
\usepackage{amsrefs}

\usepackage[all]{xy}

\usepackage{hyperref}
\hypersetup{colorlinks=true,linkcolor=blue,anchorcolor=blue,citecolor=blue}



\newtheorem{theoremintro}{Theorem}
\newtheorem{questionintro}[theoremintro]{Question}

\newtheorem{proposition}{Proposition}
\newtheorem{theorem}[proposition]{Theorem}
\newtheorem{lemma}[proposition]{Lemma}

\newtheorem{corollary}[proposition]{Corollary}

\newcommand{\topo}{{\mathrm{top}}}
\newcommand{\hofib}{\operatorname{hofib}}
\newcommand{\GL}{\operatorname{GL}}
\newcommand{\SL}{\operatorname{SL}}
\newcommand{\Gm}{\mathbb{G}_m}
\newcommand{\PGL}{\operatorname{PGL}}

\newcommand{\Hoh}{\mathrm{H}}
\newcommand{\Eoh}{\mathrm{E}}
\newcommand{\Br}{\operatorname{Br}}
\newcommand{\ind}{\operatorname{ind}}
\newcommand{\per}{\operatorname{per}}
\providecommand{\deg}{\operatorname{deg}}

\newcommand{\iso}{\cong}
\newcommand{\ZZ}{\mathbb{Z}}
\newcommand{\tensor}{\otimes}
\newcommand{\id}{\mathrm{id}}

\newcommand{\End}{\operatorname{End}}

\newcommand{\Spec}{\operatorname{Spec}}
\newcommand{\Mat}{\operatorname{Mat}}
\newcommand{\et}{\textit{\'et}}
\newcommand{\fpqc}{\textit{fpqc}}
\newcommand{\B}{\mathrm{B}}

\newcommand{\NN}{\mathbb{N}}
\newcommand{\QQ}{\mathbb{Q}}
\newcommand{\CC}{\mathbb{C}}
\newcommand{\RR}{\mathbb{R}}
\newcommand{\tors}{\mathrm{tors}}

\newcommand{\Sq}{\mathrm{Sq}}

\newcommand{\Sp}{\mathrm{Sp}}
\newcommand{\PSp}{\mathrm{PSp}}
\newcommand{\PGSp}{\mathrm{PGSp}}
\newcommand{\Og}{\mathrm{O}}
\newcommand{\PO}{\mathrm{PO}}
\newcommand{\SOg}{\mathrm{SO}}
\newcommand{\SU}{\operatorname{SU}}
\newcommand{\PGO}{\mathrm{PGO}}

\newcommand{\extended}[1]{{\color{ex}}}



\newcommand{\remove}[1]{}

\newcommand{\tr}{{\mathrm{t}}}
\newcommand{\syp}{{\mathrm{sp}}}
\newcommand{\trdeg}{\mathrm{trdeg}}

\newcommand{\smallSMatII}[4]{\left[\begin{smallmatrix} {#1} & {#2} \\ {#3} &
{#4} \end{smallmatrix}\right]}

\newcommand{\calL}{{\mathcal{L}}}

\newcommand{\calO}{{\mathcal{O}}}
\newcommand{\calP}{{\mathcal{P}}}


\newcommand{\etrm}{\mathrm{\acute{e}t}}

\newcommand{\exterior}{{\textstyle \bigwedge}}
\newcommand{\EEnd}{\mathcal{E}\!\mathit{nd}}
\newcommand{\HHom}{\mathcal{H}\!\mathit{om}}
\newcommand{\Pic}{\mathrm{Pic}}
\newcommand{\PP}{\mathbb{P}}

\newcommand{\ed}{\operatorname{ed}}


\newtheoremstyle{roman} 
 {8.0pt plus 2.0pt minus 4.0pt}  
 {8.0pt plus 2.0pt minus 4.0pt}  
 {\normalfont} 
 {}  
 {\bfseries}  
 {.}  
 {5pt plus 1pt minus 1pt} 
 {} 

\theoremstyle{roman}

\newtheorem{example}[proposition]{Example}
\newtheorem{remark}[proposition]{Remark}

\theoremstyle{plain}


\title{Azumaya algebras without involution}


\author{Asher Auel}
\author{Uriya A.\ First}
\author{Ben Williams}
\address{Asher Auel, Department of Mathematics, Yale University, 10 Hillhouse
Avenue, New Haven, CT 06511, United Stated}
\address{Uriya A.\ First and Ben Williams, Department of Mathematics,
The University of British Columbia, Room 121, 1984 Mathematics Road,
Vancouver, B.C., Canada V6T 1Z2}

\date{\today}

\keywords{
Azumaya algebra, involution, orthogonal involution, symplectic involution,
topological Azumaya algebra, classifying space, classifying stack,
Brauer group, Clifford algebra, torsor,
generic division algebra.
}

\begin{document}

\begin{abstract}
Generalizing a theorem of Albert, Saltman showed that an Azumaya
algebra $A$ over a ring represents a $2$-torsion class in the Brauer
group if and only if there is an algebra $A'$ in the Brauer class of
$A$ admitting an involution of the first kind. Knus, Parimala, and
Srinivas later showed that one can choose $A'$ such that $\deg
A'=2\deg A$. We show that $2\deg A$ is the lowest degree one can
expect in general. Specifically, we construct an Azumaya algebra $A$
of degree $4$ and period $2$ such that the degree of any algebra $A'$
in the Brauer class of $A$ admitting an involution is divisible by
$8$.

 Separately, we provide examples of split and non-split Azumaya algebras of degree $2$ admitting symplectic
 involutions, but no orthogonal involutions. These stand in contrast to the case of central simple algebras of even
 degree over fields, where the presence of a symplectic involution implies the existence of an orthogonal involution and vice
 versa.
\end{abstract}

\maketitle

\section*{Introduction}


Let $A$ be a central simple algebra over a field $F$.  It is a
classical result of Albert \cite{Al61StructureOfAlgs}*{X~\S9~Thm.~19}
(cf.\ \cite{InvBook}*{Thm.~3.1(1)}) that $A$ has an involution of the
first kind if and only if the Brauer class $[A]$ has period at most $2$, i.e.,
lies in the $2$-torsion part of the Brauer group $\Br(F)$. This
characterization was later extended and clarified by Scharlau
\cite{scharlau:involutions} and in unpublished work by Tamagawa.

Albert's theorem does not generally extend to Azumaya algebras $A$
over a commutative ring $R$.  However, Saltman~\cite[Thm.~3.1(a)]{Sa78}
showed that a class $[A] \in \Br(R)$ has period dividing $2$ if and only if
there is an Azumaya algebra $A'$ in the class $[A]$ admitting an
involution of the first kind. Furthermore, one can take $A'=A$ when
$R$ is semilocal or when $\deg A=2$, see \cite[Thms.~4.1,
4.4]{Sa78}. A later proof given by Knus, Parimala, and
Srinivas~\cite[Thm.~4.1]{KnParSri90}, which applies in the generality of
Azumaya algebras over schemes, shows that in this context $A'$ can be
choosen so that $\deg A'=2\deg A$.

\medskip

Suppose $X=\Spec R$ is connected and $[A]$ is a $2$-torsion class in
$\Br(X)$. It is natural to ask, in the context of the results of
\cite{Sa78} and \cite{KnParSri90}, whether in general $2\deg A$ is the
least possible degree of a representative $A'$ in the class $[A]$
admitting an involution of the first kind.  We answer this question in
the affirmative.

\begin{theoremintro}\label{TH:main}
There exists a commutative domain $R$ and an Azumaya $R$-algebra $A$
satisfying 
\[ 
\deg A=4,\qquad \per A=2 ,
\]
and such that any representative $A'$ in the Brauer class $[A]$ admitting an
involution of the first kind satisfies 
\[
{8\mid\deg A'}.
\]
\end{theoremintro}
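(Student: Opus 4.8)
The plan is to produce $A$ as the generic Azumaya algebra obtained by pulling back the universal degree-$4$ Azumaya algebra along a map to a classifying space, and to obstruct the existence of low-degree involutory representatives by a topological computation. First I would reduce the problem to topology: by a standard specialization/approximation argument (as in the work of Antieau--Williams on topological period--index problems), it suffices to construct a finite CW complex $Y$ and a degree-$4$ topological Azumaya algebra $\mathcal{A}$ on $Y$ with $\per \mathcal{A}=2$, such that any topological Azumaya algebra $\mathcal{A}'$ Brauer-equivalent to $\mathcal{A}$ and carrying an involution of the first kind has $8 \mid \deg \mathcal{A}'$; one then realizes $Y$ (or a suitable algebraic approximation of it) as a smooth affine variety over a field, takes $R$ to be its coordinate ring, and transports the algebra and the non-existence statement back down, using that an involution on an algebraic $A'$ would yield one on its topological realization.

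Next I would set up the relevant classifying spaces. A degree-$n$ Azumaya algebra with involution of the first kind is a torsor under $\PGO_n$ (orthogonal type) or, when $n=2m$, under $\PGSp_{2m}$ (symplectic type); an arbitrary degree-$n$ Azumaya algebra is a $\PGL_n$-torsor. Forgetting the involution gives maps $\B\PGO_n \to \B\PGL_n$ and $\B\PGSp_{2m}\to \B\PGL_{2m}$, and Brauer-equivalence of degree $4$ and degree $n$ algebras is encoded by the map $\B\PGL_4 \to \B\PGL_n$ stabilizing to $\B\PGL_\infty \simeq \mathbb{Z}\times \B\PU$ (rationally/torsion-theoretically this is the target computing the Brauer class). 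So the task becomes: find a class in $[Y,\B\PGL_4]$ whose image in $H^3(Y;\mathbb{Z})$ (the Brauer class) is $2$-torsion and nonzero, and which does not lift through $\B\PGO_n \sqcup \B\PGSp_n \to \B\PGL_n$ for any $n$ with $4 \nmid n$ — more precisely for any $n$ not divisible by $8$ — after composing with $\B\PGL_4 \to \B\PGL_n$.

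The key computational input, and the main obstacle, is to identify an obstruction living in the cohomology of $Y$ that (i) is detected by the degree-$4$ class we build, and (ii) is forced to vanish whenever an involutory representative of degree $n$ with $8\nmid n$ exists. The natural candidate comes from the mod-$2$ cohomology of the classifying spaces: one studies the composite $\B\PGO_n \to \B\PGL_n$ (and its symplectic analogue) on mod-$2$ cohomology, extracts a characteristic class — built from Stiefel--Whitney-type classes of the associated $\SO$ or $\Sp$ bundle and the Brauer class $t\in H^3(-;\mathbb{Z})$, e.g.\ a secondary operation or a specific polynomial in $w_2, w_3$ and $t$ — that obstructs the existence of an orthogonal or symplectic involution, and shows this obstruction depends on $n \bmod 8$ in such a way that it vanishes precisely when $8\mid n$. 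One then needs $Y$ of small enough dimension that this obstruction class survives (is nonzero) for the chosen $\PGL_4$-bundle but all competing higher obstructions vanish for dimension reasons; a skeleton of $\B(\mathbb{Z}/2 \times \mathbb{Z}/2)$ or of $\B\PGL_4$ itself, truncated in low degrees, is the likely source of $Y$. Pinning down this characteristic class and its $n \bmod 8$ behavior — essentially understanding the image of $H^*(\B\PGL_n;\mathbb{F}_2) \to H^*(\B\PGO_n;\mathbb{F}_2)$ and the analogous symplectic map, together with how the forgetful maps $\B\PGL_4 \to \B\PGL_n$ act — is the technical heart of the argument and where the real work lies.

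Finally, with the topological obstruction in hand, I would descend: choose $Y$ to be (homotopy equivalent to) a smooth affine complex variety, or more robustly use that the relevant cohomology classes and bundles are defined over a number field or a function field, so that étale cohomology of $\Spec R$ receives the topological classes via the cycle/comparison map; the $2$-torsion nonzero Brauer class and the degree-$4$ Azumaya algebra then exist over $R$, and any algebraic $A'$ with involution of degree $n$ would, upon taking $\mathbb{C}$-points and passing to topological realizations, produce a lift contradicting the topological obstruction unless $8\mid n$. This yields the commutative domain $R$ and the Azumaya algebra $A$ of degree $4$ and period $2$ with the stated property.
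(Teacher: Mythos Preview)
Your overall strategy is right: approximate the classifying space, pull back the universal algebra, and obstruct lifts to $\B\PGO_{2n}$ or $\B\PGSp_{2n}$ topologically, then descend via Totaro-style algebraic approximations and Jouanolou. But the heart of the argument---the actual obstruction---is missing from your proposal, and your guesses about its shape point in the wrong direction. You look for a mod-$2$ characteristic class built from Stiefel--Whitney data and secondary operations; the paper's obstruction is instead a \emph{rational} one, in $\Hoh^6(-,\QQ)$. The point is that $\Hoh^6(\B\Sp_{2n},\QQ)=0$ and $\Hoh^6(\B\SOg_{2n},\QQ)=0$ (the polynomial generators live in degrees $4,8,\dots$ in the symplectic case, and degree~$6$ is all torsion in the orthogonal case), whereas $\Hoh^6(\B\SL_4/\mu_2,\QQ)\cong\QQ$. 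So if you take $Y$ to be a $7$-equivalent approximation to $\B\SL_4/\mu_2$ and can show that any map $Y\to \B(\SOg_{2n}/\mu_2)$ or $Y\to \B(\Sp_{2n}/\mu_2)$ over $\B^2\mu_2$ forces an isomorphism on $\Hoh^6(-,\QQ)$, you are done when $4\nmid n$.

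That implication is the real content, and it is proved by a two-step spectral-sequence argument you do not anticipate. First, one works not with $\PGL_{2n}$ but with $\SL_{2n}/\mu_2$, so that every relevant space fibers over $\B^2\mu_2$; in the Serre spectral sequence of $\B\SL_{2n}\to\B\SL_{2n}/\mu_2\to\B^2\mu_2$ one computes (by comparison with the maximal torus) that $d_5(c_2)=\gcd(n,4)\,q_2$ in $\Hoh^5(\B^2\mu_2,\ZZ)\cong\ZZ/4$. Thus when $4\nmid n$, any map over $\B^2\mu_2$ inducing an isomorphism on $\Hoh^2(-,\ZZ/2)$ is forced to send $c_2$ to an odd multiple of $c_2$ in $\Hoh^4$ of the fiber. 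Second, one applies $\Sq^2$ (using $\Sq^2\bar c_2=\bar c_3$) to propagate this to $\Hoh^6$, and deduces that the map is an isomorphism on $\Hoh^6(-,\QQ)$. Factoring through $\Hoh^6(\B G,\QQ)=0$ then gives the contradiction. Your suggestion of a skeleton of $\B(\ZZ/2\times\ZZ/2)$ would not see this, and working with $\B\PGL_4$ directly obscures the clean fibration over $\B^2\mu_2$; the choice of $\SL_4/\mu_2$ is what makes the computation tractable.
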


This construction is done using approximations of the universal bundle
over the classifying stack of $\SL_4/\mu_2$, similarly to the methods
of \cite{antieau2014-a}.

\medskip

Involutions of the first kind of central simple algebras may be
further classified as being orthogonal or symplectic, see
\cite[\S2]{InvBook}. This classification extends to Azumaya algebras
over connected schemes \cite[\S{}III.8]{Kn91}.

It is well known that a central simple algebra of even degree over a
field admits either involutions of both types or of neither type.  We
show that this is not the case for Azumaya algebras. We construct both
split and non-split Azumaya algebras of degree $2$, each admitting a
symplectic involution but no orthogonal involutions. The split examples
are constructed using quadratic spaces and Clifford algebras, whereas
the non-split example relies on generic methods to construct an
Azumaya algebra without zero divisors that specializes to the given
split example. In fact, the latter example can made to specialize to
any prescribed Azumaya algebra with involution over an affine scheme.

\medskip

We remark that simply constructing Azumaya $R$-algebras $A$ without
involution and with period $2$ is a nontrivial problem. For instance,
it is impossible when $R$ is a semilocal ring (as mentioned above).
In addition, 
for many low-dimensional rings, e.g., integer rings of global fields,
the bound $\deg A'\leq 2\deg A$ of
\cite{KnParSri90} is not sharp; see \cite[Rem.~11.5]{Fi15}.

In \cite[\S4]{Sa78}, Saltman provided a criterion for split Azumaya
algebras to have an involution, which was used to provide explicit
split examples.  These methods were extended in
\cite[\S11]{Fi15} to give non-split examples.  Alternatively, given a
split example, a sufficiently generic algebra of period 2, which
specializes to the split example, cannot have any involution.  The
major difficultly then lies, in trying to fully verify
Theorem~\ref{TH:main}, with showing that no Brauer equivalent algebra
of the same degree can have an involution.  This is where the
topological obstruction theory is useful.

\medskip

 Given the above results, we conjecture that:
 \begin{enumerate}
 \item[(a)] For any $n\geq 2$, there is an Azumaya algebra $A$
 of degree $2^n$ and period $2$ such any $A'\in [A]$ with an involution of the first kind
 satisfies $2^{n+1}\mid \deg A'$.
 \item[(b)] For any $n\geq 1$ and $n\geq m\geq 0$, there is an Azumaya algebra $A$
 with $\deg A =2^n$ and $\ind A=2^m$
 which admits a symplectic involution, but not an orthogonal involution.
 \item[(c)] For any $n\geq 2$ and $n\geq m\geq 0$, there is an Azumaya algebra $A$
 with $\deg A =2^n$ and $\ind A=2^m$
 which admits an orthogonal involution, but not a symplectic involution.
 \end{enumerate}
 We have settled (a) when $n=2$ and (b) when $n=1$ and $m\in\{0,1\}$.

 We would also like to mention the case of involutions of the second
 kind, which we do not take up in this article.  Let $K/F$ be a
 separable quadratic extension of fields.  The Albert--Riehm--Scharlau
 Theorem \cite{InvBook}*{Thm.~3.1(2)} asserts that a central simple
 $K$-algebra $A$ has an involution of the second kind with fixed field
 $F$ if and only if the corestriction $\mathrm{cor}_{K/F}[A]$ is
 trivial in $\Br(F)$.  Saltman~\cite[Thm.~3.1(b)]{Sa78} showed that a
 class $[A] \in \Br(S)$, where $S/R$ is a quadratic Galois extension
 of commutative rings, has trivial corestriction if and only if there
 is an Azumaya algebra $A'$ in the class $[A]$ admitting an involution
 of the second kind fixing $R$ pointwise.  A different proof given by
 Knus, Parimala, and Srinivas~\cite[Thm.~4.2]{KnParSri90}, shows that
 $A'$ can be choosen so that $\deg A'=2\deg A$.  It is then natural to
 ask, whether in general $2\deg A$ is the least possible degree of a
 representative $A'$ in the class $[A]$ admitting an involution of the
 second kind.

\begin{questionintro}\label{Q:second}
Do there exist a commutative domain $R$, a  quadratic Galois 
extension $S/R$, and an Azumaya $S$-algebra $A$ of degree $n$ with
$\mathrm{cor}_{S/R}[A]$ trivial, such that any representative $A'$ in
the Brauer class $[A]$ admitting an involution of the second kind with
fixed ring $R$ has $2n\mid \deg A'$?
\end{questionintro}

\medskip

We warmly thank B.\ Antieau, without whom this paper would not have
been written in its present form. We thank R.\ Parimala and D.\
Saltman for useful remarks and suggestions.  The second named author
is grateful to Z.\ Reichstein for many beneficial discussions.  The
first author was partially sponsored by NSF grant DMS-0903039 and a
Young Investigator grant from the NSA.

\medskip

The paper is organized as follows: In the first section, we provide
some of the necessary background on Azumaya algebras and their
involutions. The second section is topological in content.  After a
brief primer on the required theory of classifying spaces and
cohomology, it is dedicated to giving certain obstructions to the
existence of maps between classifying spaces. These obstructions are
described in Proposition~\ref{something}, which is used in Section~3
to construct an example of an Azumaya algebra of period 2 and degree 4
for which any Brauer-equivalent algebra with involution has degree no
less than $8$. Section 4 exhibits a split Azumaya algebra equipped
with a symplectic but not an orthogonal involution. The fifth and
final section is concerned with two constructions of non-split Azumaya
algebras that specialize to the split example of the previous section,
thus consequently giving examples of non-split Azumaya algebras
admitting symplectic but not orthogonal involutions.

\bigskip

\section{Preliminaries}
\label{sec:preliminaries}


We assume throughout that $2$ is invertible in all rings and schemes.

\medskip

We begin by recalling some facts about Azumaya algebras and their
involutions. For simplicity, we have restricted here to connected
rings, schemes, etc., but the extension to disconnected settings
requires only some additional bookkeeping.

\subsection{Azumaya algebras}

Let $R$ be a connected commutative ring. Recall that an $R$-algebra
$A$ is called \emph{Azumaya} if $A$ is locally free of finite nonzero rank and
the map $\Phi:A\otimes_R A^{\mathrm{op}}\to \End_R(A)$ given by
$\Phi(a\otimes b^\mathrm{op})(x)=axb$ is an isomorphism.
Equivalently, $A$ is Azumaya if there exists a faithfully flat
\'{e}tale $R$-algebra $S$ such that $A\otimes_RS\cong \Mat_{n\times
n}(S)$ as $S$-algebras, \cite[Thm.~III.5.1.1]{Kn91}. The number $n$ is
called the \emph{degree} of $n$ and is denoted $\deg A$.  When $R$ is
a field, Azumaya algebras are precisely the central simple algebras.

 Two Azumaya algebras $A$, $B$ over $R$ are \emph{Brauer equivalent} if
 there are locally free $R$-modules $P$ and $Q$ of finite rank such that
 $A\otimes_R\End_R(P)\cong B\otimes_R\End_R(Q)$ as $R$-algebras.
 The equivalence class of $A$ is called the \emph{Brauer class} of $A$ and denoted $[A]$.
 The \emph{index} of $A$, denoted $\ind A$, is $\mathrm{gcd}\{\deg A'\,|\,A'\in [A]\}$.
 Unlike in the case where $R$ is a field, it is possible that there might be no representative
 $A'$ in the class $[A]$ with $\deg A'=\ind A$; see \cite{antieau2014-a}.
 The collection of Brauer
 classes with the operation $[A]\otimes [B]:=[A\otimes_RB]$
 forms a group, denoted $\Br(R)$,
 and called the \emph{Brauer group} of $R$.
 See \cite{DeMeyIngr71SeparableAlgebras}, \cite[\S{}III.5]{Kn91} for further details.

Let $X$ be a connected scheme.  An Azumaya algebra of degree $n$ over
 $X$ is a sheaf of $\calO_X$-modules that is locally isomorphic in the
 \'{e}tale topology to $\Mat_{n\times n}(\calO_X)$.  That is, $X$ has
 an \'{e}tale cover (possibly disconnected) $\pi:U\to X$ such that $\pi^*A\cong \Mat_{n\times
 n}(\calO_U)$.  If $X=\Spec R$ for a commutative ring $R$, then $A$ is
 Azumaya over $X$ if and only if $\Gamma(X,A)$ is Azumaya as an
 $R$-algebra.  The degree of $A$, the Brauer class $[A]$, the Brauer
 group of $X$, etc.\ are defined as above---replace $P$ and $Q$ by
 locally free $\calO_X$-modules of finite rank and $\End(-)$ by the
 sheaf $\EEnd(-)$. See \cite[Chp.~IV]{Milne80EtaleCohomology} for
 further details.

\medskip

In the same way, one can define Azumaya algebras, Brauer classes, etc., in any locally ringed topos, \cite{grothendieck1968-a}.
In the case where $X$ is a topological space and $\calO_X$ is
the sheaf of continuous functions into $\CC$, an Azumaya algebra over $X$ is called a \emph{topological Azumaya
 algebra}. A topological Azumaya algebra $A$ over $X$ is therefore a sheaf of $\CC$-algebras over $X$ the fibers
of which are isomorphic to $\Mat_{n\times n}(\CC)$; see \cite{antieau2014}.

\subsection{Involutions}

Let $F$ be a field and let $A$ be a central simple $F$-algebra.
Recall that an involution on $A$ is an additive map $\sigma:A\to A$
such that $\sigma\circ \sigma=\id_{A}$ and $\sigma(ab)=\sigma(b)\cdot
\sigma(a)$ for all $a,b\in A$.  The involution $\sigma$ is of the
\emph{first kind} if it fixes $F$ pointwise and of the \emph{second
kind} otherwise. Henceforth, all involutions are assumed to be of the
first kind. An Azumaya algebra admitting an involution will be termed \textit{involutary}.

Write $n=\deg A$. Recall from \cite[\S2]{InvBook} that the
$F$-dimension of $\{\sigma(a)-a\,|\, a\in A\}$ is either
$\frac{1}{2}n(n-1)$ or $\frac{1}{2}n(n+1)$, in which case the type of
$\sigma$ is said to be \emph{orthogonal} or \emph{symplectic}
respectively. Symplectic involutions can exist only when $n$ is even.
\medskip

Let $X$ be a connected scheme (resp.\ topological space) and let $A$
be an Azuamya algebra (resp.\ topological Azumaya algebra) over
$X$. An involution on $A$ is an $\calO_X$-module isomorphism $\sigma:A\to
A$ such that $\sigma\circ \sigma=\id_{A}$ and the identity
$\sigma_*(ab)=\sigma_* b\cdot \sigma_* a$ holds on sections. In this
case, we say that the pair $(A,\sigma)$ is an \emph{Azumaya algebra
with involution} (of the first kind). If $X$ is $\Spec R$ for a commutative ring
$R$, a map $\sigma$ is an involution if and only if
$\sigma_*:\Gamma(X,A)\to\Gamma(X,A)$ is an involution of Azumaya
$R$-algebras.

Let $x$ be a point of $X$. In the algebraic case, let $k = k(x)$, and
in the topological, let $k= \CC$. By pulling back, $A$ induces a
central simple $k$-algebra $A_x$ and $\sigma$ induces an involution of
the first kind $\sigma_x:A_x\to A_x$, which can be either orthogonal
or symplectic. Since $X$ is connected, the type of $\sigma_x$ is
independent of $x$, so we can simply say that $\sigma$ is orthogonal
or symplectic. See \cite[\S{}III.8]{Kn91} for an extensive discussion
in the case of affine schemes and
\cite[\S{}1.1]{parimala_srinivas:brauer_group_involution} in the case
of arbitrary schemes.

\begin{example}\label{EX:involutions}
Denote by $\tr$ the standard matrix transpose. 
\begin{enumerate} \renewcommand{\itemsep}{3pt}
\item The transpose $\tr$ defines an orthogonal involution of $\Mat_{n\times
n}(\calO_X)$.

\item Suppose $n$ is even and let $\syp$ denote the standard
symplectic involution on $\Mat_{n\times n}(\calO_X)$, defined 
 by
 \[\syp_*\smallSMatII{A}{B}{C}{D}=\smallSMatII{A}{B}{C}{D}^{\syp}:=\smallSMatII{A^\tr}{-C^\tr}{-B^\tr}{D^\tr}\]
 on sections---here $A,B,C,D$ are $\frac{n}{2}\times \frac{n}{2}$
 matrices.

 \item Let $b:\calP\times \calP\to \calL$ be nondegenerate symmetric
 (resp.\ anti-symmetric) bilinear form, where $\calP$ is a locally
 free $\calO_X$-module of finite rank and $\calL$ is an invertible
 $\calO_X$-module on $X$.  Then $b$ induces an orthogonal (resp.\
 symplectic) involution $\sigma$ on $\EEnd(\calP)$; on sections,
 $\sigma_*$ can be recovered from the equality
 $b_*(ax,y)=b_*(x,\sigma_*(a)y)$.  Moreover any orthogonal (resp.\
 symplectic) involution on $\EEnd(\calP)$ is obtained in this manner,
 as $\calL$ varies, by virtue of \cite[Thm.~4.2]{Sa78}.  For a
 generalization see \cite[Thm.~5.7]{Fi14}.
 \end{enumerate}

\end{example}

 \begin{example}\label{EX:deg-two-Az-algebra}
 Azumaya algebras of degree $2$ may always be equipped with a symplectic involution.
 To see this, let $A$ be an Azumaya algebra of degree $2$ over $X$.
 If $U$ is an open affine subscheme of $X$,
 define $\sigma_U:\Gamma(U,A)\to \Gamma(U,A)$ by $\sigma_U(a)=\mathrm{Trd}(a)-a$, where
 $\mathrm{Trd}(a)$ denotes the reduced trace of $a\in \Gamma(U,A)$; see {\cite[Thm.~4.1]{Sa78}}.
 \end{example}

\begin{example}
To construct split Azumaya algebras with no involution of the first
kind, we simply need to find a locally free $\calO_X$-module $\calP$
such that $\calP \not\cong \calP^{\vee}\otimes\calL$ for any
invertible $\calO_X$-module $\calL$ on $X$.  Examples over projective
schemes are easy to construct, e.g., $X=\PP^1$ and $\calP = \calO_X
\oplus \calO_X \oplus \calO_X \oplus \calO_X(1)$.  Similar examples
were constructed over appropriate Dedekind domains in
\cite[\S11]{Fi15}.
\end{example}

 \begin{proposition}\label{PR:local-isom-of-algebras-with-inv}
 Let $X$ be a connected scheme, let $A$ be an Azumaya algebra of
 degree $n$ over $X$, and let $\sigma:A\to A$ be an involution of
 orthogonal or symplectic type.  Then $(A,\sigma)$ is locally
 isomorphic to $(\Mat_{n\times n}(\calO_X),\tr)$ or
 $(\Mat_{n\times n}(\calO_X),\syp)$ in the \'{e}tale topology, respectively.
 \end{proposition}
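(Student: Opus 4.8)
The plan is to reduce the statement to a local computation and then invoke the classification of involutions on matrix algebras over local rings. First I would localize: since $A$ is Azumaya, there is an \'etale cover $\pi\colon U\to X$ with $\pi^*A\cong\Mat_{n\times n}(\calO_U)$; working over a connected component of $U$, or after further \'etale localization, we may assume $X=\Spec R$ with $R$ local (strictly henselian if convenient) and $A=\Mat_{n\times n}(R)$. The content is then: any involution $\sigma$ of the first kind on $\Mat_{n\times n}(R)$ is, after conjugation by a unit, either the transpose $\tr$ (if orthogonal) or the standard symplectic involution $\syp$ (if symplectic). Here one should be a little careful that ``of orthogonal type'' is a condition that can be checked fiberwise and is stable under the localization, which follows from the discussion preceding Example~\ref{EX:involutions} on the type being locally constant.

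For the local classification I would argue as follows. Given $\sigma$, the map $a\mapsto \sigma(a^\tr)$ is an $R$-algebra automorphism of $\Mat_{n\times n}(R)$, hence by Skolem--Noether over the local ring $R$ (every automorphism is inner, as $\mathrm{Pic}(R)=0$ and $R$ is local) it is conjugation by some $g\in\GL_n(R)$; thus $\sigma(a)=g\,a^\tr g^{-1}$. The relation $\sigma^2=\id$ forces $g^\tr = \lambda g$ for a unit $\lambda$, and applying $\tr$ twice gives $\lambda^2=1$, so $\lambda=\pm1$ since $2$ is invertible. Thus $g$ defines a nondegenerate bilinear form on $R^n$ that is symmetric ($\lambda=1$) or alternating ($\lambda=-1$); the type of $\sigma$ matches the sign by the dimension count in the definition of orthogonal/symplectic (checked on fibers). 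Finally I would invoke the classification of nondegenerate symmetric, resp.\ alternating, bilinear forms over a local ring in which $2$ is a unit: an alternating form is always equivalent to the standard hyperbolic/symplectic form (so $n$ is even and $g$ may be taken to be the matrix defining $\syp$), and a nondegenerate symmetric form over a local ring is diagonalizable, and moreover becomes isometric to the identity form after passing to a further \'etale (in fact finite \'etale) extension adjoining square roots of the diagonal entries. Conjugating by the change-of-basis matrix then carries $\sigma$ to $\tr$ or $\syp$ respectively, which is exactly the assertion that $(A,\sigma)$ is \'etale-locally isomorphic to $(\Mat_{n\times n}(\calO_X),\tr)$ or $(\Mat_{n\times n}(\calO_X),\syp)$.

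The main obstacle is the orthogonal case: over a general local ring a nondegenerate symmetric bilinear form need not be isometric to the split form on the nose, only after diagonalization, so one genuinely needs the extra \'etale extension adjoining square roots of units to trivialize it — this is why the statement only claims an \'etale-local, not Zariski-local, isomorphism. One should also make sure the reduction handles the passage between $A$ as an $\calO_X$-algebra and $\Gamma(U,A)$ as a ring correctly, and note that this is essentially the content of Example~\ref{EX:involutions}(3) together with \cite[Thm.~4.2]{Sa78}, which identifies involutions on $\EEnd(\calP)$ with forms valued in line bundles; locally the line bundle trivializes, reducing to forms on a free module, and the rest is the linear algebra above. Everything else — Skolem--Noether over a local ring, the $\lambda=\pm1$ dichotomy, diagonalization of symmetric forms, standard form of alternating forms — is routine.
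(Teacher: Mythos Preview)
Your argument is correct and is exactly the standard proof: trivialize $A$ \'etale-locally, use Skolem--Noether over a local ring to write $\sigma(a)=g\,a^{\tr}g^{-1}$ with $g^{\tr}=\pm g$, and then normalize the resulting bilinear form (hyperbolic in the alternating case, diagonalizable plus adjoining square roots in the symmetric case). The paper does not give an independent argument but simply refers to \cite[\S{}III.8.5]{Kn91} and \cite[\S1.1]{parimala_srinivas:brauer_group_involution}, whose content is precisely the outline you have written, so you are in full agreement with the paper's approach.
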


 \begin{proof}
 We can assume $X$ is affine, and in this case we refer to
 \cite[\S{}III.8.5]{Kn91}.  See also \cite[\S{}1.1]{parimala_srinivas:brauer_group_involution}.
 \end{proof}

\subsection{Cohomology}
\label{subsection:cohomology}

Let $X$ be a scheme and let $X_{\et}$ denote $X$ endowed with the \'{e}tale topology. Recall that $\PGL_n$ is the sheaf
of algebra automorphisms of $\Mat_{n\times n}$. There is a standard bijective correspondance between isomorphism
classes of Azumaya algebras of degree $n$ and of $\PGL_n$-torsors over $X_\et$, and each is classified by
$\Hoh^1_\et(X, \PGL_n)$; see \cite[p.~145]{Kn91} and \cite[Chp.~III, Cor.~4.7]{Milne80EtaleCohomology}. The general
principle of this correspondence appears as \cite[Cor.\ III.2.2.6]{giraud1971}.

Suppose now that $2$ is invertible on $X$ and let $\PGO_{2n}$ and
$\PGSp_{2n}$ denote the sheaf of automorphisms of $(\Mat_{2n\times
2n},\tr)$ and $(\Mat_{2n\times 2n},\syp)$, respectively. In the same
manner as the above, there is a bijective correspondence between
isomorphism classes of $\PGO_{2n}$-torsors (resp.\
$\PGSp_{2n}$-torsors) over $X_{\et}$ and isomorphism classes of
Azumaya algebras with an orthogonal (resp.\ symplectic) involution of
degree $2n$ over $X$, see
Proposition~\ref{PR:local-isom-of-algebras-with-inv},
\cite[\S{}III.8.5]{Kn91}, or
\cite[\S1.1(iii)]{parimala_srinivas:brauer_group_involution}.

Similarly, when $X$ is a topological space, isomorphism classes of
topological Azumaya algebras with an orthogonal (resp.\ symplectic)
involution of degree $2n$ over $X$ correspond to isomorphism classes
of $\PGO_{2n}(\CC)$-bundles (resp.\ $\PGSp_{2n}(\CC)$-bundles) over
$X$. The Skolem--Noether Theorem implies that $\PGO_{2n}(\CC)\cong
\PO_{2n}(\CC)=\Og_{2n}(\CC)/\mu_2$ and $\PGSp_{2n}(\CC)\cong \PSp_{2n}(\CC)=\Sp_{2n}(\CC)/\mu_2$
where $\mu_2=\{\pm 1\}$.

 \section{Homotopy Theory}

 In this section, we use algebraic topology to find obstructions to maps between classifying spaces of Lie groups by
 considering the rational cohomology of maximal tori. This idea appears at least as long ago as \cite{adams1976}. Our
 application requires that we consider spaces that have the same homotopy and homology groups as classifying spaces in a
 certain range, but which differ in general; this contrasts with the results of \cite{jackowski1992} where the classifying space is considered in its entirety.

 In this section and the next, the notation $\Hoh^*(X, A)$ will be used for sheaf cohomology groups with values in the sheaf represented by the topological
 group $A$. When $A$ is discrete, and $X$ has the homotopy type of a CW complex, this coincides with singular cohomology with coefficients in $A$.

 We make extensive use of classifying spaces of topological groups. A thorough discussion of this material may be found in
 \cite [\S 8 and 9]{may1975}, but the discussion in \cite[Chp.~6]{mccleary2001} covers most of what we need. That is, if $G$ is a
 topological group (satisfying a mild topological hypothesis which is satisfied by all Lie groups) there is a functorially
 defined pointed CW complex $\B G$ and a principal $G$-bundle $\mathrm{E} G \to \B G$. If $X$ is any pointed CW complex, then there is a natural
 bijection of pointed sets between $[X, \B G]$ and $\Hoh^1(X, G)$, where the former denotes the set of homotopy classes of pointed maps between the
 spaces in question. The correspondence is given by assigning to a map $\xi : X \to \B G$ the pull-back of $\mathrm{E} G$ along $\xi$. It is a
 theorem that the isomorphism class of the resulting $G$-bundle depends on $\xi$ only up to homotopy. One may replace $\B G$ by a homotopy
 equivalent CW complex without harming this bijection. If $G$ is an abelian group, then $\B G$ is again a topological group, and we may form $\B^2G$, $\B^3G$
 and so on. If $G$ is discrete, then $\B^iG$ is homotopy equivalent to
 the Eilenberg--MacLane space $K(G,i)$, and $[X, K(A,i)]$ is
 isomorphic to the reduced cohomology group $\tilde \Hoh^i(X, A)$.

 \subsection{The {\boldmath{$2$}}-torsion of the topological Brauer group.}
 \label{subsection:torsion-of-brauer-grp}

The relationship between the topological Brauer group, $\Br_\topo(X)$, and the cohomology groups $\Hoh^2(X,
\CC^*)_\tors$ and $\Hoh^3(X, \ZZ)_\tors$ shall become important in Section \ref{section:back-to-algebra}, so we discuss
it here.

 First of all, there is an isomorphism $\Hoh^3(-, \ZZ)\iso \Hoh^2(-, \CC^\times)$. This arises from the exponential short exact sequence of
 topological groups $0 \to \ZZ \to \CC \to \CC^\times \to 1$, where $\CC$ is a contractible topological group, so
 $\Hoh^{\geq 1}(-, \CC)$ vanishes.
Often when
 $\Hoh^*(-, \CC)$ is written elsewhere, $\CC$ is to be understood as a discrete ring, so that $\Hoh^*(-, \CC) \iso
 \Hoh^*(-, \ZZ) \tensor_\ZZ \CC$; this is not the case here.


 Secondly, in general $\Br_\topo(-)$ is a subfunctor of $\Hoh^3(-,
 \ZZ)$, see \cite[Prp.~1.4]{grothendieck1968-a}. It is the subfunctor
 consisting of those classes that lie in the image of the coboundary
 maps $\Hoh^1(-, \PGL_n(\CC)) \to \Hoh^2(-, \CC^*)$ arising from the
 exact sequence $\CC^\times\to \GL_n(\CC)\to \PGL_n(\CC)$. Whenever
 $X$ has the homotopy type of a finite CW complex, $\Br_\topo(X)$ may
 be identified with the torsion subgroup of $\Hoh^3(X, \ZZ)$, this
 being a theorem of Serre \cite[Thm.~1.6]{grothendieck1968-a}. While
 there are spaces for which $\Br_\topo(X) \subsetneq
 \Hoh^3(X,\ZZ)_{\mathrm{tors}}$, for example $\B^2 \mu_2$ (see
 \cite[Cor.\ 5.10]{antieau2014}), in the sequel we shall deal only
 with classes $\alpha \in \Hoh^3(X, \ZZ)_\tors$ that arise as the
 pull-back $f^*(\alpha_0)$ of a canonical class in $\Hoh^2(\B \PGL_n,
 \CC^\times)_\tors$ corresponding to the universal $\PGL_n$--bundle on
 $\B\PGL_n$. Such a class $\alpha$ necessarily lies in $\Br_\topo(X)$.

\medskip

 In the sequel, we generally write $\mu_2$ for the group of square roots of $1$. We will use $\Hoh^*(-, \mu_2)$
 and $\Hoh^*(-, \ZZ/2)$ interchangeably.

 Consider the group $\mu_2 $ embedded as scalar matrices $\{\pm I_{2n}\}$ in $\GL_{2n}(\CC)$. The inclusion $\mu_2 \to \GL_{2n}(\CC)$
 factors through either of the two standard inclusions $\Og_{2n}(\CC)$ and $\Sp_{2n}(\CC)$, and it follows that there are natural maps
 $\Og_{2n}(\CC)/\mu_2 \to \GL_{2n}(\CC)/\mu_2 \to \PGL_{2n}(\CC)$ and $\Sp_{2n}(\CC)/\mu_2 \to \GL_{2n}(\CC)/\mu_2 \to
 \PGL_{2n}(\CC)$.

 We set about explaining how a principal $\Og_{2n}(\CC)/\mu_2$- or $\Sp_{2n}(\CC)/\mu_2$-bundle gives rise to a $2$-torsion Brauer
 class, cf.\ \cite[p.~214]{parimala_srinivas:brauer_group_involution}. Suppose $G \subset \GL_{2n}(\CC)$ is a subgroup such that $\mu_2 \subset G$. There is an associated map of short
 exact sequences
 \begin{equation}
 \label{eq:iv}
 \xymatrix{ 1 \ar[r] & \mu_2 \ar[d] \ar[r] & G \ar[d] \ar[r] & G/\mu_2 \ar[d] \ar[r] & 1 \\ 1 \ar[r] & \Gm \ar[r] &
 \GL_{2n} \ar[r] & \PGL_{2n} \ar[r] & 1 }
 \end{equation}
and therefore, for any CW complex $X$, a natural long exact sequences of cohomology groups
 yielding a commutative diagram:
 \begin{align}
 \label{eq:ii}
 &\xymatrix{ \Hoh^1(X, G/\mu_2) \ar[rr] \ar[d] & & \Hoh^2(X, \ZZ/2) \ar[d] \\ \Hoh^1(X, \PGL_{2n}(\CC)) \ar[r] & \Br_\topo(X)
 \ar@{^(->}[r] & \Hoh^2(X, \CC^*) \ar^{\iso}@{-}[r] & \Hoh^3(X, \ZZ).
 }
 \end{align}
That is to say, associated to an element in $\Hoh^1(X, G/\mu_2)$, there is an element of $\Hoh^3(X, \ZZ)$ which lies in
$\Br_{\topo}(X)$ and also in the image of a map from $\Hoh^2(X, \ZZ/2)$. It is therefore $2$-torsion.

There is a diagram of short exact sequences
\[ \xymatrix@C=30pt{ 0 \ar[r] & \ZZ \ar^{\times 2}[r] \ar@{=}[d] & \ZZ \ar[r] \ar^{\times i \pi}[d] & \ZZ/2 \ar^{x
    \mapsto (-1)^x}[d] \ar[r] & 0 \\ 0 \ar[r] & \ZZ \ar[r] & \CC \ar_{\exp}[r] & \CC^* \ar[r] & 1 } \] from which it
follows that the composite map $\Hoh^2(-, \ZZ/2 )
\to \Hoh^2(- , \CC^*) \to \Hoh^3(- , \ZZ)$ appearing in
Diagram \ref{eq:iv} is simply the unreduced Bockstein homomorphism associated to $\ZZ \xrightarrow{\times 2} \ZZ \to
\ZZ/2$, and denoted
\[\beta_2:\Hoh^2(-, \ZZ/2 )\to \Hoh^3(- , \ZZ)\ .\]

 \begin{remark} \label{rem:iv}
 For technical reasons, we find it easier to work with the simply connected group $\SL_{2n}(\CC)$ rather than $\GL_{2n}(\CC)$. For
 comparison, therefore, we should work with $\SOg_{2n}(\CC)$ rather than $\Og_{2n}(\CC)$.

 The inclusion of scalar matrices $\mu_2 \to \Og_{2n}$ factors as $\mu_2 \to \SOg_{2n} \to \Og_{2n}$ and there is
 a diagram
 \[ \xymatrixcolsep{4.5pc}
 \xymatrix{ \mu_2 \ar@{=}[r] \ar[d] & \mu_2 \ar[r] \ar[d] & 1 \ar[d] \\ 
 \SOg_{2n}(\CC) \ar[d] \ar[r] & \Og_{2n}(\CC) \ar^{\det}[r] \ar[d] & \mu_2 \ar[d] \\ 
 \SOg_{2n}(\CC) / \mu_2 \ar[r] & \Og_{2n}(\CC)/\mu_2 \ar^{\det}[r] & \mu_2 }\]
 in which every row and column is exact. In particular, the diagram
 \[ \xymatrix@C=7pt{\Hoh^1(Y, \SOg_{2n}(\CC)/\mu_2) \ar[dr] \ar[rr] & & \Hoh^1(Y, \Og_{2n}(\CC)/\mu_2 )\ar[dl]\ar[rrr] & & & \Hoh^1(Y, \ZZ/2) \\ &
 \Hoh^2(Y, \ZZ/2) } \] commutes and the top row is an exact sequence of pointed sets. If $\Hoh^1(Y, \ZZ/2) = 0$, then the
 problem of finding a lift of an element of $\Hoh^2(Y, \ZZ/2)$ to a class in $\Hoh^1(Y, \Og_{2n}(\CC)/\mu_2)$ is equivalent to the
 problem of lifting to $\Hoh^1(Y, \SOg_{2n}(\CC)/\mu_2)$.

 Since $\Sp_{2n}(\CC) \subset \SL_{2n}(\CC)$, no modification of the group is required in the symplectic case.
 \end{remark}

 Restricting our attention to the special-linear setting, we have groups: $\SL_{2n}(\CC)/\mu_2$, $\Sp_{2n}(\CC)/\mu_2$
 and $\SOg_{2n}(\CC)/\mu_2$.
 There are induced diagrams
 \begin{equation} \label{eq:i} \xymatrix@C=7pt{ \B \Sp_{2n}(\CC)/\mu_2 \ar[rr] \ar_{\xi_s}[dr] & & \B \SL_{2n}(\CC)/ \mu_2 \ar^{\xi}[dl]
 \\ & \B^2 \mu_2 }, \quad \xymatrix@C=7pt{ \B \SOg_{2n}(\CC)/\mu_2 \ar[rr] \ar_{\xi_o}[dr] & & \B \SL_{2n}(\CC)/ \mu_2 \ar^{\xi}[dl]
 \\ & \B^2 \mu_2 .}
 \end{equation}
 These correspond to diagrams in cohomology for a CW complex $X$:
 \begin{equation}
 \label{eq:iii}
 \xymatrix@C=16pt{ \Hoh^1(X, \Sp_{2n}(\CC)/\mu_2) \ar[r] \ar[d] & \Hoh^2(X, \mu_2) \ar@{=}[d] \\ \Hoh^1(X, \SL_{2n}/
 \mu_2) \ar[r] & \Hoh^2(X, \mu_2) }  
 \quad 
 \xymatrix@C=16pt{ \Hoh^1(X, \SOg_{2n}(\CC)/\mu_2) \ar[r] \ar[d] & \Hoh^2(X, \mu_2) \ar@{=}[d] \\ \Hoh^1(X, \SL_{2n}/
 \mu_2) \ar[r] & \Hoh^2(X, \mu_2) }
 \end{equation}

We now summarize the topological argument.   In the next section, we will construct a pointed CW complex $Y$ such that $\Hoh^1(Y, \ZZ/2) = 0$ and $\Hoh^2(Y,\ZZ)=0$, 
 and a $2$-torsion class $\alpha \in \Br_\topo(Y) \subset
 \Hoh^3(Y, \ZZ)$.

 The class $\alpha$ admits a lift to a class \[\zeta \in \Hoh^2(Y, \ZZ/2)\] such that the unreduced Bockstein $\beta_2(\zeta)$ is $\alpha$. The
 condition $\Hoh^2(Y, \ZZ) = 0$ ensures the class $\zeta$ is unique. It will be the case that $\zeta$ is in the image of a map $\Hoh^1(Y ,
 \SL_4(\CC) / \mu_2) \to \Hoh^2(Y, \ZZ/2)$, so that in particular, $\alpha$ is in the image of a map $\Hoh^1(Y, \PGL_4(\CC)) \to \Hoh^2(Y,
 \CC^*) = \Hoh^3(Y, \ZZ)$; cf.\ Diagram \eqref{eq:ii}. That is, the index of $\alpha$ divides $4$.

 The class $\zeta$ is represented by a map $\zeta : Y \to \B^2 \mu_2$. We will show that there is no factorization in homotopy of $\zeta$ as a
 map $Y \to \B G\to \B^2 \mu_2$, where $G$ is either $\SOg_{2n}(\CC)/\mu_2$ or $\Sp_{2n}(\CC)/\mu_2$ for $n \not \equiv 0 \pmod 4$, thus
 showing that $\zeta \in \Hoh^2(Y, \ZZ/2)$ is not in the image of the natural map $\Hoh^1(Y, G) \to \Hoh^2(Y, \ZZ/2)$. It follows
 that $\alpha$ is not in the image of the natural map $\Hoh^1(Y, G) \to \Hoh^3(Y, \ZZ)$ for such $G$, and this shows that $\alpha$
 is not the Brauer class of an Azumaya algebra with involution of degree $2n$ where $n \not \equiv 0 \pmod 4$.

\subsection{The Cohomology of $\B \SL_{2n}/{\mu}_2$}

In order to simplify the notation, we have written in this section
$\SL_{2n}$ in place of $\SL_{2n}(\CC)$ and
similarly for $\GL_{2n}$.

The Cartan--Iwasawa--Malcev Theorem, see for instance
\cite{iwasawa1949}, says that the Lie groups $\SL_{2n}(\CC)$,
$\Og_{2n}(\CC)$, and $\Sp_{2n}(\CC)$ are homotopy equivalent to maximal
compact subgroups, which may be taken to be the compact Lie groups
$\SU_{2n}$, $\Og_{2n}(\RR)$, and $\Sp_{2n}$ respectively.

Our main technical tool is the low-degree part of the Serre spectral sequence in integral cohomology associated to the fiber
sequence $\B\SL_{2n} \to \B\SL_{2n} / \mu_2 \to \B^2 \mu_2$. A portion of this is presented in Figure~\ref{fig:1}.

Necessary background for the Serre spectral sequence \textit{per se} may be found in \cite[Chp.\
5,6]{mccleary2001}. The $\Eoh_2$ page of the Serre spectral sequence of a fiber sequence $F \to E \to B$, where $B$ is
simply connected, takes the form $\Eoh_2^{p,q} = \Hoh^p(B, \Hoh^q(F, \ZZ))$, the differentials satisfy $d_r:
\Eoh_r^{p,q} \to \Eoh_r^{p+r, q-r+1}$, and it converges strongly to $\Hoh^{p+q}(E, \ZZ)$. In the cases we encounter,
$\Eoh_2^{p,q} = \Hoh^p(B, \ZZ) \tensor_\ZZ \Hoh^q(F, \ZZ)$, since $\Hoh^*(F, \ZZ)$ will be a free abelian group.

The cohomology of $\B\SL_{2n}$ is well-known, and may be deduced from \cite[Ex.\ 5.F,~Thm.\ 6.38]{mccleary2001},
along with the observation that $\B \SU_{2n}$ is homotopy equivalent to $\B \SL_{2n}$. The homology  of $\B^2
\mu_2$ can be found in \cite{cartan1954}, and the cohomology may be deduced from there using the Universal Coefficients
Theorem.
\begin{figure}[h]
 \centering
\[ \xymatrix@R=5pt@C=10pt{ c_2\ZZ = \Hoh^4(\B \SL_{2n}, \ZZ) \ar^{d_5}[ddddrrrrr] \\ 0 \\ 0 \\ 0 \\ \ZZ & 0 & 0 & q_1\ZZ/2 & 0 & q_2\ZZ/4. }
\]
 \caption{A portion of the Serre spectral sequence in integral cohomology associated to $\B \SL_{2n} \to \B \SL_{2n} / \mu_2 \to \B^2\mu_2$.}
 \label{fig:1}
\end{figure}

\begin{lemma} \label{lem:H2lem}
 The map $\xi: \B \SL_{2n}/ \mu_2 \to \B^2 \mu_2$, appearing in Diagram (\ref{eq:i}), represents a generator of
 \[ \Hoh^2(\B \SL_{2n}/ \mu_2 , \ZZ) \iso \ZZ/2.\]
\end{lemma}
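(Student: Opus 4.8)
The plan is to compute $\Hoh^2(\B\SL_{2n}/\mu_2,\ZZ)$ via the Serre spectral sequence of the fiber sequence $\B\SL_{2n}\to\B\SL_{2n}/\mu_2\to\B^2\mu_2$ depicted in Figure~\ref{fig:1}, identify the generator with the class represented by $\xi$, and then check that the map $\xi$ induces an isomorphism on this $\Hoh^2$. First I would read off the relevant $\Eoh_2$-entries: since $\B\SL_{2n}$ is $3$-connected (its cohomology begins $\ZZ$ in degree $0$, then $0$ in degrees $1,2,3$, then $c_2\ZZ$ in degree $4$), the only contributions to $\Hoh^2$ of the total space come from the base row $\Eoh_2^{p,0}=\Hoh^p(\B^2\mu_2,\ZZ)$. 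From the computation of $\Hoh^*(\B^2\mu_2,\ZZ)$ (via \cite{cartan1954} and universal coefficients) one has $\Hoh^0=\ZZ$, $\Hoh^1=0$, $\Hoh^2=\ZZ/2$ generated by a class I will call $q_1$, $\Hoh^3=0$, $\Hoh^4=\ZZ/4$. In the range $p+q=2$ there are no differentials into or out of $\Eoh_r^{2,0}$ (the incoming differential would come from $\Eoh_r^{2-r,r-1}$, which vanishes since the fiber has no cohomology in degrees $1,\dots$ until degree $4$, and the outgoing differential lands in positive fiber degree where everything in total degree $2$ is zero). Hence $\Eoh_\infty^{2,0}=\Eoh_2^{2,0}=\ZZ/2$, and all other $\Eoh_\infty^{p,q}$ with $p+q=2$ vanish, so $\Hoh^2(\B\SL_{2n}/\mu_2,\ZZ)\iso\ZZ/2$, generated by the pullback along the projection $\xi$ of the generator $q_1\in\Hoh^2(\B^2\mu_2,\ZZ)$.

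Next I would identify this generator concretely with the class of the map $\xi$ itself. By definition $\xi:\B\SL_{2n}/\mu_2\to\B^2\mu_2$ classifies the $\B\mu_2$-bundle (equivalently the $\mu_2$-gerbe) corresponding to the central extension $1\to\mu_2\to\SL_{2n}\to\SL_{2n}/\mu_2\to 1$; the homotopy class $[\xi]\in[\B\SL_{2n}/\mu_2,\B^2\mu_2]=\Hoh^2(\B\SL_{2n}/\mu_2,\ZZ/2)$ is exactly $\xi^*$ of the fundamental class $\iota_2\in\Hoh^2(\B^2\mu_2,\ZZ/2)$. Since the mod-$2$ fundamental class of $\B^2\mu_2$ is the reduction of (indeed, the Bockstein-partner of) the integral generator $q_1$, and since $\Hoh^2(\B\SL_{2n}/\mu_2,\ZZ/2)\iso\ZZ/2$ by the same spectral-sequence argument run with $\ZZ/2$ coefficients, the class $[\xi]$ is the nonzero element of $\Hoh^2(\B\SL_{2n}/\mu_2,\ZZ/2)$ precisely when $\xi^*\iota_2\neq 0$, i.e.\ precisely when $\xi$ is \emph{not} null-homotopic. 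So the statement reduces to showing $\xi$ is nontrivial.

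Finally, to see $\xi$ is not null-homotopic, I would argue that a null-homotopy would force the central extension $1\to\mu_2\to\SL_{2n}\to\SL_{2n}/\mu_2\to 1$ to split after passing to classifying spaces, equivalently would produce a section of $\B\SL_{2n}\to\B\SL_{2n}/\mu_2$, equivalently would split the extension itself — but this extension is nonsplit (the preimage of a suitable order-$2$ element of $\SL_{2n}/\mu_2$ is a cyclic group of order $4$, e.g.\ generated by $\diag(i,\dots,i)$ when $2n\equiv 2\pmod 4$, or one may simply note $\pi_1(\B\SL_{2n}/\mu_2)\iso\mu_2$ while $\B\SL_{2n}$ is simply connected so any section would be null on $\pi_1$, a contradiction with being a section). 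Alternatively, and perhaps more cleanly for the writeup, I would invoke the tail of the long exact sequence: $\B^2\mu_2$ receives $\xi$, and the fiber sequence shows $\pi_2(\B\SL_{2n}/\mu_2)\to\pi_2(\B^2\mu_2)=\mu_2$ is surjective with the composite $\pi_2(\B\SL_{2n}/\mu_2)\to\pi_1(\B\SL_{2n})=0$ trivial, so in fact $\pi_1$ of the total space is $\mu_2$ and $\xi$ realizes the identity on it after one further loop — concretely $\pi_1(\SL_{2n}/\mu_2)=\mu_2$ maps isomorphically under the connecting map to $\pi_1(\B\mu_2)=\mu_2$. Either way $\xi^*$ is surjective onto $\Hoh^2(\B\SL_{2n}/\mu_2,\ZZ/2)\iso\ZZ/2$, hence $[\xi]$ generates, and by the integral Bockstein the same holds with $\ZZ$-coefficients. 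The main obstacle I anticipate is purely bookkeeping: being careful that no differential in Figure~\ref{fig:1} disturbs the entry $\Eoh_2^{2,0}$ (the only potentially relevant differential, $d_5$ out of $\Hoh^4(\B\SL_{2n},\ZZ)$, lands in total degree $5$ and so is irrelevant here), and matching the two candidate descriptions of the generator — as $\xi^*q_1$ and as the class of $\xi$ — which is really just unwinding the definition of the classifying map of a central extension.
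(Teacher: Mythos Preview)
Your proof is correct and follows the paper's approach: both use the Serre spectral sequence of $\B\SL_{2n}\to\B\SL_{2n}/\mu_2\to\B^2\mu_2$ to see that $\xi^*$ is an isomorphism on $\Hoh^2$, and then identify the class of $\xi$ as $\xi^*\iota$. Your third paragraph is redundant, however: once the spectral sequence tells you $\xi^*$ is an isomorphism on $\Hoh^2(-,\ZZ/2)$, the identity $[\xi]=\xi^*\iota_2$ with $\iota_2\neq 0$ already gives $[\xi]\neq 0$, so no separate argument for the non-nullity of $\xi$ is needed --- this is exactly how the paper dispatches the lemma in two lines.
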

\begin{proof}
 By considering Figure \ref{fig:1}, we see that $\xi$ induces an isomorphism on $\Hoh^{\le 3}(-, \ZZ)$. It follows
 that it induces an isomorphism on $\Hoh^2(-, \ZZ/2)$, but $\Hoh^2(\B^2 \mu_2, \ZZ/2) = \iota\ZZ/2$, where $\iota$
 is represented by the identity map $\B^2 \mu_2 \to \B^2 \mu_2$. The result follows.
\end{proof}

We concern ourselves with the calculation of the $d_5$ differential by comparison with the case of the (compact) maximal torus of
diagonal unitary matrices
$ST_{2n} \subset \SL_{2n}$. Write $T_{2n}$ for the (compact) maximal torus of diagonal unitary matrices in $\GL_{2n}$. The following
descriptions are well known:
\begin{align*}
 \Hoh^*(\B T_{2n} , \ZZ) & \iso \ZZ[ \theta_1, \dots, \theta_{2n}], \quad |\theta_i| = 2, \\
 \Hoh^*(\B \GL_{2n}, \ZZ) & \iso \ZZ[ c_1, \dots, c_{2n}] \subset \ZZ[ \theta_1, \dots, \theta_{2n}], \quad c_i =
 \sigma_i(\theta_1, \dots, \theta_{2n}), \\
 \Hoh^*(\B ST_{2n}, \ZZ) & \iso \ZZ[\theta_1, \dots, \theta_{2n}]/(\theta_1 + \theta_2 + \dots + \theta_{2n}), \\
 \Hoh^*(\B \SL_{2n}, \ZZ) & \iso \ZZ[c_2, \dots, c_{2n}].
\end{align*}
A proof of the relation between $\Hoh^*(\B T_{2n} , \ZZ)$ and $\Hoh^*(\B \GL_{2n}, \ZZ)$ appears as
\cite[Thm.\ 18.3.2]{husemoller1994}, provided we recall that the noncompact groups appearing here are weakly equivalent to the
compact groups appearing there. The reduction to the special linear case is an easy Serre spectral sequence argument.

The cohomology $ \Hoh^*(\B ST_{2n}, \ZZ)$ is the polynomial algebra on $2n-1$ generators, which we may as well take to be
the images of $\theta_1, \dots, \theta_{2n-1}$, all lying in degree $2$. There
is a reduced augmentation map 
\[ \phi:\Hoh^2(\B ST_{2n}, \ZZ) \to \ZZ/2\] given by $\theta_i \mapsto 1$ for all $i$.

There is a comparison of fiber sequences
\begin{equation} \label{eq:compSeq}
\xymatrix{ \B ST_{2n} \ar[r] \ar[d] & \B ST_{2n} / \mu _2 \ar[d] \ar[r] & \B^2 \mu_2 \ar[d] \\ \B \SL_{2n} \ar[r] & \B \SL_{2n}/ \mu_2 \ar[r] & \B^2 \mu_2. }
\end{equation}

\begin{lemma}
 The map $\B ST_{2n} \to \B ST_{2n}/\mu_2$ identifies $\Hoh^*(\B ST_{2n}/
\mu_2, \ZZ)$ with the subring of $\Hoh^*(\B ST_{2n}, \ZZ)$ generated by 
$\ker (\phi:\Hoh^2(\B ST_{2n}, \ZZ/2) \to \ZZ/2)$.
\end{lemma}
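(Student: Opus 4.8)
The plan is to reduce the statement entirely to linear algebra of character lattices. Both $ST_{2n}$ and $ST_{2n}/\mu_2$ are compact tori of rank $2n-1$, so their classifying spaces are homotopy equivalent to products of copies of $\CC P^\infty$; consequently $\Hoh^*(\B ST_{2n},\ZZ)$ and $\Hoh^*(\B ST_{2n}/\mu_2,\ZZ)$ are polynomial rings, each canonically the symmetric $\ZZ$-algebra $\mathrm{Sym}_\ZZ$ on its own degree-two cohomology, the latter being naturally identified with the character lattice of the torus in question. Writing $\pi\colon ST_{2n}\to ST_{2n}/\mu_2$ for the quotient map, the lemma therefore amounts to two claims: that the image of $\B\pi^*\colon\Hoh^2(\B ST_{2n}/\mu_2,\ZZ)\to\Hoh^2(\B ST_{2n},\ZZ)$ is exactly $\ker\phi$, and that $\B\pi^*$ is injective in all degrees.

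First I would recall the standard dictionary: for a torus $T$ one has a natural isomorphism $\Hoh^2(\B T,\ZZ)\cong\widehat T=\Hom(T,S^1)$ and $\Hoh^*(\B T,\ZZ)\cong\mathrm{Sym}_\ZZ\widehat T$, under which a surjection of tori $\pi\colon T\to T'$ induces on $\Hoh^2$ the pullback of characters $\widehat{T'}\hookrightarrow\widehat T$, with image $\{\chi\in\widehat T:\chi|_{\ker\pi}=1\}$. Here $\ker\pi=\mu_2=\{\pm I_{2n}\}\subset ST_{2n}$ (note $-I_{2n}$ indeed has determinant $(-1)^{2n}=1$). A character of $ST_{2n}$ is the class of $\sum_i a_i\theta_i$ modulo the relation $\theta_1+\dots+\theta_{2n}=0$ defining $\Hoh^*(\B ST_{2n},\ZZ)$, and it sends $-I_{2n}$ to $(-1)^{\sum_i a_i}$; hence it descends to $ST_{2n}/\mu_2$ precisely when $\sum_i a_i$ is even. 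That is exactly the kernel of the integral augmentation $\theta_i\mapsto 1$. (In the statement $\phi$ is written with $\ZZ/2$ coefficients; since $\Hoh^*(\B ST_{2n},\ZZ)$ vanishes in odd degrees, reduction mod $2$ identifies $\Hoh^2(\B ST_{2n},\ZZ/2)$ with $\Hoh^2(\B ST_{2n},\ZZ)/2$ and satisfies $\phi_\ZZ=\phi_{\ZZ/2}\circ(\mathrm{red})$, so the preimage of $\ker(\phi\colon\Hoh^2(\B ST_{2n},\ZZ/2)\to\ZZ/2)$ under reduction is the integral sublattice just described; the two formulations agree.) This gives $\mathrm{image}(\B\pi^*)\cap\Hoh^2=\ker\phi$.

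To finish, I would note that $\B\pi^*$ is a ring homomorphism whose source is generated as a ring by its degree-two part, which $\B\pi^*$ carries isomorphically onto $\ker\phi$; therefore $\mathrm{image}(\B\pi^*)$ is the subring of $\Hoh^*(\B ST_{2n},\ZZ)$ generated by $\ker\phi$, as claimed. For injectivity: $\B\pi^*$ in degree two is the inclusion of a finite-index sublattice $M\hookrightarrow N$ of lattices of the same rank $2n-1$, so $\mathrm{Sym}_\ZZ M\to\mathrm{Sym}_\ZZ N$ becomes an isomorphism after $\otimes_\ZZ\QQ$; since $\mathrm{Sym}_\ZZ M$ is torsion-free it injects into $\mathrm{Sym}_\ZZ M\otimes_\ZZ\QQ$, and hence $\B\pi^*$ is injective. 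This identifies $\Hoh^*(\B ST_{2n}/\mu_2,\ZZ)$ with the subring of $\Hoh^*(\B ST_{2n},\ZZ)$ generated by $\ker\phi$.

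I do not anticipate a genuine obstacle: the content is a character-lattice computation together with the standard description of the cohomology of classifying spaces of tori. The only points demanding care are bookkeeping ones — pinning down the duality convention $\Hoh^2(\B T,\ZZ)\cong\widehat T$, checking $-I_{2n}\in ST_{2n}$, and reconciling the integral and $\ZZ/2$ versions of $\phi$ as above. A Serre spectral sequence argument for $\B ST_{2n}\to\B ST_{2n}/\mu_2\to\B^2\mu_2$, parallel to Figure~\ref{fig:1}, would also work, but the lattice computation is more transparent and exhibits the ring structure directly.
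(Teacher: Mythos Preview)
Your argument is correct and complete. It differs from the paper's proof in the key step identifying the image of $\B\pi^*$ in degree two. The paper runs the Serre spectral sequence of the fibration $\B\mu_2 \to \B ST_{2n} \to \B ST_{2n}/\mu_2$ to obtain the short exact sequence
\[
0 \to \Hoh^2(\B ST_{2n}/\mu_2,\ZZ) \to \Hoh^2(\B ST_{2n},\ZZ) \xrightarrow{\iota^*} \Hoh^2(\B\mu_2,\ZZ) \to 0,
\]
and then uses the permutation action of $\Sigma_{2n}$ on $T_{2n}$, which is transitive on the $\theta_i$ but trivial on $\Hoh^2(\B\mu_2,\ZZ)$, to force $\iota^*(\theta_i)=\iota^*(\theta_j)$ for all $i,j$ and hence $\iota^*=\phi$. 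You instead invoke the standard dictionary $\Hoh^2(\B T,\ZZ)\cong\widehat T$ and evaluate characters on $-I_{2n}$ directly, which is more elementary and sidesteps both the spectral sequence and the symmetry trick; it also makes the ring structure transparent and lets you spell out injectivity via rationalization, a point the paper leaves implicit. The paper's route, on the other hand, is in keeping with the spectral-sequence machinery already set up for the surrounding computations. Your closing remark that a Serre spectral sequence argument parallel to Figure~\ref{fig:1} would also work is in fact exactly what the paper does.
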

\begin{proof}
 Since $ST_{2n} / \mu_2$ is a compact connected abelian Lie group of dimension $2n-1$, it is again a torus. Therefore,
 both $\Hoh^*(\B ST_{2n}, \ZZ)$ and $\Hoh^*(\B ST_{2n} / \mu_2, \ZZ)$ are polynomial rings over $\ZZ$ on $2n-1$
 generators in dimension $2$. The assertion of the lemma reduces to the claim that $\Hoh^2(\B ST_{2n}/\mu_2 , \ZZ) \to
 \Hoh^2(\B ST_{2n}, \ZZ)$ is exactly $\ker \phi$.

 Consider the inclusion maps $\mu_2 \overset{\iota}{\to} ST_{2n} \to T_{2n}$. These are compatible with the $\Sigma_{2n}$ action
 on $T_{2n}$, given by permuting factors in $T_{2n}$. The induced action of $\Sigma_{2n}$ is transitive on the classes
 $\theta_i \in \Hoh^2(\B T_{2n}, \ZZ)$, and therefore also on their reductions to $\Hoh^2(\B ST_{2n}, \ZZ)$, while
 $\Sigma_{2n}$ acts trivially on $\Hoh^2(\B \mu_2, \ZZ)$. We deduce that $\iota^*( \theta_i ) = \iota^*(\theta_j)$
 where $1 \le i,j \le 2n-1$.

 \begin{figure}[h]
 \centering
 \[ \xymatrix@R=5pt@C=10pt{ \Hoh^2(\B \mu_2, \ZZ) \iso \ZZ/2 \\ 0 \\ \ZZ & 0 & \Hoh^2(\B ST_{2n}/\mu_2, \ZZ) & 0 } \]
 \caption{A portion of the Serre spectral sequence in integral cohomology associated to $\B \mu_2 \to \B ST_{2n} \to \B ST_{2n}/\mu_2$.}
 \label{fig:2augLemma}
 \end{figure}
 Now we examine the Serre spectral sequence associated to the fiber sequence $\B \mu_2 \to \B ST_{2n} \to \B ST_{2n}
 /\mu_2$, as given in Figure \ref{fig:2augLemma}. It collapses in the illustrated range, and we are left with an
 extension problem
 \[ 0 \to \Hoh^2(\B ST_{2n}/ \mu_2, \ZZ) \to \Hoh^2(\B ST_{2n}, \ZZ) \overset{\iota^*}{\to} \Hoh^2( \B \mu_2, \ZZ)
 \to 0. \]

 Since $\iota^* \neq 0$, it follows that $\iota^*(\theta_i) = 1$ for at least one, and therefore for all, values of $i
 \in \{0, \dots, 2n-1\}$. Therefore, $\iota^* = \phi$, and the result follows.
\end{proof}

Explicitly, one may present $\Hoh^*(\B ST_{2n}/ \mu_2, \ZZ)$ as the subring of $\Hoh^*(\B ST_{2n}, \ZZ)$ generated by
the reductions modulo the ideal $(\theta_1 + \theta_2 + \dots + \theta_{2n})$ of $\{2 \theta_1\} \cup \{ \theta_i -
\theta_1 \}_{i=2}^{2n}$.

For convenience, we write $\Hoh^*(\B ST_{2n}, \ZZ) = \ZZ[ x_1, y_2, \dots, y_{2n-1}]$, where $x_1$ is the reduction
modulo the ideal $(\theta_1 + \theta_2 + \dots + \theta_{2n})$
of $\theta_1$, and $y_i$ is the reduction modulo the ideal $(\theta_1 + \theta_2 + \dots + \theta_{2n})$ of $\theta_i - \theta_1$. Observe that the reduction of $\theta_{2n}$ modulo
$(\theta_1 + \theta_2 + \dots + \theta_{2n})$ is $-(2n-1)x_1-\sum_{i=2}^{2n-1} y_i$. In this notation, $\Hoh^*(\B ST_{2n}/ \mu_2, \ZZ) = \ZZ[2x_1, y_2, \dots, y_{2n-1}]$.

\begin{figure}[h]
 \centering
\[ \xymatrix@R=5pt@C=10pt{ \displaystyle x_1^2 \ZZ \oplus \bigoplus_{i=1}^{2n-1} x_1y_i \ZZ \oplus \bigoplus_{i \le j } y_i y_j \ZZ
 \ar@/^4em/^{d_5}[ddddrrrrr] \ar_{d_3}[ddrrr]
 \\ 0 \\ \displaystyle x_1 \ZZ \oplus \bigoplus_{i=2}^{2n-1} y_i \ZZ \ar_{d_3}[ddrrr] & & & (\ZZ/2)^{2n-1} \\ 0 \\ \ZZ & 0 & 0 & q_1\ZZ/2 & 0 & q_2\ZZ/4 }
\]
 \caption{A portion of the Serre spectral sequence in integral cohomology associated to $\B ST_{2n} \to \B ST_{2n} / \mu_2 \to \B^2\mu_2$.}
 \label{fig:2}
\end{figure}

Since the spectral sequence of Figure~\ref{fig:2} converges to $\ZZ[2x_1, y_2, \dots, y_{2n-1}]$, we deduce that
$x_1^2 \ZZ \cap
\mathrm{E}_\infty^{0,4} = 4x_1^2 \ZZ$, and since $d_3(x_1^2) = 2 x_1 d_3(x_1) = 0$, it follows that $d_5(x_1^2) = q_2$ up to a negligeable
choice of sign.

We turn to the calculation of the $d_5$ differential in the spectral sequence of Figure \eqref{fig:1}.

\begin{lemma} \label{lem:d5c2}
 In the spectral sequence of Figure~\ref{fig:1}, $d_5(c_2) = \gcd(n, 4) q_2$, up to a negligeable choice of sign.
\end{lemma}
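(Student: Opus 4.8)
The plan is to determine $d_5(c_2)$ by naturality with respect to the comparison of fiber sequences~\eqref{eq:compSeq}, transporting the question into the maximal-torus spectral sequence of Figure~\ref{fig:2}, where the relevant $d_5$ is already understood. In \eqref{eq:compSeq} the map on base spaces is the identity of $\B^2\mu_2$, so it induces the identity on $\Eoh_r^{5,0}=\Hoh^5(\B^2\mu_2,\ZZ)=q_2\ZZ/4$, while the map on fibers $\B ST_{2n}\to\B\SL_{2n}$ carries $c_2\in\Hoh^4(\B\SL_{2n},\ZZ)$ to its image $\bar c_2=\sigma_2(\bar\theta_1,\dots,\bar\theta_{2n})\in\Hoh^4(\B ST_{2n},\ZZ)$, where $\bar\theta_i$ is the reduction of $\theta_i$ modulo $(\theta_1+\dots+\theta_{2n})$. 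Before comparing differentials I would check that $c_2$ and $\bar c_2$ genuinely survive to the $\Eoh_5$-page: for $c_2$ this is immediate because the entries $\Eoh_2^{3,2}$ and $\Eoh_2^{4,1}$ vanish in Figure~\ref{fig:1} (as $\Hoh^1(\B\SL_{2n},\ZZ)=\Hoh^2(\B\SL_{2n},\ZZ)=0$), and for $\bar c_2$ it then follows by naturality together with $\Hoh^1(\B ST_{2n},\ZZ)=0$. Naturality of $d_5$ then yields $d_5(c_2)=d_5(\bar c_2)$ under the identification of the two copies of $q_2\ZZ/4$, so the lemma reduces to computing $d_5(\bar c_2)$.

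The heart of the argument is an explicit rewriting of $\bar c_2$. Using $\sum_i\bar\theta_i=0$ one has $\bar c_2=-\tfrac12\sum_i\bar\theta_i^2$, and substituting $\bar\theta_1=x_1$, $\bar\theta_i=x_1+y_i$ for $2\le i\le 2n-1$, and $\bar\theta_{2n}=-(2n-1)x_1-\sum_{i=2}^{2n-1}y_i$ (the presentation recorded above), I expect the result to take the shape
\[
\bar c_2=-\,n(2n-1)\,x_1^2\;+\;P(2x_1,\,y_2,\dots,y_{2n-1})
\]
for an integral polynomial $P$. The point of isolating the $x_1^2$-term is that $P(2x_1,y_2,\dots,y_{2n-1})$ lies in the subring $\ZZ[2x_1,y_2,\dots,y_{2n-1}]\subseteq\Hoh^*(\B ST_{2n},\ZZ)$, which was identified above with the image of $\Hoh^*(\B ST_{2n}/\mu_2,\ZZ)\to\Hoh^*(\B ST_{2n},\ZZ)$; being the restriction of a class from the total space it is a permanent cycle, hence annihilated by $d_5$. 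On the other hand $x_1^2$ survives to $\Eoh_5^{0,4}$, since $d_3(x_1^2)=2x_1\,d_3(x_1)=0$ because $\Hoh^3(\B^2\mu_2,\ZZ)$ is $2$-torsion, and $d_5(x_1^2)=q_2$ up to sign was established above. Therefore $d_5(\bar c_2)=-n(2n-1)\,q_2$ in $q_2\ZZ/4$.

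It remains to match $-n(2n-1)q_2$ with $\gcd(n,4)q_2$. Since $-n(2n-1)+n=-2n(n-1)$ is divisible by $4$, we have $-n(2n-1)q_2=-nq_2$ in $\ZZ/4$, and a quick case check modulo $4$ gives $-nq_2=\pm\gcd(n,4)q_2$: a unit multiple of $q_2$ when $n$ is odd, and exactly $\pm2q_2$ or $0$ when $n\equiv 2$ or $n\equiv 0\pmod 4$. This yields $d_5(c_2)=\gcd(n,4)q_2$ up to the advertised negligible sign.

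I do not anticipate a genuine obstacle here; the work is essentially bookkeeping. The two points requiring care are that naturality of a differential $d_r$ applies only to classes that actually survive to the $\Eoh_r$-page (hence the preliminary vanishing checks in Figure~\ref{fig:1}), and that the non-$x_1^2$ part of $\bar c_2$ must be located inside the subring $\ZZ[2x_1,y_2,\dots,y_{2n-1}]$---equivalently, inside the image of $\Hoh^*(\B ST_{2n}/\mu_2,\ZZ)$---rather than merely inside $\Hoh^*(\B ST_{2n},\ZZ)$, in order to conclude that it is a permanent cycle.
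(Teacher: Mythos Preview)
Your proposal is correct and follows essentially the same route as the paper: both compute $d_5(c_2)$ by pushing $c_2$ into the torus spectral sequence of Figure~\ref{fig:2} via the comparison~\eqref{eq:compSeq}, expand it in the coordinates $x_1,y_2,\dots,y_{2n-1}$ to isolate the coefficient $-n(2n-1)$ of $x_1^2$, and argue that the remaining terms are permanent cycles so only $d_5(x_1^2)=\pm q_2$ contributes. Your phrasing of the last step---identifying the non-$x_1^2$ part as lying in $\ZZ[2x_1,y_2,\dots,y_{2n-1}]=\operatorname{im}\bigl(\Hoh^*(\B ST_{2n}/\mu_2,\ZZ)\to\Hoh^*(\B ST_{2n},\ZZ)\bigr)$ and hence in $\Eoh_\infty^{0,*}$---is a clean way to see what the paper records as ``$d_5(y_i)=0$ and $d_5(2x_1)=0$''; and your explicit checks that $c_2$ and $\bar c_2$ survive to the $\Eoh_5$-page are a useful addition that the paper leaves implicit.
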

\begin{proof}
 There is a comparison map from the spectral sequence of Figure \ref{fig:1} to the spectral sequence of Figure~\ref{fig:2}.

 One has the following expansion in $\Hoh^*(\B ST_{2n}, \ZZ)$
\begin{align*}
 c_2 &= \sigma_2(\theta_1, \dots, \theta_{2n}) \\
 & = \sigma_2\Big( x_1, y_2 + x_1, \dots, y_{2n-1} + x_1, -(2n-1) x_1 - \sum_{i=2}^{2n-1} y_i \Big) \\
 & = \binom{2n-1}{2} x_1^2 - (2n-1)(2n-1) x_1^2 -2nx_1(y_2+ \dots + y_{2n-1}) + p(y_2, \dots, y_{2n-1}) \\
 & = -(2n-1)nx_1^2 -2nx_1(y_2+ \dots + y_{2n-1}) + p(y_2, \dots, y_{2n-1})
\end{align*}
where $p$ is a homogeneous polynomial of degree $2$.

Since $d_5(y_i) = 0$ and $d_5(2x_1) = 0$ in the sequence of Figure~\ref{fig:2}, it follows that $d_5(c_2) = -(2n-1)n
d_5(x_1^2) = u\gcd(n,4) q_2$, for some unit $u$, in the sequence of Figure~\ref{fig:2}, and by comparison also in that of Figure~\ref{fig:1}.
\end{proof}

\subsection{Obstructions to self maps of approximations to $\B \SL_{2n}/\mu_2$}

We remind the reader that a based space $(X, x_0)$ is $n$-\emph{connected} if it is connected and the homotopy groups $\pi_i(X, x_0)$ are
trivial for $i \le n$. In the sequel we drop the basepoints from the notation and all spaces will be assumed based. A
map of spaces $f: X \to Y$ is an $n$-equivalence if the homotopy fiber, $\hofib f$, is $(n-1)$-connected. In practice,
this means that $f_*: \pi_i(X) \to \pi_i(Y)$ is a bijection for $i < n$ and is surjective for $i=n$. If $f: X \to Y$ is
an $n$-equivalence, then $f^*: \Hoh^i(Y, R) \to \Hoh^i(X,R)$ is an isomorphism when $i< n$; this follows from 
\cite{whitehead2012}*{Thm.~IV.7.13} and the Universal Coefficient Theorem.

\begin{proposition} \label{pr:techProp}
Suppose $Y_1, Y_2$ are two spaces each equipped with $7$-equivalences $Y_1 \to \B\SL_{2n_1}/ \mu_2$, $Y_2 \to \B\SL_{2n_2}/ \mu_2$,
where the $n_i$ are not multiples of $4$, and $n_1 \ge 2$ is even. Any map $f: Y_1 \to Y_2$ inducing an isomorphism $f^*:\Hoh^2(Y_2 , \ZZ/2) \to \Hoh^2(Y_1,
\ZZ/2) \iso \ZZ/2$ induces an isomorphism $f^*: \Hoh^6(Y_2, \QQ) \to \Hoh^6(Y_1, \QQ)$.
\end{proposition}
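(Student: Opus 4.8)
The plan is to reduce the statement to a comparison of Serre spectral sequences; the one subtlety is that $f$ is not assumed compatible with the given maps $Y_i\to\B\SL_{2n_i}/\mu_2$, only with the composites to $\B^2\mu_2$, so I would first pass to the homotopy fibres of the latter. Let $\xi_i\colon Y_i\to\B^2\mu_2$ be the composite of the $7$-equivalence with the map $\xi$ of \eqref{eq:i}, classifying the nonzero class $\zeta_i\in\Hoh^2(Y_i,\ZZ/2)\iso\ZZ/2$. Since a map into $\B^2\mu_2=K(\ZZ/2,2)$ is determined up to homotopy by the class it classifies, the hypothesis $f^*\zeta_2=\zeta_1$ gives $\xi_2\circ f\simeq\xi_1$, so $f$ is a map over $\B^2\mu_2$. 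Put $F_i=\hofib(\xi_i)$. Comparing long exact sequences of homotopy groups with those of the fibration $\B\SL_{2n_i}\to\B\SL_{2n_i}/\mu_2\to\B^2\mu_2$ shows $F_i\to\B\SL_{2n_i}$ is again a $7$-equivalence, so $\Hoh^{<7}(F_i,R)\iso\Hoh^{<7}(\B\SL_{2n_i},R)$ for all $R$; in particular $\Hoh^4(F_i,\ZZ)=\ZZ c_2$, $\Hoh^6(F_i,\ZZ)=\ZZ c_3$, and $\Hoh^6(F_i,\ZZ/2)=\ZZ/2\,\bar c_3$. As $\B^2\mu_2$ is rationally acyclic, $F_i\to Y_i$ is a rational homology equivalence, so it suffices to show that the induced map $\bar f^*\colon\Hoh^6(F_2,\QQ)\to\Hoh^6(F_1,\QQ)$ of one-dimensional spaces is nonzero.

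Being a map over $\B^2\mu_2$, $f$ induces a morphism of Serre spectral sequences $\Eoh_r(Y_2)\to\Eoh_r(Y_1)$ of the fibrations $F_i\to Y_i\to\B^2\mu_2$; it is $\bar f^*$ on the fibre edge $\Eoh_2^{0,*}=\Hoh^*(F_i,\ZZ)$ and the identity on the base edge $\Eoh_2^{*,0}=\Hoh^*(\B^2\mu_2,\ZZ)$. Transporting Lemma~\ref{lem:d5c2} and Figure~\ref{fig:1} through the $7$-equivalences (which are maps over $\B^2\mu_2$), the class $c_2\in\Eoh_5^{0,4}(Y_i)$ is transgressive with $d_5(c_2)=\gcd(n_i,4)\,q_2$ in $\Eoh_5^{5,0}(Y_i)=\Hoh^5(\B^2\mu_2,\ZZ)\iso\ZZ/4$. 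Writing $\bar f^*(c_2)=\lambda c_2$, compatibility with $d_5$ together with $\bar f^*=\id$ on the base gives $\lambda\,\gcd(n_1,4)\equiv\gcd(n_2,4)\pmod4$. Since $4\nmid n_i$ forces $\gcd(n_i,4)\in\{1,2\}$, the right-hand side is nonzero in $\ZZ/4$, so $\lambda\neq0$ and $\bar f^*$ is already an isomorphism on $\Hoh^4(-,\QQ)$; moreover a check of the few cases shows that the congruence makes $\lambda$ odd whenever $\gcd(n_1,4)=\gcd(n_2,4)$ — in particular whenever $n_1\equiv2\pmod4$ (the case $n_1=2$ needed in Section~\ref{section:back-to-algebra}) or when $n_1,n_2$ are both odd.

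To reach degree $6$ I would use the Wu formula $\Sq^2c_2=c_1c_2+c_3=c_3$ on $\B\SL_{2n}$ (the first term vanishing as $c_1=0$), which gives $\Sq^2\bar c_2=\bar c_3$ in $\Hoh^6(F_i,\ZZ/2)$. Since $\bar f^*$ commutes with $\Sq^2$ and $\bar f^*(\bar c_2)=\bar\lambda\,\bar c_2$,
\[
\bar f^*(\bar c_3)=\Sq^2(\bar\lambda\,\bar c_2)=\bar\lambda\,\bar c_3 .
\]
Writing $\bar f^*(c_3)=\mu c_3$ in $\Hoh^6(F_i,\ZZ)=\ZZ c_3$ and comparing mod-$2$ reductions yields $\mu\equiv\lambda\pmod2$, so if $\lambda$ is odd then $\mu$ is odd, hence nonzero, and $\bar f^*$ — therefore $f^*$ — is an isomorphism on $\Hoh^6(-,\QQ)$. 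I expect the real work to lie in the first paragraph: checking that $F_i\to\B\SL_{2n_i}$ is a $7$-equivalence, that the computation of $d_5$ in Lemma~\ref{lem:d5c2} transports to the fibration $F_i\to Y_i\to\B^2\mu_2$, and that $\bar f^*$ restricts to the identity on its base — it is exactly this last point that pins down $\lambda$ modulo $4$. The degree-six step then needs only the classical Wu relation and naturality of $\Sq^2$, but it does require $\lambda$ odd, which is where $4\nmid n_i$ (and, in the application, $n_1=2$) is used.
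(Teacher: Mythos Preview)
Your approach is essentially identical to the paper's: pass to the homotopy fibres $Y_i'$ over $\B^2\mu_2$, compare the Serre spectral sequences using Lemma~\ref{lem:d5c2} to see that $f'^*$ on $\Hoh^4$ is multiplication by an odd integer, then apply $\Sq^2\bar c_2=\bar c_3$ and the rational acyclicity of $\B^2\mu_2$ to conclude for $\Hoh^6$. You are in fact more careful than the paper in isolating exactly when the $d_5$-congruence forces $\lambda$ to be odd; the paper simply asserts this, whereas you correctly note it follows from $\gcd(n_1,4)=\gcd(n_2,4)$ (or from the impossible congruence when $n_1\equiv 2$ and $n_2$ is odd), which covers the case $n_1=2$ needed downstream.
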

\begin{proof}
 Let $Y_i'$ denote the homotopy fibers of the composite maps $Y_i \to \B\SL_{2n_i}/ \mu_2 \to \B^2 \mu_2$. These are $7$-equivalent
 to $\B\SL_{2n_i}$ by comparison with the case of $\B\SL_{2n_i} \to \B\SL_{2n_i}/\mu_2 \to \B^2 \mu_2$. The low-degree parts of the Serre spectral
 sequences in integral cohomology associated to $Y_i' \to Y_i \to \B^2 \mu_2$ necessarily take the form shown in Figure \ref{fig:3}.
\begin{figure}[h]
 \centering
\[ \xymatrix@R=5pt@C=10pt{ \ZZ = \Hoh^4(Y_i', \ZZ) \ar^{d_5}[ddddrrrrr] \\ 0 \\ 0 \\ 0 \\ \ZZ & 0 & 0 & q_1\ZZ/2 & 0 & q_2\ZZ/4 }
\]
 \caption{A portion of the Serre spectral sequence in integral cohomology associated to $Y_i' \to Y_i \to \B^2\mu_2$.}
 \label{fig:3}
\end{figure}
This coincides with the Serre spectral sequence associated to $\B\SL_{2n_i} \to \B\SL_{2n_i}/\mu_2 \to \B^2 \mu_2$,
which is presented in Figure \ref{fig:1}.

By comparison with the case of $\B\SL_{2n_i} \to \B\SL_{2n_i}/\mu_2 \to \B^2 \mu_2$ and use of Lemma \ref{lem:d5c2}, the differential $d_5$ takes a generator to $2 q_2$ when $i=1$.

The fact that $f$ induces an isomorphism on $\Hoh^2(Y_i, \ZZ/2)$
for $i\in \{1,2\}$ implies that there is a homotopy-commutative diagram
\begin{equation}\label{EQ:fibered-seq-diagram}
 \xymatrix{ Y_1' \ar^{f'}[d] \ar[r] & Y_1 \ar^f[d] \ar[r] & \B^2\mu_2 \ar[d] \\
Y_2 ' \ar[r] & Y_2 \ar[r] & \B^2\mu_2 } 
\end{equation}
and an attendant map of spectral sequences which induces the identity on $\mathrm{E}_2^{\ast, 0}$. It follows by
considering the $d_5$ differential in Figure \ref{fig:3} that $f'^* : \Hoh^4( Y_2'
, \ZZ) \to \Hoh^4(Y_1', \ZZ)$ is multiplication by an odd integer, and so $f'^*: \Hoh^4(Y_2', \ZZ/2) \to \Hoh^4(Y_1',
\ZZ/2)$ is an isomorphism.

We have $\Hoh^4(\B\SL_{2n_i}, \ZZ/2) = \bar c_2 \ZZ/2 $, where $\bar x$ denotes the reduction modulo $2$ of the integral
cohomology class $x$. We now make use of the action of the Steenrod algebra on $\ZZ/2$ cohomology. The properties of
this action may be found in \cite[\S.\ 4L]{hatcher}. We compute the action of the Steenrod algebra on $\Hoh^*(\B
\SL_{2n}, \ZZ/2)$ by comparison with $\Hoh^*(\B T_{2n}, \ZZ/2)$. In $\Hoh^*(\B T_{2n}, \ZZ/2)$, one has $\Sq^2 \bar
\theta_i = \bar\theta_i^2$ for all $i$, and using the axioms, we calculate that \[ \Sq^2 \sigma_2(\bar \theta_1, \dots, \bar
\theta_{2n_i}) = \sigma_1(\bar \theta_1, \dots, \bar \theta_{2n_i}) \sigma_2(\bar \theta_1, \dots, \bar \theta_{2n_i}) +
\sigma_3(\bar \theta_1, \dots, \bar \theta_{2n_i}).\] Therefore, upon reducing to the case of $\B \SL_{2n_i}$, we obtain
$\mathrm{Sq}^2 \bar c_2 = \bar c_3$, the reduction of the third Chern class.

By naturality of the Steenrod operations, it follows that
$f'^*: \Hoh^6(Y_2' , \ZZ/2) \to \Hoh^6(Y_1', \ZZ/2)\iso \ZZ/2$ is an isomorphism. Since $\Hoh^6(Y_i', \ZZ/2)$ is, in
each case, the reduction modulo $2$ of the free abelian group $\Hoh^6(Y_i, \ZZ) \iso \ZZ$, it follows that the natural maps $\Hoh^6(Y_i , \QQ ) \to \Hoh^6(Y_i', \QQ)$ are isomorphisms

Using the Serre spectral sequence, for instance, one deduces that $\Hoh^*(\B \Gamma, \QQ) \iso \Hoh^*(\B \Gamma/\mu_2,
\QQ)$ where $\Gamma$ is a topological group containing $\mu_2$ as a subgroup, so $f'^*: \Hoh^6(Y_2', \QQ) \to
\Hoh^6(Y_1', \QQ)$ is also an isomorphism. That $f^*:\Hoh^6(Y_2,\QQ)\to \Hoh^6(Y_1,\QQ)$ is an isomorphism
now follows 
from the previous paragraph and left square of diagram \eqref{EQ:fibered-seq-diagram}.
\end{proof}

\begin{proposition} \label{something}
 Suppose $Y \to \B\SL_{4}/\mu_2$ is a $7$-equivalence. Let $\eta : Y \to \B^2\mu_2$ denote the composite $\xi \circ (Y \to
 \B\SL_{2n}/\mu_2)$. Let $G$ be one of the groups $\Sp_{2n}/\mu_2$ or $\SOg_{2n}/\mu_2$, where $n$ is not divisible by $4$. There is no map
 $f:Y \to \B G$ making the following diagram commute in homotopy
 \[ \xymatrix{ Y \ar^f[rr] \ar_\eta[dr] & & \B G \ar^{\xi_{?}}[dl]\\ & \B^2 \mu_2 }. \]
\end{proposition}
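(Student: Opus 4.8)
The plan is to argue by contradiction and reduce everything to Proposition~\ref{pr:techProp}, which was set up for precisely this situation. Suppose there were a map $f\colon Y\to\B G$ with $\xi_{?}\circ f\simeq\eta$, where $G=\Sp_{2n}/\mu_2$ or $\SOg_{2n}/\mu_2$ and $4\nmid n$. First I would post-compose $f$ with the natural map $\B G\to\B\SL_{2n}/\mu_2$---which exists because $\Sp_{2n}\subset\SL_{2n}$ and, by Remark~\ref{rem:iv}, $\SOg_{2n}\subset\SL_{2n}$---to obtain a map $g\colon Y\to\B\SL_{2n}/\mu_2$. Since $\xi_{?}$ is by definition the composite $\B G\to\B\SL_{2n}/\mu_2\xrightarrow{\xi}\B^2\mu_2$ from~\eqref{eq:i}, we get $\xi\circ g\simeq\xi_{?}\circ f\simeq\eta$. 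Now $\eta$ induces an isomorphism on $\Hoh^2(-,\ZZ/2)\iso\ZZ/2$: this is because $Y\to\B\SL_4/\mu_2$ is a $7$-equivalence and the map $\xi\colon\B\SL_4/\mu_2\to\B^2\mu_2$ does so by Lemma~\ref{lem:H2lem} and the Universal Coefficient Theorem. Since $\xi\colon\B\SL_{2n}/\mu_2\to\B^2\mu_2$ likewise induces an isomorphism on $\Hoh^2(-,\ZZ/2)$, it follows that $g$ induces an isomorphism on $\Hoh^2(-,\ZZ/2)$.

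I would then invoke Proposition~\ref{pr:techProp} with $Y_1=Y$ (carrying its given $7$-equivalence to $\B\SL_4/\mu_2$, so $n_1=2\geq 2$ and $4\nmid n_1$) and $Y_2=\B\SL_{2n}/\mu_2$ (carrying the identity $7$-equivalence, so $n_2=n$ with $4\nmid n$). Applied to $g$, it gives that $g^*\colon\Hoh^6(\B\SL_{2n}/\mu_2,\QQ)\to\Hoh^6(Y,\QQ)$ is an isomorphism. On the other hand, $g$ factors through $\B G$, so $g^*$ on $\Hoh^6(-,\QQ)$ factors through $\Hoh^6(\B G,\QQ)$, and it suffices to show that the restriction $\Hoh^6(\B\SL_{2n}/\mu_2,\QQ)\to\Hoh^6(\B G,\QQ)$ is the zero map. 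Since passing to a quotient by $\mu_2$ leaves rational cohomology unchanged, this map is identified with $\Hoh^6(\B\SL_{2n},\QQ)\to\Hoh^6(\B\widetilde G,\QQ)$, where $\widetilde G=\Sp_{2n}$ or $\SOg_{2n}$; its source is spanned by the third Chern class $c_3$ (and is zero if $n=1$), and $c_3$ restricts to zero because the odd Chern classes of symplectic bundles and of complexifications of real bundles are $2$-torsion, hence rationally trivial. Thus $g^*=0$ on $\Hoh^6(-,\QQ)$, whereas $\Hoh^6(Y,\QQ)\iso\Hoh^6(\B\SL_4/\mu_2,\QQ)\iso\Hoh^6(\B\SL_4,\QQ)=\QQ c_3\neq 0$; so $g^*$ cannot be both zero and an isomorphism---the desired contradiction.

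The real content here is Proposition~\ref{pr:techProp}, which is already available, so I do not anticipate a genuine obstacle. The points that need care are all a matter of arranging hypotheses: verifying that $g$ really induces an isomorphism on $\Hoh^2(-,\ZZ/2)$ (without which Proposition~\ref{pr:techProp} does not apply), checking that the approximation data $Y_1=Y$, $Y_2=\B\SL_{2n}/\mu_2$ satisfy $4\nmid n_i$ and $n_1\ge 2$, and the standard rational computation of the restriction maps $\Hoh^*(\B\SL_{2n},\QQ)\to\Hoh^*(\B\Sp_{2n},\QQ)$ and $\Hoh^*(\B\SL_{2n},\QQ)\to\Hoh^*(\B\SOg_{2n},\QQ)$ in degree $6$.
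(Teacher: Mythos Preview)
Your proposal follows the paper's argument essentially step for step: assume $f$ exists, post-compose with $\B G\to\B\SL_{2n}/\mu_2$ to obtain $g$, verify via Lemma~\ref{lem:H2lem} that $g$ induces an isomorphism on $\Hoh^2(-,\ZZ/2)$, apply Proposition~\ref{pr:techProp} with $Y_1=Y$ and $Y_2=\B\SL_{2n}/\mu_2$ to force $g^*$ to be an isomorphism on $\Hoh^6(-,\QQ)\cong\QQ$, and then derive a contradiction from the factorization of $g^*$ through $\Hoh^6(\B G,\QQ)$.

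The one substantive difference is in how the contradiction is obtained at the end. The paper asserts that $\Hoh^6(\B G,\QQ)=0$ in both the symplectic and orthogonal cases, citing the known integral cohomology of $\B\Sp_{2n}$ and $\B\SOg_{2n}$. This is correct for $\Sp_{2n}$, but for $\SOg_{2n}$ it overlooks the Euler class: when $n=3$ one has $\Hoh^6(\B\SOg_6,\QQ)=\QQ e\neq 0$. You instead prove the weaker statement actually needed, namely that the restriction map $\Hoh^6(\B\SL_{2n},\QQ)\to\Hoh^6(\B\widetilde G,\QQ)$ kills $c_3$, using that odd Chern classes of self-dual complex bundles (symplectic or complexified-real) are $2$-torsion. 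That suffices for the contradiction and covers all $n$ uniformly, so your variant is in effect a small repair of the paper's argument in the orthogonal case $n=3$.
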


\begin{proof}
 The cohomology $\Hoh^*(\B \Sp_{2n}, \ZZ)$ is a polynomial ring on classes in dimensions $\{4,8,\dots, 4n\}$; this
 calculation is classical and is to be found in \cite{borel1953} following Proposition 29.2. The cohomology ring $\Hoh^*(\B
 \SOg_{2n}, \ZZ)$ is calculated in \cite{brown1982}. In each case, $\Hoh^6( \B G, \ZZ)$ is torsion, so it follows in
 our situation that $\Hoh^6(\B G, \QQ) = 0$.

 Now we turn to disproving the existence of a map $f$. If there were such a map, then by composing $Y \to \B G$ with the
 appropriate map in Diagram \eqref{eq:i}, we could construct
 a map $g: Y \to\B G \to \B \SL_{4}/\mu_2$ over $\B^2 \mu_2$. 
 By Lemma~\ref{lem:H2lem}, we know that both $\eta: Y\to \B^2\mu_2$ and $\B\SL_4/\mu_2\to \B^2\mu_2$ represent a generator of
 $\Hoh^2(-, \ZZ/2)$, and so $g^*:\Hoh^2(\B\SL_{4}/\mu_2,\ZZ/2)\to \Hoh^2(Y,\ZZ/2)$ is an isomorphism.
 By Proposition \ref{pr:techProp}, the map $g$ induces an isomorphism $\Hoh^6( \B
 \SL_{2n}/\mu_2, \QQ) \to \Hoh^6( Y, \QQ) \iso \QQ$, factoring through $\Hoh^6(\B G, \QQ) \iso 0$, a contradiction.
\end{proof}

\section{Back to Algebra}
\label{section:back-to-algebra}

In this section, the notation reverts to $\SL_{2n}, \GL_{2n}$ for a
group scheme and $\SL_{2n}(\CC), \GL_{2n}(\CC)$ for the Lie group of
complex points.  We can now prove the main Theorem~\ref{TH:main},
which we rephrase.

\begin{theorem}\label{TH:no-involution}
There exists a nonsingular affine variety $\Spec R$ over $\CC$ with an
Azumaya algebra $A$ of period $2$ and degree $4$ such that if $B$ is
any Azumaya algebra with involution and $[B] = [A]$ then $8
| \deg (B)$.
\end{theorem}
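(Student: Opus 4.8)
The plan is to combine the topological obstruction of Proposition~\ref{something} with an algebraic approximation of the classifying stack of $\SL_4/\mu_2$, in the spirit of \cite{antieau2014-a}. First I would produce the pair $(\Spec R, A)$. Let $G=\SL_4/\mu_2$, an affine algebraic group over $\CC$; choose a generically free linear representation $V$ of $G$ and an open subvariety $U\subseteq V$ on which $G$ acts freely and whose complement has large codimension, and form the smooth quotient $U/G$, which carries a tautological $G$-torsor. Applying Jouanolou's device---which alters neither the homotopy type of the complex points nor the Brauer group---I may replace $U/G$ by a smooth affine $\CC$-variety $\Spec R$ still equipped with a $G$-torsor $P$; pushing $P$ forward along $G\to\PGL_4$ turns it into a $\PGL_4$-torsor, hence an Azumaya $R$-algebra $A$ with $\deg A=4$. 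If $V\setminus U$ has codimension large enough, the classifying map $(\Spec R)(\CC)\to \B\SL_4/\mu_2$ of the analytified torsor is a $7$-equivalence, so $\Spec R$ is precisely the sort of space to which Proposition~\ref{something} applies.

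Next I would verify $\per A=2$. Because the inclusion $\mu_2\hookrightarrow \SL_4\hookrightarrow\GL_4$ coincides with $\mu_2\hookrightarrow\Gm\hookrightarrow\GL_4$, there is a morphism of central extensions from $1\to\mu_2\to\SL_4\to G\to1$ to $1\to\Gm\to\GL_4\to\PGL_4\to1$; chasing $P$ through the induced square of connecting maps shows that $[A]\in\Br(R)$ lies in the image of $\Hoh^2_{\et}(R,\mu_2)\to\Hoh^2_{\et}(R,\Gm)=\Br(R)$, whence $2[A]=0$. To see $[A]\neq 0$ I would analytify: $[A]$ maps to the topological Brauer class $\alpha\in\Br_\topo((\Spec R)(\CC))$, which by \S\ref{subsection:torsion-of-brauer-grp} is the pull-back along the $7$-equivalence of the canonical $2$-torsion class on $\B\SL_4/\mu_2$, namely $\beta_2$ of the generator of $\Hoh^2(\B\SL_4/\mu_2,\ZZ/2)\iso\ZZ/2$ coming from $\xi$ (Lemma~\ref{lem:H2lem}). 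By Figure~\ref{fig:1} with $2n=4$ this class generates $\Hoh^3(\B\SL_4/\mu_2,\ZZ)\iso\ZZ/2$ and is in particular nonzero, and a $7$-equivalence induces an isomorphism on $\Hoh^3$; therefore $\alpha\neq 0$, so $[A]\neq 0$, giving $\per A=2$.

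The heart of the argument is then the obstruction. Suppose $B$ is an Azumaya $R$-algebra with an involution of the first kind, $[B]=[A]$, and suppose toward a contradiction that $8\nmid\deg B$. Since $[A]\neq 0$, $B$ is non-split; an odd-degree Azumaya algebra with an involution of the first kind is split (the involution is necessarily orthogonal and exhibits the algebra as the endomorphism algebra of a quadratic bundle), so $\deg B$ is even, say $\deg B=2m$, and then $8\nmid 2m$ forces $m\not\equiv 0\pmod 4$. As $\Spec R$ is connected the involution is orthogonal or symplectic (\S\ref{sec:preliminaries}), so $B$ with its involution corresponds to a $\PGO_{2m}$- or $\PGSp_{2m}$-torsor on $(\Spec R)_{\et}$; analytifying, I obtain a topological $\PO_{2m}(\CC)$- or $\PSp_{2m}(\CC)$-bundle on $Y:=(\Spec R)(\CC)$, i.e.\ a map to $\B\PO_{2m}(\CC)$ or $\B\PSp_{2m}(\CC)$, whose composite with the natural map to $\B^2\mu_2$ (Diagram~\eqref{eq:ii}) represents a class $\zeta_B\in\Hoh^2(Y,\ZZ/2)$ with $\beta_2(\zeta_B)=[B]_\topo=\alpha\neq 0$. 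As $\Hoh^2(Y,\ZZ/2)\iso\ZZ/2$ (by the $7$-equivalence and Lemma~\ref{lem:H2lem}), $\zeta_B$ is the generator, hence equals $\eta:=\xi\circ(Y\to\B\SL_4/\mu_2)$ up to homotopy. In the symplectic case this already gives a map $Y\to\B(\Sp_{2m}(\CC)/\mu_2)$ over $\B^2\mu_2$ with $m\not\equiv 0\pmod 4$, contradicting Proposition~\ref{something}. In the orthogonal case, $\Hoh^1(Y,\ZZ/2)\iso\Hoh^1(\B\SL_4/\mu_2,\ZZ/2)=0$, so by Remark~\ref{rem:iv} the factorization through $\B(\Og_{2m}(\CC)/\mu_2)$ can be promoted to one through $\B(\SOg_{2m}(\CC)/\mu_2)$, again contradicting Proposition~\ref{something}. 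Therefore $8\mid\deg B$, as required.

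I expect the first step to be the main obstacle: producing a \emph{smooth affine} complex variety whose complex points are genuinely $7$-equivalent to $\B\SL_4/\mu_2$ while carrying the tautological degree-$4$, period-$2$ algebra. This demands a judicious choice of the representation $V$ and a sufficiently large codimension for $V\setminus U$ (to guarantee enough connectivity of $U(\CC)$), together with Jouanolou's device to descend from the quasi-projective quotient to an affine variety without disturbing the relevant invariants; these are exactly the constructions of \cite{antieau2014-a}. Once that is in place, the remainder is bookkeeping arranged so that Proposition~\ref{something} can be applied, and it is that proposition which does the real work.
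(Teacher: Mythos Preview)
Your proposal is correct and follows essentially the same route as the paper: Totaro-style approximation of $\B(\SL_4/\mu_2)$ followed by Jouanolou's device to obtain a smooth affine $\Spec R$ with a $7$-equivalence $(\Spec R)(\CC)\to\B(\SL_4(\CC)/\mu_2)$, then the topological obstruction of Proposition~\ref{something} together with Remark~\ref{rem:iv}. The only cosmetic difference is that the paper deduces evenness of $\deg B$ directly from $\per A = 2 \mid \deg B$, whereas you argue via the (also correct) observation that an odd-degree involutary Azumaya algebra is split.
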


The construction is similar to that of \cite[Thm.\ 1.1]{antieau2014-a}.
\begin{proof}
 Let $V$ be an algebraic linear representation of $\SL_4/ \mu_2$ over $\CC$ such that $\SL_4/\mu_2$ acts freely outside an invariant closed
 subscheme $S$ of codimension at least $5$, and such that $(V-S)/(\SL_4/\mu_2)$ exists as a smooth quasi-projective complex variety. Such
 representations exist by~\cite{totaro1999}*{Rem.\ 1.4}. As the codimension of $S$ is at least $5$, the space $(V-S)(\CC)$ is
 $2(5)-2 = 8$--connected.

Since we would like to have an affine example in particular, we use the Jouanolou device, \cite{jouanolou1973}, to replace $(V-S)/(\SL_4/\mu_2)$ by an affine
vector bundle torsor $p:\Spec R \to (V-S)/(\SL_4/\mu_2)$. In order to simplify the notation, we write $Y = \Spec R$. The map $Y(\CC) \to
(V-S)/(\SL_4/\mu_2)(\CC)$ is a homotopy equivalence.

We pull the evident $(\SL_4/ \mu_2)$-torsor on $(V-S)/(\SL_4/\mu_2)$ back along
$p$, giving an $\SL_4/\mu_2$-torsor, $T$, on $Y$.
There is a map $Y(\CC) \to \B(\SL_4(\CC)/\mu_2)$, classifying the complex realization $T(\CC)$, and the map $Y(\CC) \to
 \B(\SL_4(\CC)/\mu_2)$ is an $8$--equivalence of topological spaces.

 As explained in Section~\ref{subsection:torsion-of-brauer-grp}, the algebraic $(\SL_4/ \mu_2)$-torsor $T$ on $Y$ induces an  Azumaya algebra  $A$ over $Y$ of degree $4$ whose image in $\Br(Y)$ is $2$-torsion.
 Consider the topological Azumaya algebra $A(\CC)$ over $Y(\CC)$. Its image 
 in $\Hoh^2(Y(\CC), \ZZ/2)$ is classified by a map $\eta:Y(\CC)\to \B^2 \mu_2$,
 which factors through the $8$-equivalence $Y(\CC)\to \B \SL_4(\CC)/\mu_2$.
 Therefore, by Lemma~\ref{lem:H2lem}, the map $\eta$ represents a generator of 
 $\Hoh^2(Y(\CC),\ZZ/2)\cong \ZZ/2$. Since $\Hoh^2(Y(\CC),\ZZ)=0$,
 the map $\beta_2:\Hoh^2(Y(\CC),\ZZ/2)\to \Hoh^3(Y(\CC),\ZZ)_\tors=\Br(Y(\CC))$ is injective,
 and
 therefore $A(\CC)$ is not split.
 

 Suppose $B$ is some Azumaya algebra on $Y$ equivalent to $A$, and $B$ is equipped with an involution of the first
 kind. Since $A$ is not split, the degree of $B$ is an even integer, $2n$. Then the topological realization $B(\CC)$ is equivalent to
 $A(\CC)$, and $B(\CC)$ is classified by a map $\beta: Y \to \B G$ where $G = \SOg_{2n}(\CC) / \mu_2$ or $\Sp_{2n}(\CC)/\mu_2$, depending
 on whether the involution is orthogonal or symplectic; here we use Remark \ref{rem:iv} to replace $\Og_{2n}(\CC)$ by $\SOg_{2n}(\CC)$. Since the Brauer classes of $A(\CC)$ and $B(\CC)$ are the same, there is a
 homotopy commutative diagram
 \[ \xymatrix{ Y(\CC) \ar^\beta[rr] \ar^{\eta}[dr] && \B G \ar^{\xi_?}[dl] \\ & \B^2 \mu_2} \]
 from which Proposition \ref{something} implies that $8 | 2n$.
\end{proof}
 We remark that once $\Spec R$ has been found of some, possibly large, dimension, the affine Lefschetz theorem (see~\cite{goresky-macpherson}*{Introduction,
\S2.2}), ensures we can replace it by a $7$-dimensional smooth affine variety.

 We further note that any algebra $A$ as in Theorem~\ref{TH:no-involution} has index $4$ and no zero divisors.
 Indeed, if $A'\in[A]$, then by the proof of Saltman's result given by
 Knus, Parimala, and Srinivas~\cite[\S4]{KnParSri90}, there
 is $A''\in[A]$ admitting an involution with $\deg A''=2\deg A'$. By assumption, $8\mid \deg A''$,
 and hence $4\mid\deg A'$. It follows that $\ind A=\mathrm{gcd}\{\deg A'\,:\, A'\in[A]\}$ is at least $4$.
 Next, let $K$ be the fraction field of $R$. Since $\Spec R$ is nonsingular,
 $\ind A\otimes_R K=\ind A$
 (\cite[Prp.~6.1]{antieau2014}). This implies $\deg A\otimes_R K=4=\ind A\otimes_RK$, and therefore $A\otimes_RK$ is a division algebra.

\section{Split Azumaya Algebras With Only Symplectic Involutions}

Let $X$ be a scheme, let $\calP$ be a locally free $\calO_X$-module of
finite rank, and let $\calL$ be a line bundle over $X$.  Given a
symmetric bilinear form $b : \calP \times \calP \to \calL$, there is
an associated even Clifford algebra $C_0(b)$ and a Clifford bimodule
$C_1(b)$ over the even Clifford algebra constructed by Bichsel
\cite{bichsel:thesis}, \cite{bichsel_knus:values_line_bundles}.  For a
summary of its main properties, see \cite[\S1.8]{auel:clifford},
\cite[\S1.2]{auel:surjectivity}, or \cite[\S1.5]{ABB:quadrics}.  In
particular, if $\calP$ has rank $n$, then $C_0(b)$ and $C_1(b)$ are
locally free $\calO_X$-modules of rank $2^{n-1}$.  There is a natural
vector bundle embedding $\calP \to C_1(b)$.  Assume that $b$ is
regular, i.e., the associated map $\calP \to \HHom(\calP,\calL)$ is an
isomorphism.  If moreover $n$ is even, then the center $Z(b)$ of
$C_0(b)$ is an \'etale quadratic $\calO_X$-algebra, and the left
action of $Z(b)$ on $C_1(b)$ is a twist of the right action by the
unique nontrivial $\calO_X$-algebra automorphism of $Z(b)$.  The class
of the associated \'etale quadratic cover $Z \to X$ defines the
discriminant class $d(b) \in \Hoh^1_\et(X,\ZZ/2\ZZ)$.

\begin{proposition}
\label{PR:binary-decomposable}
Let $b : \calP \times \calP \to \calL$ be a regular symmetric bilinear
form of rank 2 over a scheme $X$. If $b$ has trivial discriminant
then $\calP$ is a direct sum of line bundles.
\end{proposition}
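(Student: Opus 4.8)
The plan is to run the rank-two case of the even Clifford algebra construction recalled above to its (degenerate) conclusion. Two features of rank two drive the argument. First, since $\operatorname{rank}\calP = 2$ the even Clifford algebra $C_0(b)$ is locally free of rank $2^{2-1}=2$ over $\calO_X$; an $\calO_X$-algebra that is locally free of rank two is commutative (locally it is generated over $\calO_X$ by one element), so $C_0(b)$ equals its own center $Z(b)$. Second, the Clifford bimodule $C_1(b)$ is also locally free of rank $2^{2-1}=2$, so the natural vector bundle embedding $\calP \hookrightarrow C_1(b)$ is a morphism of rank-two vector bundles that is locally a direct summand, hence an isomorphism $\calP \cong C_1(b)$ (its cokernel is locally free of rank $0$). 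It therefore suffices to exhibit $C_1(b)$ as a direct sum of line bundles.

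Next I would feed in the hypothesis. Triviality of $d(b)$ says the \'etale quadratic cover $Z \to X$ is split, i.e. $Z(b) = C_0(b) \cong \calO_X \times \calO_X$ as $\calO_X$-algebras; let $\epsilon_1, \epsilon_2 \in \Gamma(X, C_0(b))$ be the corresponding complementary idempotents. Acting by left multiplication on the $C_0(b)$-module $C_1(b)$, they split it as $C_1(b) = \calM_1 \oplus \calM_2$ with $\calM_i := \epsilon_i C_1(b)$; each $\calM_i$ is the image of an idempotent endomorphism of the vector bundle $C_1(b)$, hence a locally free direct summand, and $\operatorname{rank}\calM_1 + \operatorname{rank}\calM_2 = 2$. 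The only real content is to show that neither rank is $0$, i.e. that each $\calM_i$ is a line bundle, which may be checked after passing to stalks. So assume $X = \Spec R$ with $R$ local and $\calL$ trivial; since $2$ is a unit and $b$ is regular, $b$ diagonalizes, so $\calP$ is free on a basis $e_1, e_2$ with $b(e_1,e_2)=0$ and $b(e_i,e_i)=:a_i \in \calO_X^\times$. Triviality of $d(b)$ forces $-a_1 a_2 = c^2$ for a unit $c$; then $C_0(b) = \calO_X[z]/(z^2 - c^2)$ with $z = e_1 e_2$, and on $C_1(b) = \calO_X e_1 \oplus \calO_X e_2$ left multiplication by $z$ is given by the matrix $\smallSMatII{0}{a_2}{-a_1}{0}$. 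A direct computation shows the idempotents $\tfrac12(1 \pm c^{-1} z)$ cut out the two eigenlines $\calO_X\cdot(a_2 e_1 \pm c e_2)$ of this operator, which are genuine rank-one free direct summands since the eigenvalues $\pm c$ are distinct ($2c$ being a unit). Hence each $\calM_i$ is a line bundle, and $\calP \cong C_1(b) = \calM_1 \oplus \calM_2$ as desired.

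I expect the one genuine obstacle to be precisely that last local computation: one must verify that the idempotents split $C_1(b)$ into two \emph{rank-one} pieces rather than into a rank-two piece and a zero piece, and this is where the regularity of $b$ (through $c \in \calO_X^\times$) and the invertibility of $2$ are used; everything else is formal. As an alternative to the explicit computation one can instead invoke the standard fact that the Clifford bimodule $C_1(b)$ is an \emph{invertible} $C_0(b)$-bimodule, which immediately forces it to be locally free of rank $1$ over the commutative ring $C_0(b) \cong \calO_X \times \calO_X$, hence locally $\calO_X \oplus \calO_X$ with $\epsilon_1, \epsilon_2$ acting as the two coordinate projections.
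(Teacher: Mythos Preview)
Your proposal is correct and follows essentially the same approach as the paper: identify $\calP \cong C_1(b)$ and $C_0(b) = Z(b) \cong \calO_X \times \calO_X$, then split $\calP$ via the idempotents. The paper skips your explicit local eigenvalue computation and instead invokes directly the fact that $C_1(b)$ is an invertible $C_0(b)$-module---which is precisely the alternative you sketch in your last paragraph---so an invertible module over $\calO_X \times \calO_X$ decomposes as a sum of two line bundles.
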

\begin{proof}
This is a consequence of \cite[Cor.~5.6]{auel:clifford}, but we will
give a direct proof here for completeness. Since $\calP$ has rank 2,
the vector bundle embedding $\calP \to C_1(b)$ is an $\calO_X$-module
isomorphism, and similarly, the embedding $Z(b) \to C_0(b)$ is an
$\calO_X$-algebra isomorphism. As $b$ is regular, another property is
that $C_1(b)$ is an invertible right $C_0(b)$-module, i.e., $\calP$
has a canonical structure of an invertible $Z(b)$-module.
If $b$ has trivial discriminant, then $Z(b)$ is the split \'etale
quadratic algebra $\calO_X \times \calO_X$, and therefore there is a
decomposition of $\calP$ into a direct sum of two invertible
$\calO_X$-modules.
\end{proof}

This gives a nontrivial necessary condition for the existence of a
regular symmetric bilinear form on a rank 2 vector bundle $\calP$. On
the other hand, any rank 2 vector bundle $\calP$ has a canonical
regular skew-symmetric form $\calP \times \calP \to
\exterior^2 \calP$ defined by wedging.

\begin{corollary}
 \label{COR:no-orthogonal-involution}
 Let $X$ be a scheme such that $\Hoh_\etrm^1(X,\ZZ/2\ZZ)=0$. If
$A=\EEnd(\calP)$ is a split Azumaya algebra of degree 2 with
orthogonal involution over $X$, then $\calP$ is a direct sum
of line bundles. In particular, if $\calP$ is an indecomposable
vector bundle of rank 2 on $X$, then $\EEnd(\calP)$ carries a
symplectic involution but no orthogonal involution.
\end{corollary}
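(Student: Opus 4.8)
The plan is to convert the datum of an orthogonal involution on $\EEnd(\calP)$ into a regular symmetric bilinear form on $\calP$ and then feed it into Proposition~\ref{PR:binary-decomposable}. By Example~\ref{EX:involutions}(3), every orthogonal involution on $\EEnd(\calP)$ is, for a suitable line bundle $\calL$ on $X$, induced by a nondegenerate (equivalently, regular) symmetric bilinear form $b : \calP \times \calP \to \calL$. Attached to $b$ is its discriminant class $d(b) \in \Hoh^1_\etrm(X,\ZZ/2\ZZ)$, which vanishes by hypothesis, so $b$ has trivial discriminant. Proposition~\ref{PR:binary-decomposable} then yields that $\calP$ is a direct sum of line bundles, which is the first assertion.

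For the ``in particular'' clause, assume $\calP$ is an indecomposable vector bundle of rank $2$. Then $\calP$ is not a direct sum of line bundles, so the first part shows that $\EEnd(\calP)$ admits no orthogonal involution. On the other hand $\EEnd(\calP)$ always carries a symplectic involution: one may either quote Example~\ref{EX:deg-two-Az-algebra}, which equips any degree-$2$ Azumaya algebra with a symplectic involution via the reduced trace, or observe that the canonical regular skew-symmetric form $\calP \times \calP \to \exterior^2\calP$ given by wedging induces a symplectic involution on $\EEnd(\calP)$ by Example~\ref{EX:involutions}(3). Either way, the conclusion follows.

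I do not expect a genuine obstacle here: the argument is a matter of assembling Example~\ref{EX:involutions}(3) and Proposition~\ref{PR:binary-decomposable}, together with the elementary observation that an indecomposable bundle is not a sum of line bundles. The only point worth stating carefully is that an orthogonal involution on $\EEnd(\calP)$ need not arise from an $\calO_X$-valued form but in general requires twisting by a nontrivial line bundle $\calL$; this is precisely what the cited Example records, so no additional input is needed, and the vanishing of $\Hoh^1_\etrm(X,\ZZ/2\ZZ)$ is exactly what kills the discriminant obstruction regardless of which $\calL$ occurs.
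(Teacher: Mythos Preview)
Your proposal is correct and follows essentially the same approach as the paper: the paper also invokes the fact (attributed there directly to Saltman, which is what Example~\ref{EX:involutions}(3) cites) that an orthogonal involution on $\EEnd(\calP)$ is adjoint to a regular symmetric $\calL$-valued form, then uses the vanishing of $\Hoh^1_\etrm(X,\ZZ/2\ZZ)$ to kill the discriminant and applies Proposition~\ref{PR:binary-decomposable}, and finally quotes Example~\ref{EX:deg-two-Az-algebra} for the symplectic involution. Your alternative route to the symplectic involution via the wedge form is also fine and is mentioned in the paper just before the corollary.
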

\begin{proof}
By a result of Saltman \cite[Thm.~4.2a]{Sa78}, any orthogonal (resp.\
symplectic) involution on $\EEnd(\calP)$ is adjoint to a regular
symmetric (resp.\ skew-symmetric) bilinear form $b : \calP \times
\calP \to \calL$ in the sense of Example~\ref{EX:involutions}.
If $A$ admits an orthogonal involution and $b$ is the corresponding form,
then the discriminant $d(b)\in\Hoh_\etrm^1(X,\ZZ/2\ZZ)$ is trivial by assumption, hence $\calP$
is a direct sum of two line bundles by Proposition~\ref{PR:binary-decomposable}.
Finally, the algebra $A$ always has a symplectic involution by Example~\ref{EX:deg-two-Az-algebra}.
\end{proof}

It is easy to provide a projective scheme and an indecomposable rank 2
vector bundle, e.g., $X=\PP^2$ and $\calP=\Omega^1_{\PP^2}$.  We
now give an example of an integral affine scheme $X$ and a locally
free sheaf $\calP$ satisfying the conditions of Corollary
\ref{COR:no-orthogonal-involution}, thus giving rise to a split
Azumaya algebra of degree 2 admitting a symplectic involution but no
orthogonal involution over a domain.

\begin{example}\label{EX:no-orthogonal-involution}
Let $R= \CC[x,y,z,s,t,u]/(xs+yt+zu-1)$. The vector $v=(x,y,z)\in R^3$
is unimodular, hence $\calP=R^3/Rv$ is a projective $R$-module of rank
$2$. Indeed, the ring $R$ is $(A_{3,1})_\CC$ and the module $\calP$ is
$P_{3,1}$ in the notation of \cite{raynaud1968}. There is an affine
vector bundle torsor $\Spec R \to \mathbb{A}^3\setminus \{0\}$, which
on the level of coordinates is given by $(x,y,z,s,t,u) \mapsto
(x,y,z)$, and it follows that $\Pic(R) = \mathrm{CH}^1(\Spec R) = 0$,
see \cite[Thm.\ 1.9]{fulton1984}. By \cite[Cor.\
6.3]{raynaud1968}, the $R$-module $\calP$ is not free, and as a
consequence is indecomposable.

By Artin's Comparison theorem, $\Hoh_\etrm^1(R,\ZZ/2\ZZ)=0$, and thus
$A=\End(\calP)$ has a symplectic involution, but no orthogonal
involution by Corollary~\ref{COR:no-orthogonal-involution}.
\end{example}

\section{Non-Split Azumaya Algebras With Only Symplectic Involutions}

    We now show that any Azumaya algebra with involution over an affine scheme is a specialization of
    an involutary Azumaya algebra without zero divisors. Applying this
    to Example~\ref{EX:no-orthogonal-involution}, we obtain non-split Azumaya algebras of degree $2$
    admitting only symplectic involutions.


 We shall give two different constructions, both arising from orders in certain
 generic division algebras.
 The first construction is in fact a sequence of involutary Azumaya algebras that together specialize
 to any involutary Azumaya algebra. The centers of these algebras are regular, but their
 Krull dimension is very large.
 The second construction is not universal in the previous sense, but its center has smaller Krull dimension.

\medskip

 Henceforth, we shall restrict to Azumaya algebras over commutative rings. We write $A/R$ to
 denote that $A$ is an $R$-algebra.
 We remind the reader that if $A/R$ and $A'/R'$ are two Azumaya algebras
 of degree $n$, then any ring homomorphism $\phi:A\to A'$ is in fact
 a \emph{specialization}, meaning that $\phi(R)\subseteq R'$, and $A\otimes_\phi R'\cong A'$
 via $a\otimes r'\mapsto \phi(a)r'$ (see \cite[Cor.~2.9b]{Sa99}).


 We will also need Rowen's version of
 the Artin--Procesi Theorem.
 If $g(x_1,\dots,x_n)$ is a polynomial in non-commuting
 variables over $\ZZ$ and $A$ is a ring, then let
 $g(A)$ denote $\{g(a_1,\dots,a_n)\,|\, a_1,\dots,a_n\in A\}$. We further let
 $\mathrm{Id}(A)$ denote the \emph{multilinear polynomial identities} of $A$ over $\ZZ$
 (see \cite[Chp.~6]{Rowen88B}).
 The center of $A$ is denoted $Z(A)$.

 \begin{theorem}[Artin, Procesi, Rowen]\label{TH:artin-procesi}
 Let $A$ be a ring and let $g_n(x_1,\dots,x_{4n^2+1})$ denote the {Formanek central polynomial}
 (see \cite[Def.\ 6.1.21]{Rowen88B}).
 The following conditions are equivalent:
 \begin{enumerate}
 \item[(a)] $A$ is Azumaya of degree $n$ over $Z(A)$.
 \item[(b)] $\mathrm{Id}(\Mat_{n\times n}(\ZZ))\subseteq\mathrm{Id}(A)$,
 and there exists a multilinear polynomial identity of $\Mat_{(n-1)\times (n-1)}(\ZZ)$ that is not a polynomial
 identity of any nonzero image of $A$.
 \item[(c)] $\mathrm{Id}(\Mat_{n\times n}(\ZZ))\subseteq\mathrm{Id}(A)$, the polynomial $g_n$ is a central polynomial of $A$, and
 $1_A$ is in the additive group spanned by $g_n(A)$.
 \end{enumerate}
 \end{theorem}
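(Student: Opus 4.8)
This is a classical result, and the plan is to derive it by combining two well-known pieces of the theory of polynomial identities, both available in \cite{Rowen88B}: the Artin--Procesi theorem, which is essentially the equivalence of (a) and (b), and the characterization of Azumaya algebras by means of the Formanek central polynomial, which yields the equivalence of (a) with (c). Concretely I would establish the cycle of implications $(a)\Rightarrow(b)$, $(b)\Rightarrow(a)$, $(a)\Rightarrow(c)$, $(c)\Rightarrow(b)$, of which $(b)\Rightarrow(a)$ is the only substantial one.

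For $(a)\Rightarrow(b)$: choose a faithfully flat \'etale $Z(A)$-algebra $S$ with $A\otimes_{Z(A)}S\cong\Mat_{n\times n}(S)$. Since multilinear polynomial identities are preserved under arbitrary base change and are detected by faithfully flat descent, $\mathrm{Id}(\Mat_{n\times n}(\ZZ))=\mathrm{Id}(\Mat_{n\times n}(S))\subseteq\mathrm{Id}(A)$. For the second clause I would use that the two-sided ideals of the Azumaya algebra $A$ over $R=Z(A)$ are precisely the $IA$ for ideals $I\subseteq R$, so every nonzero image of $A$ is of the form $A\otimes_R R/I$ with $R/I\neq 0$, hence is Azumaya of degree $n$ over $R/I$ and, after reduction modulo a maximal ideal, admits a further quotient isomorphic to a central simple algebra of degree $n$ over a field $k$. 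By the Amitsur--Levitzki theorem the standard polynomial $s_{2n-2}$ is a multilinear identity of $\Mat_{(n-1)\times(n-1)}(\ZZ)$; it is not an identity of $\Mat_{n\times n}(k)$, and since polynomial identities are inherited by homomorphic images, $s_{2n-2}$ is satisfied by no nonzero image of $A$. This proves (b).

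For $(b)\Rightarrow(a)$, the substantive ``Artin'' direction: the hypothesis is that $A$ is a PI ring satisfying $\mathrm{Id}(\Mat_{n\times n}(\ZZ))$ and that no nonzero image of $A$ satisfies $\mathrm{Id}(\Mat_{(n-1)\times(n-1)}(\ZZ))$ --- the latter following from the existence of the single witness identity together with Kaplansky's theorem on primitive PI rings, which forces every prime image of $A$ to have ``PI-degree'' exactly $n$. One then argues that $Z(A)$ is the center of the algebra being constructed, that every fiber $A\otimes_{Z(A)}k(\mathfrak p)$ is a central simple $k(\mathfrak p)$-algebra of degree exactly $n$, and, crucially, that $A$ is a finitely generated projective $Z(A)$-module of constant rank $n^2$; an algebra with these properties is Azumaya of degree $n$ over $Z(A)$. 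The projectivity is obtained by using a central polynomial explicitly to manufacture, locally on $\Spec Z(A)$, a full set of matrix units identifying $A$ with $\Mat_{n\times n}$ of a commutative ring. This is the technically heaviest step and the main obstacle: passing from purely identity-theoretic data to the local-matrix (equivalently, \'etale-local-matrix) structure requires the full machinery of central polynomials and trace identities together with a careful fiberwise analysis, including the verification that nilpotents in $Z(A)$ or in the fibers do not obstruct the argument.

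For $(a)\Rightarrow(c)$: by the descent argument above $\mathrm{Id}(\Mat_{n\times n}(\ZZ))\subseteq\mathrm{Id}(A)$, and then $[g_n,x_{4n^2+2}]\in\mathrm{Id}(\Mat_{n\times n}(\ZZ))\subseteq\mathrm{Id}(A)$ shows that $g_n$ takes values in $Z(A)$, i.e.\ $g_n$ is a central polynomial of $A$. Since $g_n$ is multilinear and $Z(A)$ is central, the additive span of $g_n(A)$ coincides with the ideal of $Z(A)$ generated by $g_n(A)$; after base change to $S$, multilinearity of $g_n$ and the fact that $A$ spans $\Mat_{n\times n}(S)$ over $S$ identify this with the ideal of $S$ generated by $g_n(\Mat_{n\times n}(S))$, which is all of $S$ provided $g_n$ is normalized so that $1\in g_n(\Mat_{n\times n}(\ZZ))$ (Formanek--Razmyslov). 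Faithfully flat descent then gives $1_A$ in the additive span of $g_n(A)$, which is the last clause of (c). Finally $(c)\Rightarrow(b)$: given (c), $g_n$ is a central polynomial of $A$, and because $g_n$ is a central polynomial of $\Mat_{n\times n}$ it is a multilinear identity of $\Mat_{(n-1)\times(n-1)}$; moreover $g_n$ is satisfied by no nonzero image $A/M$ of $A$, for if it were then $g_n(A)\subseteq M$, hence the additive span of $g_n(A)$ lies in $M$ and $1_A\in M$, forcing $M=A$. Thus $g_n$ itself is the required witness identity, and (b) follows.
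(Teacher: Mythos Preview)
The paper does not actually prove this theorem: its entire proof is the sentence ``See \cite[Thm.~6.1.35]{Rowen88B} or \cite[\S6]{Rowen74}, for instance.'' So there is no argument of the paper's to compare against; your sketch is in fact a faithful outline of the standard proof found in those references, and the cycle $(a)\Rightarrow(b)$, $(b)\Rightarrow(a)$, $(a)\Rightarrow(c)$, $(c)\Rightarrow(b)$ is exactly how Rowen organizes it.

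Your outline is correct. A couple of remarks on points where a reader might hesitate: in $(c)\Rightarrow(b)$ your claim that a multilinear central polynomial of $\Mat_{n\times n}$ is automatically an identity of $\Mat_{(n-1)\times(n-1)}$ is right, and the clean reason is worth recording: embed $\Mat_{(n-1)\times(n-1)}$ as the upper-left block (a non-unital subring) of $\Mat_{n\times n}$; a multilinear polynomial has no constant term, so its values on this subring lie in the subring, but they also lie in the scalar matrices of $\Mat_{n\times n}$, and the intersection is $\{0\}$. In $(a)\Rightarrow(c)$ the passage ``the additive span of $g_n(A)$ coincides with the ideal of $Z(A)$ generated by $g_n(A)$'' uses multilinearity via $z\cdot g_n(a_1,\dots)=g_n(za_1,\dots)$, which is fine; the faithfully flat descent step then reduces to the concrete fact that $g_n$ already assumes the value $1$ on suitable matrix units in $\Mat_{n\times n}(\ZZ)$, which is how the Formanek polynomial is normalized in \cite{Rowen88B}. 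The only place where real content is hidden is, as you say, $(b)\Rightarrow(a)$; your description of that step (localizing at values of a central polynomial to produce matrix units and thereby establish projectivity of $A$ over $Z(A)$) is the correct strategy, and it is precisely what is carried out in the cited sources.
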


 \begin{proof}
 See \cite[Thm.~6.1.35]{Rowen88B} or \cite[\S6]{Rowen74}, for instance.
 \end{proof}

\subsection{First Construction}

 This construction is inspired by Rowen's generic division algebras with involution
 \cite{Ro75}.


\medskip

 Fix an integral domain $\Omega$ with $2\in\Omega^\times$
 and fix $n>1$.
 Recall that $g_n$ denotes the Formanek central polynomial,
 and let $N=4n^2+1$ denote the number of variables of $g_n$. Theorem~\ref{TH:artin-procesi}
 implies that any Azumaya algebra $A/R$ of degree $n$ admits vectors $v_1,\dots,v_m\in A^{N}$
 such that $\sum_{i=1}^mg_n(v_i)=1_{A}$. We call the minimal possible such $m$ the \emph{Formanek number}
 of $A$ and denote it by $\mathrm{For}(A)$.

 For every $k\in\NN$, let $T=T(n,k)$ be the free commutative $\Omega$-algebra
 spanned by $\{x_{ij}^{(r)}\}_{i,j,r}$ where $1\leq i,j\leq n$ and $1\leq r\leq k$. We let $F$ denote
 the fraction field of $T$.
 Let $X_r$ denote the generic matrix $(x_{ij}^{(r)})\in\Mat_{n\times n}(T)$.
 Let $A_0(n,k)$ be the $\Omega$-subalgebra of $\Mat_{n\times n}(T)$ generated
 by $X_1,X_1^{\tr},\dots,X_{k},X_{k}^{\tr}$. If $n$ is even, let $B_0(n,k)$ be the $\Omega$-subalgebra
 of $\Mat_{n\times n}(T)$ generated by $X_1,X_1^{\syp},\dots,X_k,X_k^{\syp}$ (see Example~\ref{EX:involutions}).
 We alert the reader that the algebras $A_0(n,k)$ and $B_0(n,k)$ are not Azumaya in general.

 \begin{lemma}\label{LM:specialization}
 Let $(A,\sigma)$ be an Azumaya algebra of degree $n$ with an orthogonal (resp.\ symplectic)
 involution and such that $R:=Z(A)$ is an $\Omega$-algebra. Let $a_1,\dots,a_k\in A$. Then there is a homomorphism
 of $\Omega$-algebras with involution $\phi:(A_0(n,k),\tr)\to (A,\sigma)$
 (resp.\ $\phi:(B_0(n,k),\syp)\to (A,\sigma)$)
 such that $\phi(X_i)=a_i$ for all $1\leq i\leq k$.
 \end{lemma}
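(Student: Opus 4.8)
The plan is to build $\phi$ by first mapping the generic matrices to the $a_i$ and then checking that this extends consistently to the subalgebra generated by the $X_i$ and their transposes (resp.\ symplectic transposes). First I would fix a trivialization: since $(A,\sigma)$ is Azumaya of degree $n$ with an orthogonal (resp.\ symplectic) involution, by Proposition~\ref{PR:local-isom-of-algebras-with-inv} it is \'etale-locally isomorphic to $(\Mat_{n\times n}(\calO_X),\tr)$ (resp.\ $(\Mat_{n\times n}(\calO_X),\syp)$). However, since I only need a ring homomorphism landing in $A$ rather than an isomorphism, a cleaner route is to use the universal property of the polynomial ring $T=T(n,k)$: choose, after passing to a faithfully flat \'etale cover $S/R$, an isomorphism $\psi:A\otimes_R S\xrightarrow{\sim}\Mat_{n\times n}(S)$ carrying $\sigma$ to $\tr$ (resp.\ $\syp$). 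Then each $a_i\otimes 1$ becomes a concrete matrix $(\psi(a_i)_{pq})\in\Mat_{n\times n}(S)$, and sending $x_{pq}^{(i)}\mapsto \psi(a_i)_{pq}$ defines an $\Omega$-algebra homomorphism $T\to S$, hence a homomorphism $\Mat_{n\times n}(T)\to \Mat_{n\times n}(S)$ sending $X_i\mapsto \psi(a_i)$.

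Next I would observe that this matrix-ring homomorphism is automatically compatible with the transpose (resp.\ symplectic transpose): these operations are defined entrywise (together with a fixed rearrangement of blocks in the symplectic case) by formulas with $\ZZ$-coefficients, so they commute with any homomorphism induced by a ring map on entries. Consequently $X_i^{\tr}\mapsto \psi(a_i)^{\tr}$ (resp.\ $X_i^{\syp}\mapsto \psi(a_i)^{\syp}$), and since $\psi$ intertwines $\sigma$ and $\tr$ (resp.\ $\syp$), we get $X_i^{\tr}\mapsto \psi(\sigma(a_i))$. Therefore the restriction of this homomorphism to the $\Omega$-subalgebra $A_0(n,k)$ (resp.\ $B_0(n,k)$) generated by the $X_i$ and $X_i^{\tr}$ (resp.\ $X_i^{\syp}$) lands inside $\psi(A\otimes_R S)$, and composing with $\psi^{-1}$ gives an $\Omega$-algebra homomorphism $A_0(n,k)\to A\otimes_R S$ (resp.\ $B_0(n,k)\to A\otimes_R S$) which intertwines $\tr$ (resp.\ $\syp$) with $\sigma$ and sends $X_i\mapsto a_i\otimes 1$.

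The remaining point — and the one I expect to be the main obstacle — is descent: I have produced a homomorphism into $A\otimes_R S$, and I must check its image actually lies in $A$ (equivalently in $A\otimes_R R = A$), so that it defines $\phi:A_0(n,k)\to A$. For this I would note that the construction is, up to the choice of $\psi$, canonical: the composite $A_0(n,k)\to A\otimes_R S$ is characterized by its values on the generators $X_i\mapsto a_i$ and the involution-compatibility, both of which are intrinsic to $(A,\sigma)$ and the $a_i$, hence independent of $\psi$ and of the cover. Therefore the two pullbacks of the homomorphism along the two projections $S\to S\otimes_R S$ agree, and by faithfully flat descent for the quasi-coherent sheaf of $\Omega$-algebra homomorphisms — or, concretely, because $A = \ker(A\otimes_R S\rightrightarrows A\otimes_R S\otimes_R S)$ — the homomorphism factors through $A$. (Alternatively, if one is willing to work only over affine $X=\Spec R$ and use \cite[\S{}III.8.5]{Kn91}, one can often arrange $S=R$ directly, bypassing descent.) This yields the desired $\phi:(A_0(n,k),\tr)\to(A,\sigma)$ (resp.\ $\phi:(B_0(n,k),\syp)\to(A,\sigma)$) with $\phi(X_i)=a_i$.
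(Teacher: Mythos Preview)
Your proof is correct and follows essentially the same approach as the paper: pass to an \'{e}tale cover trivializing $(A,\sigma)$, specialize the generic matrices entrywise, and use that the transpose (resp.\ symplectic transpose) is given by universal $\ZZ$-formulas to get involution compatibility. The one place you diverge is the final step, which you flag as ``the main obstacle''; the paper bypasses descent entirely with the elementary observation that since $A_0(n,k)$ is generated as an $\Omega$-algebra by $X_1,\dots,X_k,X_1^{\tr},\dots,X_k^{\tr}$, and your map sends these to $a_1,\dots,a_k,\sigma(a_1),\dots,\sigma(a_k)\in A\subseteq A\otimes_R S$, the image already lies in $\Omega[a_1,\dots,a_k,\sigma(a_1),\dots,\sigma(a_k)]\subseteq A$. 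Your descent argument is valid, but this direct observation---which is implicit in your own remark that the map is ``characterized by its values on the generators''---renders it unnecessary.
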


 \begin{proof}[Proof (compare with {\cite[Thm.~27(i)]{Ro75}})]
 We will treat the orthogonal case; the symplectic case is similar.

 The lemma is clear when $A=\Mat_{n\times n}(R)$ and $\sigma=\tr$,
 indeed, simply specialize $X_i$ to $a_i$
 for all $i$.
 For general $A$, choose an \'{e}tale $R$-algebra $S$ such that $(A',\sigma'):=(A\otimes_RS,\sigma\otimes_R\id_S)$
 is isomorphic to $(\Mat_{n\times n}(S),\tr)$ (see Proposition~\ref{PR:local-isom-of-algebras-with-inv}),
 and view $(A,\sigma)$ as an involutary subring of $(A',\sigma')$.
 Then there is $\phi:A_0(n,k)\to A'$
 with $\phi(X_i)=a_i$ and $\phi(X_i^{\tr})=\sigma'(a_i)=\sigma(a_i)$.
 Since $A_0(n,k)$ is generated as an $\Omega$-algebra by $X_1,\dots,X_k,X_1^{\tr},\dots,X_{k}^{\tr}$,
 we have $\mathrm{im}(\phi)\subseteq \Omega[a_1,\dots,a_k,\sigma(a_1),\dots,\sigma(a_k)]\subseteq A$.
 \end{proof}

 Suppose now that $k=Nm$ for $m\in\NN$ and let
 \[
 \omega=\sum_{i=1}^m g_n(X_{N(i-1)+1},\dots,X_{N(i-1)+N})\in A_0(n,k)\cap B_0(n,k)
 \]
 Since $g_n$ is a central polynomial of $\Mat_{n\times n}(T)$, the matrix $\omega$ is diagonal, and hence $\omega^\tr=\omega^\syp=\omega$.
 Furthermore, since $g_n$ is
 not a polynomial identity of $\Mat_{n\times n}(\Omega)$, we have $\omega\neq 0$ (because the matrices $X_1,\dots,X_{Nm}$
 can be specialized to any $n\times n$ matrix over $\Omega$).
 We define
 \[
 A(n,m):=A_0(n,Nm)[\omega^{-1}]\subseteq \Mat_{n\times n}(F)\ .
 \]
 The involution $\tr$ on $\Mat_{n\times n}(F)$ restricts to an involution on $A(n,m)$, which we also
 denote by $\tr$.
 In the same way, we define $B(n,m)$ for $n$ even by replacing ``$A_0$'' with ``$B_0$'' and ``$\tr$'' with ``$\syp$''.



 \begin{theorem}\label{TH:no-zero-divisors-II}
 Suppose $n>1$ is a power of $2$. Then
 $A(n,m)$ (resp.\ $B(n,m)$) is an Azumaya algebra of degree $n$ without zero divisors.
 Furthermore, $(A(n,m),\tr)$ (resp.\ $(B(n,m),\syp)$) specializes to any Azumaya algebra
 with an orthogonal (resp.\ symplectic) involution $(E,\sigma)$ satisfying $\deg E=n$ and
 $\mathrm{For}(E)\leq m$, and such that $Z(E)$ is an $\Omega$-algebra.
 \end{theorem}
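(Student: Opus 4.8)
The plan is to establish, in turn, that $A(n,m)$ (resp.\ $B(n,m)$) is Azumaya of degree $n$, that it specializes as claimed, and that it has no zero divisors; I treat only the orthogonal case, the symplectic one being identical with $\tr, A_0$ replaced by $\syp, B_0$ throughout.

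\textbf{Azumaya of degree $n$, and the specialization claim.} For the first assertion I would verify condition~(c) of Theorem~\ref{TH:artin-procesi}. Since $A(n,m)\subseteq\Mat_{n\times n}(F)$, it satisfies every multilinear identity of $\Mat_{n\times n}(\ZZ)$; as $g_n$ is a central polynomial of $\Mat_{n\times n}(F)$, its values on $A(n,m)$ lie in $F\cdot I_n\cap A(n,m)=Z(A(n,m))$; and $g_n$ is not an identity of $A(n,m)$ because $\omega\neq 0$. Finally, since $g_n$ is multilinear and $\omega^{-1}$ is central, the identity $1=\omega^{-1}\omega=\sum_{i=1}^m g_n\bigl(\omega^{-1}X_{N(i-1)+1},X_{N(i-1)+2},\dots,X_{N(i-1)+N}\bigr)$ writes $1$ as a sum of values of $g_n$ on $A(n,m)$. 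Hence $A(n,m)$ is Azumaya of degree $n$ over $C:=Z(A(n,m))$, which is a domain as it lies inside $F\cdot I_n$; write $K$ for its fraction field. For the specialization claim, given $(E,\sigma)$ of degree $n$ with $\mathrm{For}(E)\le m$ and $Z(E)$ an $\Omega$-algebra, pick $a_1,\dots,a_{Nm}\in E$ whose $m$ blocks of $N$ consecutive entries witness $\mathrm{For}(E)\le m$ (padding with zeros). Lemma~\ref{LM:specialization} produces a homomorphism of $\Omega$-algebras with involution $\phi\colon(A_0(n,Nm),\tr)\to(E,\sigma)$ with $\phi(X_i)=a_i$; then $\phi(\omega)=1_E$, so $\phi$ extends to $(A(n,m),\tr)\to(E,\sigma)$, and since both algebras are Azumaya of degree $n$ this is a specialization by \cite[Cor.~2.9b]{Sa99}.

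\textbf{No zero divisors.} As $A(n,m)\subseteq\Mat_{n\times n}(F)$ is torsion-free over $C$, it embeds in the degree-$n$ central simple $K$-algebra $A(n,m)\otimes_C K$; it therefore suffices to prove the latter is a division algebra, i.e.\ has index $n$. It carries an involution of the first kind (the $K$-linear extension of $\tr$, which fixes $C$), so by Albert's theorem its Brauer class is $2$-torsion and hence its index is a power of $2$; being a divisor of $n$, it equals $2^s$ with $s\le\log_2 n$. Now, because $n$ is a power of $2$ there exists a division algebra $E_0$ of degree $n$, over an $\Omega$-algebra, with an orthogonal (resp.\ symplectic) involution of the first kind: take the tensor product of $\log_2 n$ quaternion algebras over an iterated Laurent series field over the fraction field of $\Omega$, choosing the factor involutions so that the number of symplectic ones has the parity making the product involution of the desired type. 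A division algebra has Formanek number $1$ (if $g_n(v)=c\neq 0$ then $g_n(c^{-1}v_1,v_2,\dots)=1$ by multilinearity), so the specialization claim gives a specialization $A(n,m)\to E_0$; letting $\kappa$ be the residue field at the corresponding prime of $C$, this realizes $E_0$ as a field base change of the degree-$n$ algebra $A(n,m)\otimes_C\kappa$, whence $\ind(A(n,m)\otimes_C\kappa)=n$. It remains to see the index cannot drop on passing from $\kappa$ up to $K$. After harmlessly enlarging $\Omega$ to its fraction field, $C$ is a localization --- on the locus $\omega\neq 0$ --- of the ring of invariants of $(\Mat_{n\times n})^{Nm}$ under the relevant projective similitude group ($\PGO_n$, resp.\ $\PGSp_n$), which there acts with trivial stabilizers (the generic tuple generates $\Mat_{n\times n}$), so that $\Spec C$ is smooth, exactly as in the construction of Section~\ref{section:back-to-algebra}. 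Then \cite[Prp.~6.1]{antieau2014} gives $\ind(A(n,m)\otimes_C K)=\ind_C(A(n,m))$, and the latter is divisible by $\ind(A(n,m)\otimes_C\kappa)=n$; together with $\ind(A(n,m)\otimes_C K)\mid n$ this forces equality, so $A(n,m)\otimes_C K$, and hence $A(n,m)$, has no zero divisors.

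\textbf{Main obstacle.} The heart of the argument is this last step --- verifying that $C$ is regular, and that the index does not decrease under generization over a regular base (via the cited result of Antieau, or a direct index semicontinuity argument). Everything else reduces to the Artin--Procesi--Rowen theorem, Lemma~\ref{LM:specialization}, and the existence of a degree-$n$ division algebra with an involution of the first kind; this existence is exactly what fails as soon as $n$ has an odd prime factor, which accounts for the hypothesis that $n$ be a power of $2$.
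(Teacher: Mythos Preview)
Your treatment of the Azumaya and specialization claims matches the paper's proof essentially verbatim: the same use of condition~(c) in Theorem~\ref{TH:artin-procesi}, the same multilinearity trick to write $1$ as a sum of values of $g_n$, and the same application of Lemma~\ref{LM:specialization} followed by extension over the unit $\omega$.

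For the absence of zero divisors the paper takes a much shorter path: it simply invokes \cite[Thm.~29]{Ro75}, which directly shows that Rowen's generic matrix algebra with involution is a domain when the degree is a power of $2$. Your route --- specialize to a division algebra with involution of the correct type, then use regularity of the center $C$ together with \cite[Prp.~6.1]{antieau2014} to pull the index back to the generic fiber --- is a legitimate alternative strategy, and in fact parallels the argument of Lemma~\ref{LM:no-zero-divisors} in the paper's second construction. But your justification of the regularity of $C=Z(A(n,m))$ has real gaps. The identification of $C$ with a localized ring of $\PGO_n$-invariants requires the first fundamental theorem of invariant theory for $\Og_n$ acting on tuples of matrices, which you do not cite and which is delicate when $\mathrm{Frac}(\Omega)$ has positive (odd) characteristic; your parenthetical ``the generic tuple generates $\Mat_{n\times n}$'' only argues freeness of the action at the generic point, whereas you need it on all of $\{\omega\neq 0\}$ (this does hold, by another application of Artin--Procesi at each $\kappa$-point, but it must be said); and the appeal to Section~\ref{section:back-to-algebra} is misplaced, since that section builds approximations to a classifying stack rather than invariant-theoretic quotients. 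The paper does prove smoothness of $\Spec C\to\Spec\Omega$ separately, as Proposition~\ref{PR:smoothness}, via a formal-smoothness argument that works uniformly for noetherian $\Omega$ in characteristic $\neq 2$ and does not depend on Theorem~\ref{TH:no-zero-divisors-II}; so your strategy can be repaired by forward-referencing that result after passing to $\mathrm{Frac}(\Omega)$. The net effect is that you have traded the paper's one-line citation of Rowen for a heavier structural argument whose key step, as written, is incomplete.
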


 \begin{proof}
 Again, we prove only the orthogonal case.

 We use condition (c) of Theorem~\ref{TH:artin-procesi} to check that $A(n,m)$ is Azumaya.
 That $\mathrm{Id}(\Mat_{n\times n}(\ZZ))\subseteq \mathrm{Id}(A(n,m))$
 and $g_n$ is a central polynomial of $A(n,m)$ follow from the fact that $A(n,m)\subseteq \Mat_{n\times n}(F)$.
 Next, we have $\omega=\sum_{i=1}^m g_n(X_{N(i-1)+1},\dots,X_{N(i-1)+N})$,
 and hence
 \[1=\sum_{i=1}^m g_n(\omega^{-1}X_{N(i-1)+1},X_{N(i-1)+2},\dots,X_{N(i-1)+N})\]
 because $g_n$ is multilinear, so $1$ is in the additive group generated by $g_n(A(n,m))$, as required.

 That $A(n,m)$ has no zero divisors follows from \cite[Thm.~29]{Ro75}.

 To finish, let $(E,\sigma)$ be as in the theorem.
 Since $\mathrm{For}(E)\leq m$, there are vectors $v_1,\dots,v_m\in E^N$ such
 that $\sum_ig_n(v_i)=1_{E}$.
 Define $a_1,\dots,a_{Nm}\in E$ via $v_i=(a_{N(i-1)+1},\dots,a_{N(i-1)+N})$.
 By Lemma~\ref{LM:specialization}, there is a homomorphism
 $\phi:(A_0(n,Nm),\tr)\to (E,\sigma)$ such that $\phi(X_i)=a_i$ and $\phi(X_i^{\tr})=\sigma(a_i)$
 for all $1\leq i\leq Nm$.
 It follows that the element $\omega\in A_0(n,Nm)$ is mapped by $\phi$ to $1_E$.
 Thus, $\phi$ extends to a homomorphism of rings with involution $(A(n,m),\tr)\to (E,\sigma)$
 by setting $\phi(\omega^{-1})=1$.
 \end{proof}

 Write
 $
 Z(n,m)=Z(A(n,m))
 $
 and $W(n,m)=Z(B(n,m))$. We now show that the morphisms $\Spec Z(n,m)\to \Spec \Omega$
 and $\Spec W(n,m)\to \Spec \Omega$ are smooth when $\Omega$ is noetherian. As a result, $Z(n,m)$ and $W(n,m)$
 are regular when $\Omega$ is a field.

 \begin{lemma}\label{LM:noetherianity}
 Let $A/R$ be an Azumaya algebra. Suppose that there is a noetherian
 subring $R_0\subseteq R$ such that $A$ is finitely generated as an $R_0$-algebra.
 Then $R$ is finitely generated as an $R_0$-algebra. In particular, $A$ and $R$ are noetherian.
 \end{lemma}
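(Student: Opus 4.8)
The plan is to run the argument of the Artin--Tate lemma, taking advantage of the fact that $R=Z(A)$ acts centrally on $A$, so that the classical commutative computation survives the passage to a noncommutative ring.

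First I would recall that, by the very definition of an Azumaya algebra, $A$ is locally free of finite rank as a module over $R=Z(A)$, and in particular finitely generated as an $R$-module; fix module generators $y_1=1,y_2,\dots,y_s$ of $A$ over $R$. By hypothesis $A$ is also finitely generated as an $R_0$-algebra, say by $x_1,\dots,x_t$. Expanding, write $x_i=\sum_j r_{ij}y_j$ and $y_iy_j=\sum_k r_{ijk}y_k$ with all $r_{ij},r_{ijk}\in R$, and let $R_1$ be the $R_0$-subalgebra of $R$ generated by this finite collection of elements. Since $R_0$ is noetherian, $R_1$ is noetherian by the Hilbert basis theorem.

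The key step is to show that $A$ is finitely generated as an $R_1$-module, with the same generators $y_1,\dots,y_s$. An arbitrary element of $A$ is a (noncommutative) $R_0$-polynomial in $x_1,\dots,x_t$; substituting $x_i=\sum_j r_{ij}y_j$ and using that the coefficients $r_{ij}$ are central --- hence may be pulled out of any monomial in the $x_i$ --- expresses it as an $R_1$-linear combination of products $y_{j_1}\cdots y_{j_\ell}$, where the empty product is covered by $y_1=1$. Each such product is then reduced inductively, again using that the coefficients $r_{ijk}$ are central, to an $R_1$-linear combination of $y_1,\dots,y_s$. Hence $A=\sum_{k=1}^s R_1 y_k$. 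The only place noncommutativity could cause trouble is exactly this reduction, and centrality of $R$ in $A$ is precisely what makes it go through unchanged.

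Finally, since $R_0\subseteq R_1\subseteq R\subseteq A$ and $R$ is a subring, $R$ is an $R_1$-submodule of the finitely generated module $A$ over the noetherian ring $R_1$, hence $R$ is finitely generated as an $R_1$-module; as $R_1$ is a finitely generated $R_0$-algebra, it follows that $R$ is finitely generated as an $R_0$-algebra. The remaining assertions are then immediate: $R$ is noetherian by the commutative Hilbert basis theorem, and $A$, being finitely generated as a module over the commutative noetherian ring $R$, is noetherian as an $R$-module and therefore left and right noetherian as a ring.
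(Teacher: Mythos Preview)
Your argument is correct and is precisely the Artin--Tate argument the paper invokes: the paper's proof simply observes that $A$ is finitely generated as an $R$-module and then cites a noncommutative variant of the Artin--Tate lemma from Montgomery--Small, whereas you have written out that variant in full, using centrality of $R$ in $A$ to make the classical reduction go through. The approaches are the same; you are just more explicit.
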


 \begin{proof}
 Since $A$ is Azumaya, it is finitely generated as a module over $R$.
 The proposition therefore follows from a variant of the Artin--Tate Lemma (see \cite[Lem.~1]{MontSmall86},
 for instance).
 \end{proof}

 \begin{lemma}\label{LM:lift-mod-nil-ideal}
    Let $R$ be a commutative ring and let $I$ be an ideal in $R$ satisfying $I^2=0$.
    Let $G$ be a smooth affine group scheme over $R$. Then
    the map $\Hoh^1_{\et}(R,G)\to\Hoh^1_{\et}(R/I,G)$ is bijective.
 \end{lemma}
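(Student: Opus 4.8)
The plan is to use deformation theory for torsors under a smooth affine group scheme, in the spirit of the standard argument that smoothness gives infinitesimal lifting of torsors. Concretely, since $I^2 = 0$, the kernel of $R \to R/I$ is a square-zero ideal, and one can work \'etale-locally and patch. First I would observe that for any affine $R$-scheme $U$ that is smooth over $R$ (in particular $G$ itself, and more generally any $G$-torsor, which is smooth because $G$ is), the reduction map $U(R) \to U(R/I)$ is surjective: this is exactly the infinitesimal criterion for smoothness applied to the square-zero thickening $\Spec(R/I) \hookrightarrow \Spec R$. I would use this both for $G$ and for torsors. The cleanest route is to pass to \'etale cohomology: a $G$-torsor over $R/I$ is trivialized by some \'etale cover $\Spec S' \to \Spec(R/I)$; by topological invariance of the \'etale site (equivalently, since $I$ is nilpotent), this lifts to an \'etale cover $\Spec S \to \Spec R$ with $S \otimes_R R/I \cong S'$, and $S$ is itself a square-zero extension of $S'$ with kernel $IS$ (using $I^2=0$, and flatness of $S$ over $R$ so $IS \cong I \otimes_R S$ has square zero).

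Next I would set up the \v{C}ech description. A class in $\Hoh^1_\et(R/I, G)$ is represented by a cocycle $g' \in G(S' \otimes_{R/I} S')$ satisfying the cocycle condition over the triple fiber product. Because $G$ is smooth and $S \otimes_R S \to S' \otimes_{R/I} S'$ is a square-zero extension, the surjectivity of $G$ on points lifts $g'$ to an element $g \in G(S \otimes_R S)$; the obstruction to $g$ being a cocycle lies in a cohomology group with coefficients in the Lie-algebra sheaf tensored with the ideal, and one argues it vanishes — but in fact the slick version avoids this: one instead lifts the torsor directly. Namely, present the $G$-torsor $P'$ over $R/I$; since the structure map $P' \to \Spec(R/I)$ is smooth and affine, and $\Spec(R/I) \hookrightarrow \Spec R$ is a square-zero thickening, deformation theory of smooth affine schemes (Grothendieck, SGA 1) gives a smooth affine $R$-scheme $P$ lifting $P'$, and the $G$-action lifts as well by the same infinitesimal-lifting property applied to the action morphism $G \times P \to P$ and its compatibilities — each such lift exists because the relevant schemes are smooth and the obstructions/torsors live over a square-zero ideal where smoothness forces lifts to exist. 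This gives surjectivity of $\Hoh^1_\et(R, G) \to \Hoh^1_\et(R/I, G)$.

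For injectivity, I would take two $G$-torsors $P_1, P_2$ over $R$ whose reductions $\bar P_1, \bar P_2$ over $R/I$ are isomorphic, say via $\bar\psi$. An isomorphism of $G$-torsors is a section of the $G$-equivariant scheme $\mathrm{Isom}_G(P_1, P_2)$, which is a $G$-torsor over $R$ (it is a form of $G$, hence smooth affine over $R$), and $\bar\psi$ is a section of its reduction. By the infinitesimal lifting property of the smooth affine $R$-scheme $\mathrm{Isom}_G(P_1, P_2)$ across the square-zero thickening $\Spec(R/I) \hookrightarrow \Spec R$, the section $\bar\psi$ lifts to a section $\psi$, i.e.\ to an isomorphism $P_1 \cong P_2$ over $R$. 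Hence the map is injective, and combined with the previous paragraph it is bijective.

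The main obstacle I anticipate is bookkeeping rather than a deep difficulty: making precise that the deformations of the torsor, the $G$-action, and the torsor axioms all lift compatibly, and that the relevant obstruction groups vanish because $G$ is smooth (so that the cotangent complex of $G$, or rather the sheaf of K\"ahler differentials, is locally free, and $H^{\geq 1}$ of a quasi-coherent sheaf on an affine scheme vanishes). A reader-friendly shortcut that sidesteps most of this: cite the standard fact (e.g.\ SGA 3, or the nilpotent-invariance of the small \'etale site plus smoothness) that for a smooth affine group scheme $G$ over $R$ and a nilpotent ideal $I$, the functor $\Hoh^1_\et(-, G)$ is invariant under the thickening $R \to R/I$; the square-zero case $I^2 = 0$ is the essential case and the general nilpotent case follows by d\'evissage. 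I would present the argument at the level of: (i) lift the trivializing \'etale cover, (ii) lift the cocycle using smoothness of $G$, (iii) check the lifted cocycle can be adjusted to satisfy the cocycle identity using $I^2=0$ and vanishing of $\Hoh^{\geq 1}$ on the affine pieces, giving surjectivity, and (iv) the $\mathrm{Isom}$-torsor argument for injectivity.
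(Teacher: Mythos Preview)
Your approach is correct and agrees with the paper's: both use topological invariance of the \'etale site to reduce to \v{C}ech cohomology for a fixed cover, then lift cocycles using smoothness of $G$. The paper is considerably terser, citing SGA4 (Exp.~viii, Thm.~1.1) for the invariance of the \'etale site and SGA3 (Exp.~xxiv, Lem.~8.1.8) directly for the bijection on \v{C}ech $\Hoh^1$, so your hands-on obstruction and $\mathrm{Isom}$-torsor arguments are essentially an unpacking of those references.
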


 \begin{proof}
    Write $R'=R/I$.

    By \cite[Exp.~viii, Thm.~1.1]{SGAiv-two}, the base change $\Spec R'\to \Spec R$ induces
    an equivalence between the \'{e}tale site of $\Spec R$ and the
    \'etale site of $\Spec R'$. It is therefore enough
    to prove that for any \'{e}tale cover
    $\Spec S\to \Spec R$, the map $\Hoh^1_{\et}(S/R,G)\to\Hoh^1_{\et}(S'/R',G)$  is an isomorphism,
    where $S'=S\otimes_R R'$.
    By \cite[Exp.~xxiv, Lem.~8.1.8]{SGAiii}, the map $\Hoh^1_{\fpqc}(S/R,G)\to \Hoh^1_{\fpqc}(S'/R',G)$ is an isomorphism
    (here we need $G$ to be smooth), and
    since $\Spec S\to \Spec R$ is \'{e}tale and $G\to\Spec R$ is smooth, we may replace the fpqc topology by the \'{e}tale topology, see the introduction  to \cite[Exp.~xxiv]{SGAiii}.
 \end{proof}

 \begin{proposition}\label{PR:smoothness}
    When $\Omega$ is noetherian, the morphisms $\Spec Z(n,m)\to \Spec \Omega$
    and $\Spec W(n,m)\to \Spec \Omega$ are smooth.
 \end{proposition}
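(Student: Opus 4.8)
The plan is to reduce the smoothness of $\Spec Z(n,m) \to \Spec \Omega$ to a lifting criterion for smoothness, and then verify that criterion using Lemma~\ref{LM:lift-mod-nil-ideal} together with the classification of Azumaya algebras with involution by torsors. First I would note that, by Lemma~\ref{LM:noetherianity}, the ring $Z(n,m)$ is finitely generated as an $\Omega$-algebra (since $A(n,m)$ is generated over $\Omega$ by the finitely many elements $X_i$, $X_i^\tr$ and $\omega^{-1}$), so $\Spec Z(n,m) \to \Spec \Omega$ is a morphism of finite presentation when $\Omega$ is noetherian; hence it suffices to check the infinitesimal lifting criterion for smoothness. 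Concretely, given a noetherian $\Omega$-algebra $C$ and a square-zero ideal $I \subseteq C$, one must show that every $\Omega$-algebra map $Z(n,m) \to C/I$ lifts to an $\Omega$-algebra map $Z(n,m) \to C$.

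The key point is to translate such a map into data of an Azumaya algebra with involution. An $\Omega$-algebra homomorphism $Z(n,m) \to C/I$ is the same as a $C/I$-point of $\Spec Z(n,m)$, which by the universal property built into the construction corresponds to the Azumaya $C/I$-algebra with orthogonal involution $(A(n,m), \tr) \otimes_{Z(n,m)} C/I$, equipped with its $k = Nm$ marked elements $a_i$ (the images of $X_i$) satisfying $\sum_i g_n(v_i) = 1$, where $v_i = (a_{N(i-1)+1}, \dots, a_{N(i-1)+N})$. By Proposition~\ref{PR:local-isom-of-algebras-with-inv} such an object is classified by a class in $\Hoh^1_\et(C/I, \PGO_n)$, and $\PGO_n = \Og_n/\mu_2$ is a smooth affine group scheme over $\ZZ[1/2]$, hence over $\Omega$. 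By Lemma~\ref{LM:lift-mod-nil-ideal}, the map $\Hoh^1_\et(C, \PGO_n) \to \Hoh^1_\et(C/I, \PGO_n)$ is bijective, so the Azumaya $C/I$-algebra with involution underlying our point lifts uniquely to an Azumaya $C$-algebra with orthogonal involution $(E, \sigma)$ of degree $n$. One then lifts the marked elements: since $A(n,m) \to (A(n,m)\otimes C/I)$ is a surjection of $C$-modules onto a projective $C/I$-module and $E \to E\otimes C/I$ is surjective with $E$ finite projective over $C$, the elements $\bar a_i \in E\otimes C/I$ lift to elements $\tilde a_i \in E$; applying $\sigma$ gives lifts of the $a_i^\tr$ as well.

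The remaining, and expected main, obstacle is that the lifted elements $\tilde a_i$ need not satisfy $\sum_i g_n(\tilde v_i) = 1_E$ on the nose — they satisfy it only modulo $I E$. Here one uses that $g_n$ is a central polynomial: $\tilde\omega := \sum_i g_n(\tilde v_i)$ lies in $Z(E) = C$, and $\tilde\omega \equiv 1 \pmod I$, so $\tilde\omega$ is a unit in $C$ (as $I$ is contained in the Jacobson radical, or more simply since $1 + I$ consists of units by $I^2 = 0$). Exactly as in the proof of Theorem~\ref{TH:no-zero-divisors-II}, multilinearity of $g_n$ lets one absorb $\tilde\omega^{-1}$: replacing $\tilde a_{N(i-1)+1}$ by $\tilde\omega^{-1}\tilde a_{N(i-1)+1}$ for each $i$ rescales $\tilde\omega$ to $1$ without disturbing the reductions mod $I$ (since $\tilde\omega \equiv 1$) or the compatibility with $\sigma$ (as $\tilde\omega$ is central and $\sigma$-fixed, being a value of the central polynomial on a $\sigma$-symmetric configuration — or one simply notes $\sigma$ fixes $C$). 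By the universal property of $(A(n,m),\tr)$ expressed through Lemma~\ref{LM:specialization} and the construction of $A(n,m)$ as $A_0(n,Nm)[\omega^{-1}]$, this adjusted data yields an $\Omega$-algebra homomorphism $Z(n,m) \to C$ reducing to the given one mod $I$. This establishes the lifting criterion, and hence smoothness; the argument for $W(n,m) \to \Spec\Omega$ is identical, using $\PGSp_n = \Sp_n/\mu_2$ (which is smooth affine, and for which no replacement of the group as in Remark~\ref{rem:iv} is needed) in place of $\PGO_n$ and $\syp$ in place of $\tr$.
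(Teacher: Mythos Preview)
Your overall strategy matches the paper's proof exactly: use Lemma~\ref{LM:noetherianity} to reduce to formal smoothness, lift the Azumaya algebra with involution via Lemma~\ref{LM:lift-mod-nil-ideal} and the smoothness of $\PGO_n$, lift the marked elements arbitrarily, then extend to $A(n,m)$ and restrict to centers. However, there is a slip in your handling of $\omega$. In $A(n,m)\otimes_{Z(n,m)} C/I$ the image of $\omega = \sum_i g_n(v_i)$ is $\phi(\omega)$, which is a \emph{unit} in $C/I$ but not $1$ in general; your assertion that the marked elements satisfy $\sum_i g_n(v_i)=1$ is therefore unwarranted, and consequently ``$\tilde\omega \equiv 1 \pmod I$'' need not hold. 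Your rescaling by $\tilde\omega^{-1}$ then genuinely perturbs the reductions of the $\tilde a_i$ modulo $I$, so the resulting map $Z(n,m)\to C$ need not reduce to the given $\phi$.

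The fix, which is what the paper does, is simply to omit the rescaling. Since $\tilde\omega = \phi'(\omega)$ reduces modulo the nilpotent ideal $IE$ to the unit $\phi(\omega)$, it is itself a unit in $E$. Hence the involution-preserving map $\phi': A_0(n,Nm) \to E$ supplied by Lemma~\ref{LM:specialization} extends directly to the localization $A(n,m) = A_0(n,Nm)[\omega^{-1}]$ by sending $\omega^{-1}\mapsto\tilde\omega^{-1}$. This $\phi'$ is a homomorphism of degree-$n$ Azumaya algebras, so it carries $Z(n,m)$ into $Z(E)=C$; and by construction its reduction modulo $I$ sends $X_i\mapsto a_i$ and $\omega^{-1}\mapsto\phi(\omega)^{-1}$, i.e.\ it is the canonical map $A(n,m)\to A(n,m)\otimes_\phi C/I$, whence it agrees with $\phi$ on $Z(n,m)$.
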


 \begin{proof}
    We prove only  that $\Spec Z(n,m)\to \Spec \Omega$ is smooth.
    By Lemma~\ref{LM:noetherianity}, this morphism is finitely presented, and hence it is enough
    to show that $\Spec Z(n,m)\to \Spec \Omega$ is formally smooth.
    Let $S$ be a commutative $\Omega$-algebra, let $I$ be an ideal of $S$ with $I^2=0$, and let
    $\phi:Z(n,m)\to S/I$ be a homomorphism of $\Omega$-algebras. We show
    that $\phi$ can be lifted to a homomorphism $\phi':Z(n,m)\to S$.

    Let $A=A(n,m)\otimes_\phi (S/I)$ and let $\sigma$ be the involution induced by $\tr$ on $A$.  Then $(A,\sigma)$ is
    an Azumaya algebra with an orthogonal involution over $S/I$.  The smoothness of $\PGO_n\to \Spec \ZZ[\frac{1}{2}]$
    and Lemma~\ref{LM:lift-mod-nil-ideal} (see also Section~\ref{subsection:cohomology}) imply 
    that there is an Azumaya $S$-algebra with involution $(A',\sigma')$ such
    that $(A,\sigma)\cong (A'\otimes_S(S/I),\sigma'\otimes_S\id_{S/I})$.  We identify $A$ with $A'/A'I$.  For all $1\leq
    i\leq Nm$, write $a_i=\phi(X_i)$ and choose some $a'_i\in A'$ whose image in $A$ is $a_i$.  By
    Lemma~\ref{LM:specialization}, there is a morphism of $\Omega$-algebras with involution $\phi':(A_0(n,Nm),\tr)\to
    (A',\sigma')$ such that $\phi'(X_i)=a'_i$. The morphism $\phi'$ extends uniquely to $A(n,m)$ provided
    $\phi'(\omega)\in {A'}^\times$.  This holds because $\phi'(\omega)+A'I=\phi(\omega)\in A^\times$ (since
    $\phi(\omega)^{-1}=\phi(\omega^{-1})$) and $A'I$ is nilpotent. We have shown $\phi' : A(n,m) \to A'$ is a
    homomorphism of Azumaya algebras, and therefore it restricts to a homomorphism on the centers $\phi': Z(n,m) \to S$,
    which is the lift we required.
 \end{proof}

 \begin{corollary}
    There exists an affine regular $\CC$-algebra $R$ and an Azumaya $R$-algebra $A$ of degree
    $2$ without zero divisors such that $A$ has no orthogonal involutions.
 \end{corollary}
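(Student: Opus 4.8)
The plan is to run the First Construction with $\Omega=\CC$ on the split example of Example~\ref{EX:no-orthogonal-involution}, and then use that construction's specialization map to transport a hypothetical orthogonal involution back down to the split algebra, where none exists.

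In detail, I would let $(E,\sigma)$ denote the split Azumaya algebra $\End(\calP)$ of degree $2$ over the $\CC$-domain $R_0=\CC[x,y,z,s,t,u]/(xs+yt+zu-1)$ of Example~\ref{EX:no-orthogonal-involution}, equipped with the symplectic involution provided by Example~\ref{EX:deg-two-Az-algebra}. By Theorem~\ref{TH:artin-procesi} the Formanek number $m:=\mathrm{For}(E)$ is a well-defined positive integer. Since $n=2$ is a power of $2$ and $\Omega=\CC$ is an integral domain in which $2$ is invertible, Theorem~\ref{TH:no-zero-divisors-II} produces an Azumaya algebra $A:=B(2,m)$ of degree $2$ without zero divisors over $R:=W(2,m)=Z(A)$, equipped with a symplectic involution $\syp$, together with a homomorphism of algebras with involution $\phi\colon(A,\syp)\to(E,\sigma)$. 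Since $A$ and $E$ both have degree $2$, $\phi$ is a specialization: it restricts to a ring homomorphism $R\to R_0$, and $E\cong A\otimes_\phi R_0$. Because $\Omega=\CC$ is a noetherian field, Proposition~\ref{PR:smoothness} together with the remark following it shows that $R$ is regular, and Lemma~\ref{LM:noetherianity} (applied with the noetherian subring $\CC\subseteq R$) shows that $R$ is finitely generated over $\CC$, hence affine. It therefore remains only to show that $A$ has no orthogonal involution.

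Suppose, for contradiction, that $\tau\colon A\to A$ is an orthogonal involution. As an involution of an Azumaya $R$-algebra, $\tau$ is $R$-linear, so it base-changes along $\phi\colon R\to R_0$ to an involution of the first kind $\tau_{R_0}:=\tau\otimes_R\id_{R_0}$ on $E\cong A\otimes_\phi R_0$. I claim $\tau_{R_0}$ is orthogonal. For a point $x\in\Spec R_0$ with image $y\in\Spec R$ under $\Spec\phi$, the residue field $k(x)$ is a field extension of $k(y)$ and the fibre of $\tau_{R_0}$ at $x$ is $\tau_y\otimes_{k(y)}k(x)$; since the type of an involution of a central simple algebra over a field is preserved by scalar extension (the dimension distinguishing orthogonal from symplectic, recalled in Section~\ref{sec:preliminaries}, is unchanged) and $\tau_y$ is orthogonal, the fibre $(\tau_{R_0})_x$ is orthogonal. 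As $\Spec R_0$ is connected, $\tau_{R_0}$ is an orthogonal involution on $E$, contradicting Example~\ref{EX:no-orthogonal-involution}. Hence $A=B(2,m)$ has no orthogonal involution, and $(R,A)$ is the required pair.

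The only step that is not a mechanical assembly of earlier results is the claim in the last paragraph that orthogonality of an involution is preserved by the base change along $R\to R_0$, which is typically not flat. This is where some care is needed, but it dissolves once one recalls that the type of an involution over a connected scheme is its type at any single point and that this pointwise type is invariant under extension of the residue field. I expect no other obstacle.
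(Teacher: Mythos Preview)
Your proof is correct and follows exactly the approach the paper intends: construct $A=B(2,m)$ over $R=W(2,m)$ with $\Omega=\CC$ and $m=\mathrm{For}(E)$, then use the specialization of Theorem~\ref{TH:no-zero-divisors-II} to push any hypothetical orthogonal involution down to the split example, yielding a contradiction. The paper's proof is extremely terse and leaves implicit both the appeal to Proposition~\ref{PR:smoothness} for regularity and the specialization argument you spell out; your added verification that the type of an involution is preserved under base change (via the pointwise definition and invariance under residue field extension) is a legitimate and correct way to fill that gap.
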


 \begin{proof}
 Let  $m_0$ to be the Formanek number of the Azumaya algebra constructed in Example~\ref{EX:no-orthogonal-involution}.
 Take $\Omega=\CC$ and let $R=W(n,m_0)$ and $A=B(n,m_0)$.
 \end{proof}

\subsection{Second Construction}

 This construction uses versal torsors in the sense of \cite{ReichDun15}.
 It lacks the universal character of the first construction and it may have a non-regular center,
 but the Krull dimension of the center is much smaller and can be effectively estimated.

 \medskip

    We start by showing that weakly versal $\PGO_{2n}$-torsors and $\PGSp_{2n}$-torsors satisfy a slightly stronger
    version of weak versality.

    Let $K$ be a field and let $G$ be a smooth affine group scheme over $K$. Recall that a weakly versal
    $G$-torsor is a $K$-scheme $X$ together with a $G$-torsor $T\to X$ such for every field extension
    $K'/K$ with $K'$ infinite and every $G$-torsor $T'\to \Spec K'$ there exists a $K$-morphism $i:\Spec K'\to X$ such
    that $T'\cong i^*T$.
    The torsor $T\to X$ is called versal if, for every open immersion $j:U\to X$,
    the restriction $j^*T\to U$ is weakly versal.
    The minimal possible dimension of a base scheme of a versal torsor is equal to the essential
    dimension of $G$, denoted $\ed_K(G)$. These versal torsors of minimal dimension can be chosen to be
    integral and affine over $K$. See \cite{BerhFavi03}, \cite{ReichDun15} for further details and proofs.



 \begin{lemma}\label{LM:inverse-image}
 Let $L$ be a field and let $R$ be a commutative ring.
 Let $E/L$ and $A/R$ be Azumaya algebras of degree $n$.
 Suppose that there is a subring $E_0\subseteq E$ and a surjective
 ring homomorphism $\phi:E_0\to A$. Then there exist
 elements $s\in E_0\cap {L}^\times$ and $x_1,\dots,x_m\in E_0$ such that:
 \begin{enumerate}
 \item[(i)] $\phi$ extends uniquely
 to a ring homomorphism $E_0[s^{-1}]\to A$.
 \item[(ii)] Any subring $E_1\subseteq E$ containing $s^{-1}$ and
 $x_1,\dots,x_m$ is Azumaya of degree $n$ over its center.
 \end{enumerate}
 In particular, $E_0[s^{-1}]$ is an Azumaya algebra
 that specializes to $A$ via $\phi$.
 \end{lemma}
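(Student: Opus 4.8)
The plan is to verify that $E_0[s^{-1}]$, and more generally any subring $E_1$ as in (ii), satisfies condition (c) of the Artin--Procesi--Rowen theorem (Theorem~\ref{TH:artin-procesi}), which characterises Azumaya algebras of degree $n$ among all rings. Two of the three requirements of (c) come for free, and this is why the statement concerns subrings of the fixed algebra $E$: since $E$ is Azumaya of degree $n$ over the field $L$, Theorem~\ref{TH:artin-procesi} gives $\mathrm{Id}(\Mat_{n\times n}(\ZZ))\subseteq\mathrm{Id}(E)$ and that $g_n$ is a central polynomial of $E$, i.e.\ $g_n(E)\subseteq Z(E)=L$; both pass to any subring $E_1\subseteq E$, because polynomial identities are inherited and $g_n(E_1)\subseteq g_n(E)\subseteq L$ consists of elements that are central in $E$ and hence in $E_1$. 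So the only thing to engineer is that $1$ lie in the additive group spanned by $g_n(E_1)$, and $s$ together with $x_1,\dots,x_m$ will be chosen precisely to force this.

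To produce them, I would apply Theorem~\ref{TH:artin-procesi}(c) to $A$: there are vectors $v_1,\dots,v_k\in A^N$, with $N=4n^2+1$, such that $\sum_{i=1}^k g_n(v_i)=1_A$; write $v_i=(v_{i,1},\dots,v_{i,N})$. Using surjectivity of $\phi$, choose $x_{i,j}\in E_0$ with $\phi(x_{i,j})=v_{i,j}$, and let $x_1,\dots,x_m$, with $m=kN$, be a relabelling of the $x_{i,j}$. Put $\omega=\sum_{i=1}^k g_n(x_{i,1},\dots,x_{i,N})\in E_0$. Since $g_n$ has integer coefficients, $\phi(\omega)=\sum_i g_n(v_i)=1_A$; since $g_n$ is a central polynomial of $E$, $\omega\in L$; and since $\phi(\omega)=1_A\neq 0$, we get $\omega\neq 0$, so $\omega\in L^\times$. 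I would then set $s:=\omega\in E_0\cap L^\times$.

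For (i), $s$ is central in $E_0$ and $\phi(s)=1_A$ is a unit of $A$, so the universal property of the localisation of $E_0$ at $\{s^j\}_{j\ge 0}$ gives a unique ring homomorphism $E_0[s^{-1}]\to A$ restricting to $\phi$. For (ii), let $E_1\subseteq E$ be a subring with $s^{-1},x_1,\dots,x_m\in E_1$. Then $\omega=\sum_i g_n(x_{i,1},\dots,x_{i,N})\in E_1$, and since $s^{-1}$ is central and $g_n$ is multilinear,
\[
1_{E_1}=s^{-1}\omega=\sum_{i=1}^k g_n\bigl(s^{-1}x_{i,1},\,x_{i,2},\dots,x_{i,N}\bigr),
\]
an expression all of whose arguments lie in $E_1$; hence $1_{E_1}$ is in the additive span of $g_n(E_1)$, and by the first paragraph $E_1$ is Azumaya of degree $n$ over $Z(E_1)$ by Theorem~\ref{TH:artin-procesi}. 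Applying this with $E_1=E_0[s^{-1}]\subseteq E$ — which contains $s^{-1}$ and each $x_i\in E_0$ — shows $E_0[s^{-1}]$ is Azumaya of degree $n$; the map of (i) is then a ring homomorphism between Azumaya algebras of equal degree, hence a specialisation by \cite[Cor.~2.9b]{Sa99}, which gives the final assertion.

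The argument is short and I do not anticipate a serious obstacle; the step that needs the most care is keeping the $g_n$-values central on passing to a subring, which uses that $g_n$ is a central polynomial of the \emph{ambient} $E$ (whose centre is all of $L$) and not merely of $E_1$, together with the multilinearity manipulation that converts $s^{-1}\omega=1$ into a genuine Formanek witness for $1$ with all inputs inside $E_1$. This is exactly why one localises at this particular $s$ rather than at an arbitrary nonzero scalar.
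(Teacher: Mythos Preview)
Your proof is correct and follows essentially the same approach as the paper: lift Formanek witnesses from $A$ to $E_0$ via $\phi$, take $s$ to be the resulting sum of $g_n$-values (central and nonzero since it maps to $1_A$), and use multilinearity of $g_n$ to absorb $s^{-1}$ into the first argument so that $1$ lies in the additive span of $g_n(E_1)$. If anything you are more explicit than the paper, which for part~(ii) simply refers back to the analogous computation in Theorem~\ref{TH:no-zero-divisors-II}.
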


 \begin{proof}
 Let $N=4n^2+1$ denote the number of variables of the Formanek central polynomial $g_n$.
 Since $A$ is Azumaya of degree $n$, Theorem~\ref{TH:artin-procesi}
 implies that there are vectors $v_1,\dots,v_t\in A^N$
 such that $\sum_{i=1}^tg_n(v_i)=1$. Choose $u_1,\dots,u_t\in E_0^N$ such
 that $\phi_0^N(u_i)=v_i$ for all $1\leq i\leq t$.
 Write
 $m =tN$ and define $x_1,\dots,x_m$ via $u_i=(x_{(i-1)N+1},\dots,x_{(i-1)N+N})$.
 In addition, let $s=\sum_{i=1}^tg_n(u_i)$. Observe that $s\in Z(E)=L$
 because $g_n$ is a central polynomial, and $s\neq 0$ because $\phi(s)=\phi(\sum_ig_n(u_i))=\sum_ig_n(v_i)=1_{A}$.
 We claim that $s$ and $x_1,\dots,x_m$ satisfy (i) and (ii).
 Indeed, extend $\phi$ to $E_0[s^{-1}]$ by defining $\phi(s^{-1})=1_{A}$. 
 That any $E_1$ as in (ii) is Azumaya is similar to the proof of Theorem~\ref{TH:no-zero-divisors-II}.
 \end{proof}

 \begin{proposition}\label{PR:wierd-versality}
    Let $K'/K$ be a field extension with $K'$ infinite, and let $G$ be $\PGO_{n}$ or $\PGSp_{n}$.
    Let $T\to X$ be a weakly versal $G$-torsor  over $K$ such that $X$ is integral  and affine over $\Spec K$,
    and let $X'=X\times_{\Spec K}\Spec K'$ and $T'=T\times_{\Spec K}\Spec K'$.
    Let $R$ be a subring of $K'$ and let $U\to \Spec R$ be a $G$-torsor.
    Then there are $G$-torsors $T'_0\to X'_0$, $U'\to Y$ and morphisms as illustrated
    \[
    \xymatrix{
    U \ar[d] & U'\ar[d] & T'_0\ar[d] & T' \ar[d]\\
    \Spec R \ar[r]^f & Y & X'_0 \ar[l]_j \ar[r]^i & X'
    }
    \]
    such that $i$ is an open immersion, $j$ is dominant, and there are isomorphisms
    of $G$-torsors
    \[U\cong f^*U',\qquad  j^*U'\cong T'_0\cong i^*T'\ .\]
    If $R$ contains a noetherian  subring $R_0$, then $Y$ may be chosen to be of finite type
    over $\Spec R_0$ and $f$ may be chosen to be an $R_0$-morphism.
 \end{proposition}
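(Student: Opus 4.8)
The plan is to translate the statement into Azumaya algebras with involution, obtain a point of $X'$ realizing the base change of $U$ by weak versality over $K'$, and then spread that $K'$-level datum out over a common base scheme $Y$ of $\Spec R$ and an affine open of $X'$; the spreading-out is where Lemma~\ref{LM:inverse-image} does the work.

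First I would pass to algebras. By Proposition~\ref{PR:local-isom-of-algebras-with-inv} and the dictionary of Section~\ref{subsection:cohomology}, a $G$-torsor (for $G=\PGO_n$ or $\PGSp_n$) is the same as an Azumaya algebra of degree $n$ equipped with an involution of the appropriate type. Write $(A,\sigma)/R$ for the algebra with involution attached to $U$, $(E,\tau)/B$ for the one attached to $T$ (with $B=\Gamma(X,\Oscr_X)$), and $(E',\tau')=(E\otimes_K K',\tau\otimes 1)$ over $B'=B\otimes_K K'$ for $T'$. I would then observe that $T'\to X'$ is again weakly versal over $K'$: for an infinite field $K''\supseteq K'$ and a $G$-torsor over $\Spec K''$, weak versality of $T\to X$ over $K$ supplies a $K$-morphism $\Spec K''\to X$ realizing it, and combining it with the structure map $\Spec K''\to\Spec K'$ gives a $K'$-morphism $\Spec K''\to X'$ realizing it. Applying this to $U_{K'}:=U\times_{\Spec R}\Spec K'$ produces a $K'$-point $\psi\colon\Spec K'\to X'$ — i.e.\ a $K'$-algebra map $B'\to K'$ — together with an isomorphism of algebras with involution $(E'\otimes_{B'}K',\tau'\otimes 1)\cong(A\otimes_R K',\sigma\otimes 1)=:(A_{K'},\sigma_{K'})$. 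Composing gives a ring homomorphism $E'\to A_{K'}$ compatible with the involutions, while $A$ sits inside $A_{K'}$ via the flat inclusion $a\mapsto a\otimes 1$ ($A$ being torsion-free over the domain $R$).

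Next I would spread this out. Choose an affine open $X'_0=\Spec B'[g^{-1}]\hookrightarrow X'$ through which $\psi$ factors, shrinking so that it meets only the irreducible component of $X'$ carrying the image of $\psi$; this is the open immersion $i$, and $T'_0:=i^*T'$. I would look for $Y$ as a scheme receiving a dominant map $j$ from $X'_0$ and a map $f$ from $\Spec R$ — compatibly with the two maps to $\Spec K'$ coming from $\psi$ and from $R\hookrightarrow K'$ — carrying an Azumaya algebra $A'$ with involution restricting to $(E'[g^{-1}],\tau')$ along $j$ and to $(A,\sigma)$ along $f$; the corresponding torsor $U'$ then satisfies $j^*U'\cong T'_0$ and $f^*U'\cong U$, the latter because a ring homomorphism between Azumaya algebras of equal degree is automatically a specialization, i.e.\ a base change (cf.\ the proof of Theorem~\ref{TH:no-zero-divisors-II}). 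The construction of $A'$ is where Lemma~\ref{LM:inverse-image} enters: one forms a subring of $A_{K'}$ built from the image of $A$ and the image of $E'[g^{-1}]$, and the lemma — applied with $E=A_{K'}$ over $L=K'$, with $A/R$, and with $E_0$ the image of $A$ — produces finitely many elements whose presence in any subring of $A_{K'}$ forces that subring to be Azumaya of degree $n$ over its center; the relevant subring is then Azumaya of degree $n$, $\sigma_{K'}$-stable, and carries an involution of the same type as $\sigma$.

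The main obstacle is precisely the bookkeeping in this last step: arranging that the Azumaya algebra $A'$ base-changes to \emph{exactly} $E'[g^{-1}]$ on one leg and to \emph{exactly} $A$ on the other, and that $j$ is dominant and $f$ well defined. The source of the difficulty is that neither of the ring maps to $K'$ in sight — $R\hookrightarrow K'$ and $B'[g^{-1}]\to K'$ — is surjective, so a naive Milnor-type patching of the two algebras over the fibre product $R\times_{K'}B'[g^{-1}]$ is not available; the point of Lemma~\ref{LM:inverse-image}, resting on the Artin--Procesi criterion Theorem~\ref{TH:artin-procesi}(c), is exactly that it lets one enlarge the patched object to something that is again Azumaya of the correct degree. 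Finally, for the noetherian refinement I would observe that the point $\psi$, the element $g$, and the isomorphism obtained in the first paragraph involve only finitely many elements of $K'$; carrying out the construction inside a finitely generated $R_0$-subalgebra of $K'$ containing all of them makes $Y$ of finite type over $\Spec R_0$ and $f$ an $R_0$-morphism, with Lemma~\ref{LM:noetherianity} ensuring that the rings that appear are noetherian.
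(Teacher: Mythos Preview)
Your overall plan---translate to Azumaya algebras with involution, invoke weak versality to get a point of $X'$ realising $(A_{K'},\sigma_{K'})$, then spread out using Lemma~\ref{LM:inverse-image}---is the same as the paper's. However, the spreading-out step is carried out on the wrong side, and as written it cannot produce the maps $f$ and $j$.

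Concretely: you propose to build the intermediate algebra $A'$ as a \emph{subring of $A_{K'}$} generated by the image of $A$ and the image of $E'[g^{-1}]$, invoking Lemma~\ref{LM:inverse-image} with $E=A_{K'}$, $L=K'$, $E_0=A$ and $\phi=\id_A$. But the map $E'[g^{-1}]\to A_{K'}$ is surjective (it is the base change along the $K'$-algebra surjection $B'[g^{-1}]\to K'$), so any subring of $A_{K'}$ containing its image is $A_{K'}$ itself. Your $Y$ is then $\Spec K'$, and the required map $f:\Spec R\to Y$ would demand a ring map $K'\to R$, which does not exist unless $R=K'$. Even setting this aside, once $A\subseteq A'$ you have $R\subseteq Z(A')$, which gives a map $Y\to\Spec R$ rather than $f:\Spec R\to Y$; and there is no natural map $Z(A')\to B'[g^{-1}]$ either, only the specialisation $B'[g^{-1}]\to K'\supseteq Z(A')$ going the other way. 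Your own ``main obstacle'' paragraph identifies the symptom, but the cause is structural, not bookkeeping.

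The paper resolves this by working inside the versal algebra rather than inside $A_{K'}$. After reducing to $K'=K$, with $(B,\tau)$ the algebra over $\Gamma(X,\calO_X)$ corresponding to $T\to X$, the specialisation at the versal point gives a \emph{surjection} $\phi:B\to A_K$. One sets $E_0=\phi^{-1}(A)\subseteq B$, which is $\tau$-stable, and applies Lemma~\ref{LM:inverse-image} with $E$ the generic fibre of $B$, $L$ the function field of $X$, and this $E_0$ surjecting onto $A$ via $\phi$. The lemma produces $s\in E_0\cap L^\times$ and $x_1,\dots,x_m\in E_0$; one then takes $E_1=R_0[s^{-1},x_1,\dots,x_m,\tau(x_1),\dots,\tau(x_m)]$. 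This $E_1$ is Azumaya of degree $n$, $\tau$-stable, and now the maps go the right way: the inclusion $Z(E_1)\hookrightarrow Z(B)[s^{-1}]$ gives the dominant $j:X'_0\to Y$, and the restriction of $\phi$ gives $Z(E_1)\to R$, hence $f:\Spec R\to Y$. The finite-type claim over $R_0$ is immediate from Lemma~\ref{LM:noetherianity}, since $E_1$ is finitely generated as an $R_0$-algebra by construction; no descent to a finitely generated subring of $K'$ is needed. In short, the missing idea is to take the \emph{preimage} of $A$ in the versal algebra and carve out a small Azumaya subring there, rather than to take images inside $A_{K'}$.
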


 \begin{proof}
    We treat only  the case $G=\PGO_{n}$. The other case is similar.
    If $R_0$ is not specified, take it to be the image of  $\ZZ[\frac{1}{2}]$ in $R$.
    Since weak versality is preserved under base change, we may replace $T\to X$ by
    $T'\to X'$ and assume that $K'=K$.

    Let $(B,\tau)$ be the involutary Azumaya algebra corresponding to $T\to X$
    and let $(A,\sigma)$ be the involutary Azumaya algebra  corresponding to $U\to \Spec R$.
    Since $X$ is weakly versal, there is a $K$-section $g: \Spec K\to X$
    specializing $(B,\tau)$ to $(A_{K},\sigma_{K}):=(A\otimes_R {K},\sigma\otimes_R\id_{K})$.
    Denote the induced ring homomorphism by $\phi:B\to A_{K}$. The map $\phi$ is onto
    because it is $K$-linear and any epimorphic image of $B$ is an Azumaya algebra of degree $n$.
    View $A$ as a subring of $A_K$
    and let $E_0=\phi^{-1}(A)$. Since $\phi$ is a homomorphism of rings with involution,
    we have $\tau(E_0)\subseteq E_0$.
    Let $\xi$ be the generic point of $X$ and let $E/L$ be the Azumaya algebra induced by
    the $\PGO_{n}$-torsor $T_{\xi}$. The involution $\tau:B\to B$ extends to an involution on $E$.
    We now apply Lemma~\ref{LM:inverse-image}
    to get elements $x_1,\dots,x_m\in E_0$ and $s\in Z(E_0)\cap L^\times$.

    Let $E_1=R_0[s^{-1},x_1,\dots,x_m,\tau(x_1),\dots,\tau(x_m)]$.
    Then $E_1$ is Azumaya of degree $n$ over its center, and $\phi$ induces a specialization
    of Azumaya algebras from $E_1$ to $A$. This specialization is compatible with the involutions $\tau$
    and $\sigma$, hence $(E_1,\tau|_{E_1})$ specializes to $(A,\sigma)$.
    We therefore take $Y=\Spec Z(E_1)$ and $X'_0=\Spec Z(B)[s^{-1}]$, and let $U$ and $T'_0$
    be the $\PGO_{n}$-torsors corresponding to
    $(E_1,\tau)$ and $(B[s^{-1}],\tau)$, respectively.
    The morphism $i$ corresponds to the inclusion $Z(B)\subseteq Z(B)[s^{-1}]$,
    the morphism $j$ corresponds to the inclusion $Z(E_1)\subseteq Z(E_0)[s^{-1}]\subseteq Z(B)[s^{-1}]$,
    and
    the morphism $f$ corresponds to $\phi:Z(E_1)\to Z(A)=R$.
    When $R_0$ is noetherian, $Z(E_1)$ is affine over $R_0$ by Lemma~\ref{LM:noetherianity}.
 \end{proof}

    Proposition~\ref{PR:wierd-versality} also
    holds for $\PGL_n$-torsors; the proof is similar. We do not know whether the proposition holds
    for torsors of general smooth affine group schemes.

 \medskip

    We now use Proposition~\ref{PR:wierd-versality} to show that any Azumaya algebra with involution
    $(A,\sigma)$ over a domain $R$ is a specialization of an involutary Azumaya algebra without zero divisors.

 \begin{lemma}\label{LM:no-zero-divisors}
        Let $K$ be a field and let $T\to X$ be a versal $\PGO_{n}$-torsor (resp.\ $\PGSp_{n}$-torsor).
        Suppose that $X$ is integral and affine, $X=\Spec R$,
        and let $(A,\sigma)$ denote the Azumaya algebra with involution corresponding to $T \to X$.
        If $n$
        is a power of $2$, then $A$ has no zero divisors.
    \end{lemma}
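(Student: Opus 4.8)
The plan is to reduce the statement to showing that the generic fibre of $T\to X$ is a division algebra, and then to prove this by a spreading-out argument whose essential input is the existence of a division algebra of degree $n$ and period $2$ over a rational function field. We may assume $n\ge 2$, the case $n=1$ being trivial since then $R$ is itself a domain. Write $L$ for the fraction field of the domain $R$ and put $E=A\otimes_R L$. Since $A$ is locally free over $R$ it is torsion-free, so the natural map $A\to E$ is injective, and it suffices to prove that $E$ is a division ring. Now $E$ is a central simple $L$-algebra of degree $n=2^d$ (inheriting an involution of the first kind from $\sigma$), and $\ind E$ divides $\deg E=2^d$, so $\ind E=2^e$ for some $0\le e\le d$; the goal is to show $e=d$. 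Suppose toward a contradiction that $e<d$.

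The next step is to bound the index of the closed fibres of $A$ over a dense open of $X$. Because $n$ is invertible in $K$, the division algebra underlying $E$ has a separable maximal subfield, so $E$ is split by a finite separable extension $L'/L$ with $[L':L]=2^e$. A standard spreading-out argument then produces a dense open $U\subseteq X$ together with a finite \'etale morphism $\pi\colon V\to U$ of degree $2^e$ and function field $L'$ over which the Brauer class of $A\otimes_R\mathcal O_V$ is trivial: this class vanishes at the generic point of $V$, hence its nonvanishing locus is a proper closed subset of $V$ whose image in $X$ has dimension less than $\dim X$ and can be deleted from $X$. For every $x\in U$ and every $v\in V$ over $x$ the field $\kappa(v)$ then splits $A\otimes_R\kappa(x)$, while $[\kappa(v):\kappa(x)]\le 2^e<n$; hence $\ind(A\otimes_R\kappa(x))<n$, so $A\otimes_R\kappa(x)$ is \emph{not} a division algebra, for every $x\in U$.

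The final step is to contradict this using versality. For any $d\ge 1$ there is an infinite field $F$ containing $K$ over which there is a central division $F$-algebra $D$ of degree $2^d=n$ and period dividing $2$; for instance one may take $F=K(t_1,\dots,t_{2d})$ and $D=(t_1,t_2)_F\otimes_F\cdots\otimes_F(t_{2d-1},t_{2d})_F$, a classical example. Equip each quaternion factor with an involution of the first kind, choosing the types so that the type of $\tau=\tau_1\otimes\cdots\otimes\tau_d$ (the product of the factor types in $\ZZ/2$) is orthogonal or symplectic as the case requires; then $(D,\tau)$ corresponds to a $\PGO_n$-torsor (resp.\ $\PGSp_n$-torsor) over $\Spec F$. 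Since $T\to X$ is versal, its restriction $T|_U\to U$ is weakly versal, so there is a $K$-morphism $f\colon\Spec F\to U$ with $f^*(T|_U)$ isomorphic to the torsor of $(D,\tau)$. If $x\in U$ is the image point of $f$, then $\kappa(x)$ embeds into $F$ and $(A\otimes_R\kappa(x))\otimes_{\kappa(x)}F\cong D$; since the index of a central simple algebra cannot increase under an extension of the base field, $\ind(A\otimes_R\kappa(x))\ge\ind D=n$, contradicting the previous paragraph. Hence $e=d$, the algebra $E$ is a division ring, and $A$ has no zero divisors.

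The delicate point is the spreading-out in the second step: one must convert an abstract separable splitting field of the generic fibre into a genuine finite \'etale cover over a dense open over which the Brauer class of $A$ dies, keeping track of the non-split locus by a dimension count. The other ingredients are classical — the existence and period of the iterated quaternion division algebra over $F$, the monotonicity of the index under scalar extension, and the multiplicativity of the type of a tensor product of involutions of the first kind. The hypothesis that $n$ is a power of $2$ enters both in the analysis of $\ind E$ (where it makes $\ind E$ automatically a power of $2$) and, more essentially, in producing the degree-$n$, period-$2$ division algebra over $F$.
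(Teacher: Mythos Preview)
Your proof is correct, but it is considerably more elaborate than the argument in the paper. Both proofs rest on the same two ingredients: the existence, over some infinite extension of $K$, of a division algebra of degree $n$ carrying an involution of the required type, and the use of versality to specialize an open piece of $(A,\sigma)$ to that division algebra. The difference lies in how the contradiction is extracted. The paper works directly with hypothetical zero divisors $a,b\in A$: the locus where both are nonzero is an open $U\subseteq X$, and if $U\neq\varnothing$ one specializes $(A|_U,\sigma)$ to the division algebra $(D,\tau)$; the images of $a$ and $b$ are then nonzero elements with product zero, contradicting that $D$ is a division ring. No passage to the generic fibre, no spreading out of splitting fields, and no index bookkeeping are needed.

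Your route, by contrast, first reduces to the generic fibre, then spreads out a separable splitting field of degree $\ind E$ to a finite \'etale cover over a dense open, bounds the index of all closed fibres there, and finally contradicts this bound via versality and the monotonicity of the index under scalar extension. Each step is sound, and your explicit construction of $(D,\tau)$ as an iterated quaternion algebra with involutions of prescribed factor types is a nice concrete alternative to the paper's citation of Rowen. What the paper's approach buys is brevity and the avoidance of the spreading-out argument; what yours buys is a more structural statement along the way (uniform index bounds over a dense open), at the cost of more machinery.
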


    \begin{proof}
        It is well-known that there exists a field extension $K'/K$ such that $K'$ is infinite
        and a central simple $K'$-algebra
        with an orthogonal (resp.\ symplectic) involution $(D,\tau)$ of degree $n$ such that $D$ is a division ring;
        see  \cite[Thm.~29]{Ro75}, for instance.

        Suppose that $a,b\in A$ satisfy $ab=0$. View $A$ as a sheaf of algebras over $X$ and let $Z_a$ and $Z_b$ be the
        vanishing loci of $a$ and $b$, respectively. Let $U=X\setminus (Z_a\cup Z_b)$ and assume for the sake of
        contradiction that $U$ is nonempty. Shrinking $U$ if necessary, we may assume $U=\Spec R[r^{-1}]$ 
        for some $0\neq r\in R$.  By the versality of $T\to X$, the algebra $(A[r^{-1}],\sigma)$ specializes
        to $(D,\tau)$ over $K$, and by the construction of $U$, the images of $a$ and $b$ in $D$ are nonzero. This means
        $D$ has zero divisors, a contradiction. Consequently $U$ must be empty and since $X$ is irreducible, either
        $Z_a=X$ or $Z_b=X$. This implies that either $a=0$ or $b=0$, since $X$ is reduced.
    \end{proof}

 \begin{theorem}\label{TH:no-zero-divisors}
 Let $(A,\sigma)$ be an Azumaya algebra of degree $2^n$ with an orthogonal
 (resp.\ symplectic) involution  over a domain $R$.
 Then there exists an Azumaya algebra of degree $2^n$ with an orthogonal (resp.\ symplectic) involution
 $(B,\tau)$ such that $B$ has no zero divisors and $(B,\tau)$ specializes to $(A,\sigma)$.
 If $R$ is an affine algebra over a field $k$, then $(B,\tau)$ can be chosen such that
 $Z(B)$ is affine over $k$ and
 \[\dim Z(B)\leq \dim R+\ed_K(G)\]
 where $G=\PGO_{2^n}$  (resp.\ $G=\PGSp_{2^n}$) and $K$ is the fraction field of $R$.
 \end{theorem}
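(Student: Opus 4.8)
The plan is to produce $(B,\tau)$ as the Azumaya algebra with involution carried by a suitable cut-down of a versal $G$-torsor, using Proposition~\ref{PR:wierd-versality} to control the specialization and Lemma~\ref{LM:no-zero-divisors} to rule out zero divisors. Set $G=\PGO_{2^n}$ (resp.\ $G=\PGSp_{2^n}$) and let $K$ be the fraction field of $R$; we may assume $K$ is infinite. Choose a versal $G$-torsor $T\to X$ over $K$ with $X$ integral, affine, and of dimension $\ed_K(G)$ --- such torsors exist by the discussion preceding Proposition~\ref{PR:wierd-versality}. The pair $(A,\sigma)$ corresponds to a $G$-torsor $U\to\Spec R$, and $R$ sits inside $K=:K'$, so Proposition~\ref{PR:wierd-versality} applies (taking $R_0=k$ when $R$ is an affine $k$-algebra, and $R_0$ the image of $\ZZ[\tfrac{1}{2}]$ in $R$ otherwise). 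It yields $G$-torsors $T'_0\to X'_0$ and $U'\to Y$ together with morphisms $f\colon\Spec R\to Y$, a dominant $j\colon X'_0\to Y$ and an open immersion $i\colon X'_0\to X$ with $U\cong f^*U'$ and $j^*U'\cong T'_0\cong i^*T$. Let $(B,\tau)$ be the Azumaya algebra of degree $2^n$ with orthogonal (resp.\ symplectic) involution over $Z(B):=\Gamma(Y,\calO_Y)$ classified by $U'\to Y$.

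Next I would verify the two properties of $(B,\tau)$. The isomorphism $U\cong f^*U'$ unwinds to an isomorphism $(A,\sigma)\iso(B\otimes_{Z(B)}R,\,\tau\otimes\id)$, so that $(B,\tau)$ specializes to $(A,\sigma)$ along $f$. For the absence of zero divisors I would use the construction inside the proof of Proposition~\ref{PR:wierd-versality}: there $B$ is exhibited as a subring $E_1=R_0[s^{-1},x_1,\dots,x_m,\tau(x_1),\dots,\tau(x_m)]$ of the central simple algebra $E$ over the function field $L$ of $X$ obtained from the Azumaya algebra corresponding to $T\to X$ by passing to the generic point. That Azumaya algebra has no zero divisors by Lemma~\ref{LM:no-zero-divisors} --- here we use that $2^n$ is a power of $2$ --- and $E$ is a central localization of it, hence also has no zero divisors; being finite dimensional over $L$, the algebra $E$ is a division ring, so its subring $B=E_1$ has no zero divisors.

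Finally, suppose $R$ is an affine $k$-algebra, so that we take $R_0=k$. Then $E_1=k[s^{-1},x_i,\tau(x_i)]$ is a finitely generated $k$-algebra, whence $Z(B)$ is an affine $k$-algebra by Lemma~\ref{LM:noetherianity}; it is a domain because $E_1\subseteq E$. The dominant morphism $j$ identifies $Z(B)$ with a subring of $\Gamma(X'_0,\calO_{X'_0})$, which is a localization of the coordinate ring of the integral $K$-variety $X$, so $\operatorname{Frac}Z(B)$ embeds into $L$ and
\[
\dim Z(B)=\trdeg_k\operatorname{Frac}Z(B)\le\trdeg_k L=\trdeg_k K+\trdeg_K L=\dim R+\ed_K(G),
\]
using $\trdeg_k K=\dim R$ and $\trdeg_K L=\dim X=\ed_K(G)$.

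The delicate point is the no-zero-divisors assertion: it forces the argument to remember that the algebra handed back by Proposition~\ref{PR:wierd-versality} is literally a subalgebra of the division algebra governing the generic fibre of a versal torsor, and the step from ``no zero divisors in the global sections over $X$'' to ``$E$ is a division ring'' is precisely where the power-of-$2$ hypothesis enters, via Lemma~\ref{LM:no-zero-divisors} and, behind it, the existence of $2$-power-degree division algebras admitting an involution of each kind.
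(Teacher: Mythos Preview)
Your proof is correct and follows essentially the same architecture as the paper's: choose a versal $G$-torsor over $K$ of minimal dimension, apply Proposition~\ref{PR:wierd-versality} with $K'=K$, and take $(B,\tau)$ to be the algebra on $Y$. The only cosmetic difference is in the no-zero-divisors step: the paper argues directly from the \emph{statement} of Proposition~\ref{PR:wierd-versality} (since $i$ is an open immersion and $j$ is dominant, $B$ embeds in a central localization of the Azumaya algebra over $X$, which has no zero divisors by Lemma~\ref{LM:no-zero-divisors}), whereas you reach inside its proof to exhibit $B=E_1$ explicitly as a subring of the generic-fibre division algebra---both are valid, but the paper's version is slightly more modular.
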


 \begin{proof}
    We prove only the orthogonal case.

    Let $U\to \Spec R$ be the torsor corresponding to $(A,\sigma)$.
    Choose a versal $\PGO_{2^n}$-torsor $T\to X$ over $K$ such that $X$ is an affine, integral $K$-scheme
    and $\dim X=\ed_K(\PGO_{2^n})$. We now apply Proposition~\ref{PR:wierd-versality} with $K'=K$
    (and $R_0=k$, if necessary)
    to obtain $U'\to Y$, $T'\to X'_0$, $f$, $i$, $j$ as in the proposition.

    Let $(B,\tau)$ and $(E,\theta)$ be the Azumaya algebras with involution corresponding to $U'\to Y$,
    and $T\to X$,  respectively.
    By Lemma~\ref{LM:no-zero-divisors}, $E$ has no zero divisors. Since $i$ is an open immersion and $j$ is dominant,
    this means   $B$ has no zero divisors.
    Finally, if $R$ is affine over $k$,
    then $\dim Z(B)=\trdeg_kK(Y)\leq \trdeg_k K(X)=\trdeg_k K+\trdeg_K K(X)=\dim R+\ed_K(\PGO_{2^n})$.
 \end{proof}

 We note that Theorem~\ref{TH:no-zero-divisors}
 gives an Azumaya algebra whose center is \textit{a priori} not regular.





 \begin{corollary}
 There exists an Azumaya algebra $B/S$ of degree $2$ without zero divisors
 that admits only symplectic involutions. The ring $S$ can be taken to be an affine
 $\CC$-algebra with $\dim S\leq 7$.
 \end{corollary}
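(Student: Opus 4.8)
The plan is to feed the split example of Section~4 into Theorem~\ref{TH:no-zero-divisors}. Let $R=\CC[x,y,z,s,t,u]/(xs+yt+zu-1)$, let $\calP$ be the indecomposable rank~$2$ projective $R$-module of Example~\ref{EX:no-orthogonal-involution}, and equip $A=\End(\calP)$ with a symplectic involution $\sigma$, which exists by Example~\ref{EX:deg-two-Az-algebra}. Since $\deg A=2$ and $Z(A)=R$ is a domain, Theorem~\ref{TH:no-zero-divisors} in the symplectic case produces an Azumaya algebra $(B,\tau)$ of degree~$2$ with a symplectic involution such that $B$ has no zero divisors, $S:=Z(B)$ is affine over $\CC$, $(B,\tau)$ specializes to $(A,\sigma)$, and
\[
\dim S\le \dim R+\ed_K(\PGSp_2),
\]
where $K$ is the fraction field of $R$.

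First I would bound the right-hand side. The ring $R$ is an irreducible hypersurface in $\mathbb{A}^6_\CC$ (equivalently, $\Spec R$ is an affine-space bundle of relative dimension~$2$ over $\mathbb{A}^3\setminus\{0\}$), so $\dim R=5$. Since $\Sp_2=\SL_2$, we have $\PGSp_2\cong\PGL_2$, whose essential dimension over any field with $2$ invertible equals~$2$; hence $\dim S\le 5+2=7$.

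Finally I would rule out orthogonal involutions on $B$. As $(B,\tau)$ specializes to $(A,\sigma)$, there is a ring homomorphism $\phi\colon B\to A$ with $\phi(S)\subseteq R$ and $B\otimes_\phi R\cong A$. If $\tau'$ were an orthogonal involution on $B$, then $\tau'\otimes\id_R$ would be an involution of the first kind on $A\cong B\otimes_\phi R$, and it would be orthogonal because $\Spec R$ is connected and the type of an involution is preserved under base change; this contradicts Example~\ref{EX:no-orthogonal-involution}. Hence every involution of $B$ is symplectic, while $\tau$ furnishes one, and taking $S=Z(B)$ completes the proof. I do not expect a serious obstacle here; the only points needing care are the identification $\PGSp_2\cong\PGL_2$ and the observation that an orthogonal involution on $B$ base-changes to an orthogonal involution on the specialization $A$.
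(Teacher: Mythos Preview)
Your proof is correct and follows essentially the same approach as the paper, which simply cites Example~\ref{EX:no-orthogonal-involution}, Theorem~\ref{TH:no-zero-divisors}, and the bound $\ed_{\CC}(\PGSp_2)\le 2$. You have merely spelled out the details the paper leaves implicit: the computation $\dim R=5$, the identification $\PGSp_2\cong\PGL_2$ giving the essential dimension bound, and the observation that an orthogonal involution on $B$ would base-change along the specialization to an orthogonal involution on $A$, contradicting the choice of $A$.
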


 \begin{proof}
 This follows from Example~\ref{EX:no-orthogonal-involution} and Theorem~\ref{TH:no-zero-divisors} since
 $\ed_{\CC}(\PGSp_2)\leq2$.
 \end{proof}




\begin{bibdiv}
\begin{biblist}*{labels={numeric}}

\bib{adams1976}{article}{
      author={Adams, J.~F.},
      author={Mahmud, Z.},
       title={Maps between classifying spaces},
        date={1976-12},
        ISSN={0020-9910, 1432-1297},
     journal={Inventiones Mathematicae},
      volume={35},
      number={1},
       pages={1\ndash 41},
}

\bib{Al61StructureOfAlgs}{book}{
      author={Albert, A.~Adrian},
       title={Structure of algebras},
      series={Revised printing. American Mathematical Society Colloquium
  Publications, Vol. XXIV},
   publisher={American Mathematical Society},
     address={Providence, R.I.},
        date={1961},
}

\bib{antieau2014}{article}{
      author={Antieau, Benjamin},
      author={Williams, Ben},
       title={The period-index problem for twisted topological {K}-theory},
        date={2014},
     journal={Geometry \& Topology},
      volume={18},
       pages={1115{\textendash}1148},
         url={http://arxiv.org/abs/1104.4654},
}

\bib{antieau2014-a}{article}{
      author={Antieau, Benjamin},
      author={Williams, Ben},
       title={Unramified division algebras do not always contain {A}zumaya
  maximal orders},
        date={2014-07},
        ISSN={0020-9910, 1432-1297},
     journal={Inventiones mathematicae},
      volume={197},
      number={1},
       pages={47\ndash 56},
         url={http://link.springer.com/article/10.1007/s00222-013-0479-7},
}

\bib{auel:clifford}{unpublished}{
      author={Auel, Asher},
       title={Clifford invariants of line bundle-valued quadratic forms},
        date={2011},
        note={MPIM preprint series 2011-33},
}

\bib{auel:surjectivity}{article}{
      author={Auel, Asher},
       title={Surjectivity of the total Clifford invariant and Brauer dimension},
        date={2015},
     journal={Journal of Algebra},
      volume={443},
       pages={395\ndash 421},
}

\bib{ABB:quadrics}{article}{
      author={Auel, Asher},
      author={Bernardara, Marcello},
      author={Bolognesi, Michele},
       title={Fibrations in complete intersections of quadrics, Clifford algebras, derived categories, and rationality problems},
        date={2014},
     journal={Journal de math{\'e}matiques pures et appliqu{\'e}es},
      volume={102},
      number={1},
       pages={249\ndash 291},
}

\bib{BerhFavi03}{article}{
      author={Berhuy, Gr{\'e}gory},
      author={Favi, Giordano},
       title={Essential dimension: a functorial point of view (after {A}.
  {M}erkurjev)},
        date={2003},
        ISSN={1431-0635},
     journal={Doc. Math.},
      volume={8},
       pages={279\ndash 330 (electronic)},
}

\bib{bichsel_knus:values_line_bundles}{article}{
      author={Bichsel, W.},
      author={Knus, M.-A.},
       title={Quadratic forms with values in line bundles},
        date={1994},
     journal={Contemp.\ Math.},
      volume={155},
       pages={293\ndash 306},
}

\bib{bichsel:thesis}{thesis}{
      author={Bichsel, Walter},
       title={Quadratische {R\"a}ume mit {W}erten in invertierbaren {M}oduln},
        type={Ph.D. Thesis},
        date={1985},
}

\bib{borel1953}{article}{
      author={Borel, Armand},
       title={Sur la cohomologie des espaces fibres principaux et des espaces
  homogenes de groupes de {L}ie compacts},
        date={1953-01},
        ISSN={{0003-486X}},
     journal={Annals of Mathematics},
      volume={57},
      number={1},
       pages={115\ndash 207},
         url={http://www.jstor.org.ezproxy.library.ubc.ca/stable/1969728},
}

\bib{brown1982}{article}{
      author={Brown, Edgar~H.},
       title={The cohomology of {$BSO_n$} and {$BO_n$} with integer
  coefficients},
        date={1982-06},
        ISSN={0002-9939},
     journal={Proceedings of the American Mathematical Society},
      volume={85},
      number={2},
       pages={283\ndash 288},
         url={http://www.jstor.org/stable/2044298},
}

\bib{cartan1954}{article}{
      author={Cartan, H.},
       title={D\'{e}termination des alg\`{e}bres {$H_*(K(\pi,n);\mathbf{Z})$}},
        date={1954},
     journal={S\'{e}minaire H. Cartan},
      volume={7},
      number={1},
       pages={11\ndash 01{\textendash}11\ndash 24},
}

\bib{DeMeyIngr71SeparableAlgebras}{book}{
      author={DeMeyer, Frank},
      author={Ingraham, Edward},
       title={Separable algebras over commutative rings},
      series={Lecture Notes in Mathematics, Vol. 181},
   publisher={Springer-Verlag},
     address={Berlin},
        date={1971},
}

\bib{ReichDun15}{article}{
      author={Duncan, Alexander},
      author={Reichstein, Zinovy},
       title={Versality of algebraic group actions and rational points on
  twisted varieties},
        date={2015},
        ISSN={1056-3911},
     journal={J. Algebraic Geom.},
      volume={24},
      number={3},
       pages={499\ndash 530},
        note={With an appendix containing a letter from J.-P. Serre},
}

\bib{Fi14}{article}{
      author={First, Uriya~A.},
       title={General bilinear forms},
        date={2015},
        ISSN={0021-2172},
     journal={Israel J. Math.},
      volume={205},
      number={1},
       pages={145\ndash 183},
  url={http://dx.doi.org.ezproxy.library.ubc.ca/10.1007/s11856-014-1135-7},
}

\bib{Fi15}{article}{
      author={First, Uriya~A.},
       title={Rings that are {M}orita equivalent to their opposites},
        date={2015},
        ISSN={0021-8693},
     journal={J. Algebra},
      volume={430},
       pages={26\ndash 61},
  url={http://dx.doi.org.ezproxy.library.ubc.ca/10.1016/j.jalgebra.2015.01.026},
}

\bib{fulton1984}{book}{
      author={Fulton, William},
       title={Intersection theory},
   publisher={Springer Verlag},
        date={1984},
}

\bib{giraud1971}{book}{
      author={Giraud, J.},
       title={Cohomologie non ab\'elienne},
      series={Die Grundlehren der mathematischen {W}issenschaften in
  {E}inzeldarstellungen mit besonderer {B}er\"ucksichtigung der
  {A}nwendungsgebiete},
   publisher={{Springer-Verlag}},
        date={1971},
      volume={179.},
}

\bib{goresky-macpherson}{book}{
      author={Goresky, Mark},
      author={MacPherson, Robert},
       title={Stratified {M}orse theory},
      series={Ergebnisse der Mathematik und ihrer Grenzgebiete (3)},
   publisher={Springer-Verlag},
        date={1988},
      volume={14},
        ISBN={3-540-17300-5},
}

\bib{grothendieck1968-a}{incollection}{
      author={Grothendieck, Alexander},
       title={Le groupe de Brauer. I. Alg\`{e}bres {d'Azumaya} et
  interpr\'{e}tations diverses},
        date={1968},
   booktitle={Dix expos\'{e}s sur la cohomologie des sch\'{e}mas},
   publisher={{North-Holland}},
     address={Amsterdam},
       pages={46{\textendash}66},
}

\bib{hatcher}{book}{
      author={Hatcher, Allen},
       title={Algebraic topology},
   publisher={Cambridge University Press},
     address={Cambridge},
        date={2010},
        ISBN={9780521791601 {052179160X} 9780521795401 0521795400},
}

\bib{husemoller1994}{book}{
      author={Husem{\"o}ller, Dale},
       title={Fibre bundles},
     edition={3rd ed},
      series={Graduate texts in mathematics},
   publisher={{Springer-Verlag}},
     address={New York},
        date={1994},
      number={20},
        ISBN={0387940871 3540940871},
}

\bib{iwasawa1949}{article}{
      author={Iwasawa, Kenkichi},
       title={On some types of topological groups},
        date={1949-07},
        ISSN={{0003-486X}},
     journal={Annals of Mathematics},
      volume={50},
      number={3},
       pages={507\ndash 558},
         url={http://www.jstor.org.ezproxy.library.ubc.ca/stable/1969548},
}

\bib{jackowski1992}{article}{
      author={Jackowski, Stefan},
      author={{McClure}, James},
      author={Oliver, Bob},
       title={Homotopy classification of self-maps of {$BG$} via {$G$-actions.}
  {I}},
        date={1992},
        ISSN={{0003-486X}},
     journal={Annals of Mathematics. Second Series},
      volume={135},
      number={1},
       pages={183{\textendash}226},
         url={http://www.ams.org/mathscinet-getitem?mr=1147962},
}

\bib{jouanolou1973}{incollection}{
      author={Jouanolou, J.~P.},
       title={Une suite exacte de {Mayer-Vietoris} en {K-Theorie} algebrique},
        date={1973-01},
   booktitle={Higher {K-Theories}},
      editor={Bass, H.},
      series={Lecture Notes in Mathematics},
   publisher={Springer Berlin Heidelberg},
       pages={293\ndash 316},
         url={http://link.springer.com/chapter/10.1007/BFb0067063},
}

\bib{KnParSri90}{article}{
      author={Knus, M.-A.},
      author={Parimala, R.},
      author={Srinivas, V.},
       title={Azumaya algebras with involutions},
        date={1990},
        ISSN={0021-8693},
     journal={J. Algebra},
      volume={130},
      number={1},
       pages={65\ndash 82},
  url={http://dx.doi.org.proxy1.athensams.net/10.1016/0021-8693(90)90100-3},
}

\bib{Kn91}{book}{
      author={Knus, Max-Albert},
       title={Quadratic and {H}ermitian forms over rings},
      series={Grundlehren der Mathematischen Wissenschaften [Fundamental
  Principles of Mathematical Sciences]},
   publisher={Springer-Verlag},
     address={Berlin},
        date={1991},
      volume={294},
        ISBN={3-540-52117-8},
        note={With a foreword by I. Bertuccioni},
}

\bib{InvBook}{book}{
      author={Knus, Max-Albert},
      author={Merkurjev, Alexander},
      author={Rost, Markus},
      author={Tignol, Jean-Pierre},
       title={The book of involutions},
      series={American Mathematical Society Colloquium Publications},
   publisher={American Mathematical Society},
     address={Providence, RI},
        date={1998},
      volume={44},
        ISBN={0-8218-0904-0},
        note={With a preface in French by J. Tits},
}

\bib{may1975}{article}{
      author={May, J.~Peter},
       title={Classifying spaces and fibrations},
        date={1975},
        ISSN={0065-9266},
     journal={Memoirs of the American Mathematical Society},
      volume={1},
      number={1, 155},
       pages={xiii+98},
}

\bib{mccleary2001}{book}{
      author={{McCleary}, John},
       title={A user's guide to spectral sequences},
      series={Cambridge Studies in Advanced Mathematics},
   publisher={Cambridge University Press},
        date={2001},
      number={58},
}

\bib{Milne80EtaleCohomology}{book}{
      author={Milne, James~S.},
       title={\'{E}tale cohomology},
      series={Princeton Mathematical Series},
   publisher={Princeton University Press, Princeton, N.J.},
        date={1980},
      volume={33},
        ISBN={0-691-08238-3},
}

\bib{MontSmall86}{article}{
      author={Montgomery, S.},
      author={Small, L.~W.},
       title={Some remarks on affine rings},
        date={1986},
        ISSN={0002-9939},
     journal={Proc. Amer. Math. Soc.},
      volume={98},
      number={4},
       pages={537\ndash 544},
         url={http://dx.doi.org/10.2307/2045720},
}

\bib{parimala_srinivas:brauer_group_involution}{article}{
      author={Parimala, R.},
      author={Srinivas, V.},
       title={Analogues of the {B}rauer group for algebras with involution},
        date={1992},
     journal={Duke Math. J.},
      volume={66},
      number={2},
       pages={207\ndash 237},
}

\bib{raynaud1968}{article}{
      author={Raynaud, Mich\`{e}le},
       title={Modules projectifs universels},
        date={1968},
        ISSN={0020-9910},
     journal={Inventiones Mathematicae},
      volume={6},
       pages={1{\textendash}26},
}

\bib{Rowen88B}{book}{
      author={Rowen, Louis~H.},
       title={Ring theory. {V}ol. {II}},
      series={Pure and Applied Mathematics},
   publisher={Academic Press Inc.},
     address={Boston, MA},
        date={1988},
      volume={128},
        ISBN={0-12-599842-2},
}

\bib{Rowen74}{article}{
      author={Rowen, Louis~Halle},
       title={On rings with central polynomials},
        date={1974},
        ISSN={0021-8693},
     journal={J. Algebra},
      volume={31},
       pages={393\ndash 426},
}

\bib{Ro75}{article}{
      author={Rowen, Louis~Halle},
       title={Identities in algebras with involution},
        date={1975},
        ISSN={0021-2172},
     journal={Israel J. Math.},
      volume={20},
       pages={70\ndash 95},
}

\bib{Sa78}{article}{
      author={Saltman, David~J.},
       title={Azumaya algebras with involution},
        date={1978},
        ISSN={0021-8693},
     journal={J. Algebra},
      volume={52},
      number={2},
       pages={526\ndash 539},
}

\bib{Sa99}{book}{
      author={Saltman, David~J.},
       title={Lectures on division algebras},
      series={CBMS Regional Conference Series in Mathematics},
   publisher={Published by American Mathematical Society, Providence, RI},
        date={1999},
      volume={94},
        ISBN={0-8218-0979-2},
}

\bib{scharlau:involutions}{article}{
      author={Scharlau, Winfried},
       title={Zur {E}xistenz von {I}nvolutionen auf einfachen {A}lgebren},
        date={1975},
        ISSN={0025-5874},
     journal={Math. Z.},
      volume={145},
      number={1},
       pages={29\ndash 32},
}

\bib{totaro1999}{incollection}{
      author={Totaro, Burt},
       title={The {C}how ring of a classifying space},
        date={1999},
   booktitle={Algebraic {$K$-theory} {(Seattle}, {WA}, 1997)},
      series={Proc. Sympos. Pure Math.},
      volume={67},
   publisher={Amer. Math. Soc.},
     address={Providence, {RI}},
       pages={249{\textendash}281},
}

\bib{whitehead2012}{book}{
      author={Whitehead, George~W.},
       title={Elements of homotopy theory},
      series={Graduate Texts in Mathematics Ser.},
   publisher={Springer},
        date={2012},
      volume={61},
        ISBN={9781461263203},
}

\end{biblist}
\begin{biblist}*{labels={shortalphabetic}}

\bib{SGAiii}{book}{
       title={Sch\'emas en groupes. {III}: {S}tructure des sch\'emas en groupes
  r\'eductifs},
      series={S\'eminaire de G\'eom\'etrie Alg\'ebrique du Bois Marie 1962/64
  (SGA 3). Dirig\'e par M. Demazure et A. Grothendieck. Lecture Notes in
  Mathematics, Vol. 153},
   publisher={Springer-Verlag, Berlin-New York},
        date={1970},
      label={SGA3},
}

\bib{SGAiv-two}{book}{
       title={Th\'eorie des topos et cohomologie \'etale des sch\'emas. {T}ome
  2},
      series={Lecture Notes in Mathematics, Vol. 270},
   publisher={Springer-Verlag, Berlin-New York},
        date={1972},
        note={S{\'e}minaire de G{\'e}om{\'e}trie Alg{\'e}brique du Bois-Marie
  1963--1964 (SGA 4), Dirig{\'e} par M. Artin, A. Grothendieck et J. L.
  Verdier. Avec la collaboration de N. Bourbaki, P. Deligne et B. Saint-Donat},
      label={SGA4},
}

\end{biblist}
\end{bibdiv}

\end{document}